\newtheoremstyle{dtheorem}{3 mm}{1 mm}{\itshape}{}{\bfseries}{.}{ }
  {\thmnumber{(#2) }\thmname{#1}\thmnote{ \mdseries(#3)\bfseries}}
\newtheoremstyle{ddef}{3 mm}{1 mm}{\normalfont}{}{\bfseries}{.}{ }
  {\thmnumber{(#2) }\thmname{#1}\thmnote{ \mdseries(#3)\bfseries}}
\newtheoremstyle{dremark}{3 mm}{1 mm}{\normalfont}{}{\itshape}{}{ }
  {\thmnumber{\upshape\bfseries(#2) }\itshape\mdseries\thmname{#1}.\thmnote{\;\mdseries #3 ---}}
\theoremstyle{dtheorem}
\newtheorem{thm}{Theorem}[section]
\newtheorem*{nnthm}{Theorem}
\newtheorem{prop}[thm]{Proposition}
\newtheorem{lemma}[thm]{Lemma}
\newtheorem{cor}[thm]{Corollary}
\theoremstyle{definition}
\newtheorem{defn}{Definition}[section]
\theoremstyle{remark}
\newtheorem{rem}[thm]{Remark}
\newtheorem*{conj}{Conjecture}
\numberwithin{equation}{section}
\newcommand{\thmref}[1]{Theorem~\ref{#1}}
\newcommand{\defref}[1]{Definition~\ref{#1}}
\newcommand{\proref}[1]{Proposition~\ref{#1}}
\newcommand{\lemref}[1]{Lemma~\ref{#1}}
\newcommand{\corref}[1]{Corollary~\ref{#1}}
\newcommand{\sheaf}[1]{\mathscr{#1}}
\newcommand{\cat}[1]{\mathcal{#1}}
\newcommand{\mdl}[1]{\mathcal{#1}}
\newcommand{\ideal}[1]{\mathfrak{#1}}
\DeclareMathOperator{\Pic}{Pic}
\DeclareMathOperator{\Alb}{Alb}
\DeclareMathOperator{\Aut}{Aut}
\DeclareMathOperator{\Spec}{Spec}
\DeclareMathOperator{\Spf}{Spf}
\DeclareMathOperator{\Id}{Id}
\DeclareMathOperator{\pr}{pr}
\DeclareMathOperator{\Lie}{Lie}
\DeclareMathOperator{\Def}{\cat D\mathit{ef}}
\DeclareMathOperator{\Fib}{\cat F\mathit{ib}}
\DeclareMathOperator{\Jac}{\cat J\mathit{ac}}
\DeclareMathOperator{\GL}{GL}
\DeclareMathOperator{\Sym}{Sym}
\DeclareMathOperator{\Proj}{Proj}
\renewcommand{\Im}{\text{Im}}
\renewcommand{\O}{\sheaf{O}}
\DeclareMathOperator{\Hom}{Hom}
\DeclareMathOperator{\ord}{ord}
\DeclareMathOperator{\tr}{tr}
\DeclareMathOperator{\Alg}{\cat Alg}
\newcommand{\Z}{\mathbb{Z}}
\newcommand{\Q}{\mathbb{Q}}
\newcommand{\F}{\mathbb{F}}
\newcommand{\G}{\mathbb{G}}
\renewcommand{\P}{\mathbb{P}}
\newcommand{\MGamma}[1]{\mdl{M}_{\Gamma({#1})}}
\newcommand{\Zm}[1]{\Z/{#1}\Z}
\title[Deformations of elliptic fibre bundles]{Deformations of elliptic fibre bundles in positive characteristic}
\author[H. Partsch]{Holger Partsch}
\address[H. Partsch]{Mathematisches Institut\\
Heinrich-Heine-Universit\"at\\
D-40225 D\"usseldorf\\
Deutschland\\
Fax: ++49(0)211-81-15233}
\email{partsch@math.uni-duesseldorf.de}
\thanks{The author was supported by the Sonderforschungsbereich/Transregio 45
``Periods, moduli spaces and arithmetic of algebraic varieties''}
\keywords{elliptic surfaces, liftability}
\dedicatory{\today}
\begin{document}

\subjclass[2000]{14J27, 14D15}
\maketitle

\begin{abstract}
 We study the deformation theory of elliptic fibre bundles over curves in positive characteristics.
 As applications, we give examples of non-liftable elliptic surfaces in characteristic two and three, which answers a question of Katsura and Ueno. Also, we construct a class of elliptic fibrations, whose liftability is equivalent to a conjecture of F. Oort concerning the liftability of automorphisms of curves.  Finally, we classify deformations of bielliptic surfaces.
\end{abstract}

\bibliographystyle{pgalpha}

%
%
%
%
%
%
%
%
%
%

\section{Introduction}
In their seminal paper on elliptic surfaces in characteristic $p$ \cite{KU}, Katsura and Ueno asked if every elliptic surface of Kodaira dimension one, over a field of positive characteristic, can be lifted to characteristic zero.
This question is of interest, because the known examples of non-liftable surfaces are either quasi-elliptic or of general type.
On the other hand, every surface of Kodaira dimension less than or equal to zero is know to be liftable.
In this article, we give the first examples of elliptic Kodaira dimension one surfaces, which are non-liftable.

%
%
%

Recall that every surface of Kodaira dimension one has a unique elliptic or quasi-elliptic fibration. A fibration is called quasi-elliptic if the generic fibre is a cuspidal curve of arithmetic genus one.
%

Our approach to construct non-liftable elliptic surfaces works as follows:
We classify all possible deformations of a given surface, and a posteriori conclude that there are only deformations over Artinian rings of characteristic $p$. In particular, this excludes liftability to characterstic zero.

To make this work, we have to choose a class of surfaces, with a sufficiently easy deformation theory, and being on the other hand rich enough to provide the examples we are looking for. As it turns out, the right objects are elliptic fibre bundles over curves.

Let us fix a field $k$ and a curve $C$ over $k$.
By an \emph{elliptic fibre bundle} over $C$, we understand an elliptic fibration $X \to C$ which is locally trivial in the \'etale topology (Definition \ref{defn-efbundle}). 

An elliptic fibre bundle is called \emph{Jacobian} if it has a section. Examples of Jacobian elliptic bundles can be constructed as follows:
Let $E$ be an elliptic curve over $k$, and let $\Gamma$ be a finite group acting on $E$ and fixing the zero section. 
Given a curve $C/k$ with a free $\Gamma$ action, we can form
\begin{equation}\label{Jacobian-exp} X = (E \times C)/\Gamma, \end{equation}
where the action on the product is diagonal. The quotient $X$ has a smooth Jacobian elliptic fibration $X \to C/\Gamma$.
%

We will classify deformations of elliptic fibre bundles over curves in several steps.  First, we study the Jacobian ones and prove that their deformations are always of the form (\ref{Jacobian-exp}).

Next, we study the relation between Jacobian and the non-Jacobian bundles. For an arbitrary elliptic fibre bundle $f \colon X \to C$ denote by $\Fib_{X/C}$ the functor of deformations of $X$ extending its fibration structure.
Associated to $X$ there is a Jacobian bundle over the same base, which we denote by $J$. Let $\Jac_{J/C}$ be the subfunctor of $\Fib_{J/C}$ of deformations with section. For precise definitions of these functors see \defref{functors}.

There is a natural map $\Fib_{X/C} \to \Jac_{J/C}$ given by taking the identity component of the relative Picard scheme. 

\begin{nnthm}[\ref{def-functor-mapping}]
 The map of deformation functors $\Fib_{X/C} \to \Jac_{J/C}$ is formally smooth and it holds
\[\dim (\Fib_{X/C}(k[\epsilon])) = \dim (\Jac_{X/C}(k[\epsilon])) + h^1(C, \Lie(J/C)). \]
\end{nnthm}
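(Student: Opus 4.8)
The plan is to exploit the fact that an elliptic fibre bundle $f\colon X\to C$, being \'etale locally trivial, is a torsor under its Jacobian $J=\Pic^0_{X/C}\to C$, and to read off the fibre of the map $\Fib_{X/C}\to\Jac_{J/C}$ as the deformations of $X$ as a $J$-torsor with $J$ held fixed. The upshot I expect is a short exact sequence of tangent spaces
\[ 0\to H^1(C,\Lie(J/C))\to \Fib_{X/C}(k[\epsilon]) \to \Jac_{J/C}(k[\epsilon])\to 0, \]
together with vanishing of the relevant obstruction group, which simultaneously yields formal smoothness and the dimension count. (I read the target of the map and the $\Jac$ appearing in the formula as the same functor, $\Jac_{J/C}$, attached to the Jacobian $J$ of $X$.)

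First I would set up the torsor deformation theory. Choosing an \'etale trivialising cover of $X\to C$, I describe $X$ by a $1$-cocycle with values in the sheaf of sections of $J$. Linearising the group law of $J$ along the zero section $e$ replaces, to first order, this sheaf by the line bundle $\Lie(J/C)=e^*T_{J/C}$ on $C$. In this way a first-order deformation of the torsor structure with $J$ fixed becomes a class in $H^1(C,\Lie(J/C))$, a re-trivialisation being a $0$-cochain; and the obstruction to lifting a torsor over a small extension $A'\twoheadrightarrow A$ with kernel $I$ lands in $H^2(C,\Lie(J/C)\otimes_k I)$.

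The key point is then dimension: since $C$ is a curve, $H^2(C,-)$ vanishes on quasi-coherent sheaves, so the obstruction is automatically zero. This gives formal smoothness: given a deformation $\mathcal X_A$ of $X$ over $A$ and a compatible deformation $\mathcal J_{A'}$ of $J$ over $A'$ lifting $\Pic^0_{\mathcal X_A/\mathcal C_A}$, I can always lift $\mathcal X_A$ to a torsor $\mathcal X_{A'}$ under $\mathcal J_{A'}$. Here I must check that the map $\Fib_{X/C}\to\Jac_{J/C}$ really is ``pass to $\Pic^0$ and forget the torsor'', so that its fibre over a fixed deformation of $J$ is precisely the set of liftings of $X$ as a $J$-torsor; this identification, together with the compatibility of $\Pic^0$ with base change in the smooth proper genus-one setting, is what makes the torsor picture legitimate.

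Finally the tangent-space statement: formal smoothness gives surjectivity of $\Fib_{X/C}(k[\epsilon])\to\Jac_{J/C}(k[\epsilon])$, and the analysis above identifies the kernel --- the deformations of $X$ inducing the trivial deformation of $J$ and of $C$ --- with $H^1(C,\Lie(J/C))$ (taking $I=(\epsilon)\cong k$). The dimension formula follows. The main obstacle I anticipate is not the vanishing of $H^2$, which is immediate, but the careful linearisation of the torsor cocycle: one must justify that replacing $J$ by $\Lie(J/C)$ is legitimate to first order and that the resulting identification of the fibre with $H^1(C,\Lie(J/C))$ is canonical and compatible with the obstruction theory across all small extensions, not merely over the dual numbers.
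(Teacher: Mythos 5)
Your overall strategy coincides with the paper's own: the paper likewise regards $\mdl X$ as a torsor under its Jacobian $\mdl J=\Pic^0_{\mdl X/\mdl C}$ (via the canonical isomorphism $\Pic^1_{\mdl X/\mdl C}\simeq \mdl X$), and the ``linearisation of the cocycle'' that worries you is carried out there, following \cite[Remarque 9.1.9]{SGA3.2}, by the exact sequence of group functors $0\to i_*\sheaf L\to \mdl J\to i_*\mdl J_0\to 0$ with $\sheaf L=\Lie(\mdl J_0/\mdl C_0)\otimes I$, whose long exact sequence in \'etale cohomology produces canonically, and uniformly over all small extensions, both the obstruction space $H^2(\mdl C_0,\Lie(\mdl J_0/\mdl C_0))\otimes I$ (zero because the base is a curve and the sheaf is coherent, so \'etale and Zariski cohomology agree) and the kernel $H^1(C,\Lie(J/C))\otimes I$ of the tangent map; the injectivity you need for the kernel identification comes from the vanishing of the $H^0$-term, i.e., from the fact that every section $C\to J$ lifts to the trivial deformation. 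So that part of your plan is sound and is essentially the paper's argument.

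The genuine gap is elsewhere: representability. Your construction lifts the class of $\mdl X_0$ to a class in the \'etale cohomology group $H^1(\mdl C,\mdl J)$, i.e., to a torsor in the category of sheaves, but an element of $\Fib_{X/C}(\Lambda)$ is an actual scheme, flat over $\Lambda$, equipped with a morphism to $\mdl C$ reducing to $f$; gluing a scheme out of a torsor cocycle under an abelian scheme (effectivity of descent) is not automatic. The paper closes this hole by invoking Raynaud's theorem \cite[Lemme XIII]{Raynaud119}, which guarantees that a \emph{torsion} class in $H^1(\mdl C,\mdl J)$ is representable, and then verifies torsion-ness by induction on the length of $\Lambda$: the successive graded pieces $H^1(\mdl C_0,\Lie(\mdl J_0/\mdl C_0))\otimes I$ are modules over $\Lambda$, which is annihilated by a power of $p$, and the bottom group $H^1(C,J)$ is torsion. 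Without this step (or some equivalent algebraization of the sheaf torsor), your lifted ``torsor'' might a priori fail to be a scheme and hence fail to define an element of $\Fib_{X/C}(\Lambda)$, so the proof of formal smoothness is incomplete at exactly this point; the tangent-space computation, by contrast, needs no representability input beyond the first-order case and goes through as you describe.
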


This resembles situation of elliptic fibrations over a field: The Jacobian fibrations can be dealt with explicitly and the non-Jacobian ones are described by a cohomological theory, based on the group structure of the former.

Next, we answer the question whether, given an elliptic bundle $X \to C$, every deformation admits an extension of the fibration structure:
\begin{nnthm}[\ref{kodaira-one}]
 If $X$ is of Kodaira dimension one, then the unique elliptic fibration extends to every deformation.
\end{nnthm}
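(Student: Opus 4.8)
The plan is to exploit the fact that, on a surface of Kodaira dimension one, the elliptic fibration is \emph{canonical}, so that it is forced to deform together with the surface. Since $X$ is an elliptic fibre bundle, all fibres are smooth genus-one curves, so the relative dualizing sheaf is a pullback, $\omega_{X/C}=f^{*}L$ with $L=(R^{1}f_{*}\O_{X})^{\vee}$, and hence $\omega_{X}=f^{*}M$ with $M=L\otimes\omega_{C}$. The hypothesis $\kappa(X)=1$ is equivalent to $\deg M>0$, and for $m$ large and divisible $|mM|$ is very ample on $C$; thus the $m$-canonical map of $X$ factors as
\[ X \xrightarrow{\ f\ } C \hookrightarrow \P^{N}, \]
and $f$ is recovered from $X$ as the Iitaka fibration, i.e.\ as the Stein factorisation of $\phi_{|mK_{X}|}$. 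In particular $f$ is an intrinsic invariant of $X$, and the whole problem becomes one of showing that this intrinsic datum survives an infinitesimal deformation.

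Now let $\mathcal{X}\to\Spec A$ be a deformation over an Artinian local $k$-algebra with structure map $p$. Since $X$ is smooth and $A$ is Artinian, the total space is smooth over $A$ and $\omega_{\mathcal{X}/A}$ is a line bundle restricting to $\omega_{X}$. I would build the relative Iitaka fibration by forming the graded $\O_{A}$-algebra $\bigoplus_{m\ge 0}p_{*}\omega_{\mathcal{X}/A}^{\otimes m}$ and setting $\mathcal{C}:=\Proj_{A}$ of it, together with the natural rational map $\mathcal{X}\dashrightarrow\mathcal{C}$. The content to establish is that each $p_{*}\omega_{\mathcal{X}/A}^{\otimes m}$ is locally free and compatible with base change to the closed point, so that $\mathcal{C}$ is flat over $A$ with central fibre $C$, the rational map is the morphism extending $f$, and $\mathcal{C}\to\Spec A$ is a smooth curve deforming $C$. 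For the base-change analysis one first notes $R^{2}p_{*}\omega_{\mathcal{X}/A}^{\otimes m}=0$ for $m\ge 2$: top degree cohomology always commutes with base change, and by Serre duality $H^{2}(X,\omega_{X}^{\otimes m})\cong H^{0}(C,M^{1-m})^{\vee}=0$. Feeding this into Grothendieck's criterion, the base-change map $\phi^{1}(k)$ is an isomorphism, and freeness of $p_{*}\omega_{\mathcal{X}/A}^{\otimes m}$ together with the isomorphism $p_{*}\omega_{\mathcal{X}/A}^{\otimes m}\otimes k\cong H^{0}(X,\omega_{X}^{\otimes m})$ becomes \emph{equivalent} to the local freeness of $R^{1}p_{*}\omega_{\mathcal{X}/A}^{\otimes m}$ over $A$.

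Granting this freeness, the relative $m$-canonical map is defined on all of $\mathcal{X}$, its image is a curve flat over $A$, and Stein factorisation produces the morphism $g\colon\mathcal{X}\to\mathcal{C}$ extending $f$; hence the deformation lies in the essential image of $\Fib_{X/C}$ and the theorem follows. The main obstacle is precisely the step flagged above: the local freeness of $R^{1}p_{*}\omega_{\mathcal{X}/A}^{\otimes m}$, equivalently the deformation invariance of the plurigenera in positive characteristic. Unlike over $\C$ one cannot invoke a general invariance theorem, and because $H^{1}(X,\omega_{X}^{\otimes m})\neq 0$ the relevant higher direct image does not vanish, so freeness cannot be read off from a vanishing statement and must instead be extracted from the explicit description $\omega_{X}=f^{*}M$ and the constancy of $\chi(\omega_{X}^{\otimes m})$ in the flat family. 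Verifying this invariance, and checking that the relative canonical image is genuinely one-dimensional and fibred in genus-one curves, is where the real work lies.
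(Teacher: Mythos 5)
Your reduction of the theorem to the deformation invariance of plurigenera contains a genuine gap, and it is exactly the step you flag yourself: the local freeness of $p_*\omega_{\mdl X/A}^{\otimes m}$, equivalently the liftability of pluricanonical sections through small extensions. Over an Artinian local base this is a real obstruction problem, not a semicontinuity problem: the obstruction to extending a section of $\omega_X^{\otimes m}$ over a square-zero extension with ideal $I$ lives in $H^1(X,\omega_X^{\otimes m})\otimes I$, and this group is genuinely nonzero here --- from $\omega_X\simeq f^*M$ and the Leray sequence one gets $h^1(X,\omega_X^{\otimes m})\ge h^0(C, M^{\otimes m}\otimes \sheaf L)>0$ for large $m$ (where $\sheaf L=R^1f_*\O_X$), as you noted. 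Your proposed repair tools cannot close this: constancy of $\chi(\omega^{\otimes m})$ holds automatically in any flat family and carries no information about the individual $h^i$ over an Artinian base (there is only one fibre, so there is nothing to semicontinue against), and the explicit shape $\omega_X=f^*M$ describes the special fibre only --- using it to control $\mdl X$ presupposes a fibration on $\mdl X$, which is what you are trying to construct. Indeed, invariance of plurigenera for these surfaces is a \emph{consequence} of the theorem, not an available input, so the direct route is circular as it stands.

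The paper's proof splits the statement and resolves the existence half by a mechanism you do not have. Uniqueness is close to your picture: \proref{pro-fibration-is-canoical} shows that any fibration $\mdl X\to\mdl C$ on a deformation is pluricanonical, the key point being that $\Omega^1_{\mdl X/\mdl C}$ is torsion in $\Pic(\mdl X)$, proved by induction on the length of $\Lambda$ using that the tangent space $H^1(X,\O_X)$ of the Picard functor is $p$-torsion --- note this argument \emph{assumes} the fibration and derives the pluricanonical sections, the reverse of your direction. For existence (\proref{kodaira-one-prop}), the paper passes to the \'etale Galois cover $X'=X\times_C C'$ with group $G$ on which the Jacobian trivializes, $J'\simeq E\times_k C'$, and lifts the cover uniquely to $\mdl X'\to\mdl X$. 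There the deformation theory is completely computable: $\Fib_{X'/C'}$ is unobstructed, since $\Fib\to\Jac$ is formally smooth (\thmref{def-functor-mapping}) and Jacobian deformations of a product are unobstructed (\corref{cor-obstructions-Jacobian-case}), and its tangent dimension $(3g'-3)+1+g'=4g'-2$ matches the bound $h^1(X',\Theta_{X'})\le 4g'-2$ of \lemref{invariants-kodaira-one}; combined with the injectivity above this forces $\Fib_{X'/C'}\simeq\Def_{X'}$, so $\mdl X'$ is fibred. Finally, because that fibration is canonical it is $G$-equivariant, and it descends to the desired fibration $\mdl X\to\mdl C$. So the missing idea in your proposal is precisely this detour: prove existence on a cover where a tangent-space count against a smooth functor settles everything, then use the canonicity you correctly identified --- but as a descent tool rather than as a construction tool.
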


Now we can address the liftability question: In section \ref{non-liftable-elliptic-surfaces} we will construct Jacobian elliptic fibre bundles over fields of characteristic two and three, which cannot be lifted as Jacobian bundles. By the above theorems, this is enough to show non-liftabilty.

\begin{nnthm}[\ref{non-lift}]
 There exist elliptic fibre bundles in characteristic two and three, that do not lift to characteristic zero.
\end{nnthm}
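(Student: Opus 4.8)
The plan is to derive \thmref{non-lift} from the two structural theorems above, reducing everything to the construction of a single non-liftable \emph{Jacobian} elliptic bundle for each $p \in \{2,3\}$. Suppose $f\colon X \to C$ is a Jacobian elliptic fibre bundle of Kodaira dimension one, and let $\mathcal X$ be a lift of $X$ over a complete discrete valuation ring $R$ with residue field $k$ of characteristic $p$ and fraction field of characteristic zero. By \thmref{kodaira-one} the elliptic fibration extends over $R$, so $\mathcal X$ determines a point of $\Fib_{X/C}(R)$; applying the natural map of \thmref{def-functor-mapping} and using that $J \cong X$ because $X$ is Jacobian, we obtain a point of $\Jac_{X/C}(R)$, that is, a lift of $X$ as a Jacobian bundle. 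It therefore suffices to produce an $X$ for which $\Jac_{X/C}$ has no point over any such $R$; for then $X$ itself admits no lift.

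For the construction I would let $E$ be the supersingular elliptic curve with $j$-invariant $0$ over $\overline{\F}_p$. Its decisive feature in small characteristic is that $\Aut(E,0)$ is \emph{non-abelian}: it has order $24$ and is isomorphic to $\mathrm{SL}_2(\F_3)$ when $p=2$, and order $12$, isomorphic to $\Z/3 \rtimes \Z/4$, when $p=3$; by contrast, over any field of characteristic zero the automorphism group of an elliptic curve fixing the origin is cyclic of order $2$, $4$ or $6$. I would fix a non-abelian subgroup $\Gamma \le \Aut(E,0)$, for instance the quaternion group $Q_8$ when $p=2$ and the symmetric group $S_3$ when $p=3$, acting faithfully on $E$ and fixing the origin, together with a smooth projective curve $\widetilde C$ carrying a free $\Gamma$-action whose quotient $C = \widetilde C/\Gamma$ has genus at least two. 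Then $X = (E \times \widetilde C)/\Gamma$ as in \eqref{Jacobian-exp} is a Jacobian elliptic fibre bundle $f\colon X \to C$ with $K_X = f^*(K_C \otimes L)$ for a torsion line bundle $L$, so that $g(C) \ge 2$ forces $\kappa(X) = \kappa(C) = 1$, as required.

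To see that $X$ has no Jacobian lift to characteristic zero, I would invoke the classification of deformations of Jacobian bundles: every point of $\Jac_{X/C}(R)$ is again of the product–quotient shape $(\mathcal E \times \widetilde{\mathcal C})/\Gamma$, with the \emph{same} group $\Gamma$ acting on a lift $\mathcal E$ of $E$ and fixing the zero section. Restricting this action to the generic fibre $\mathcal E_\eta$ gives a faithful action of $\Gamma$ on a characteristic-zero elliptic curve fixing its origin — an element acting trivially on $\mathcal E_\eta$ would act trivially on the schematically dense $\mathcal E_\eta \subset \mathcal E$, hence on the special fibre $E$, where $\Gamma$ acts faithfully — so $\Gamma$ embeds into the cyclic group $\Aut(\mathcal E_\eta,0)$. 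This is impossible for a non-abelian $\Gamma$, so no Jacobian lift exists and \thmref{non-lift} follows from the reduction above.

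The main difficulty is twofold. First, one must secure the free $\Gamma$-cover $\widetilde C \to C$ with $g(C) \ge 2$ in characteristic $p$; since $\Gamma$ is solvable this amounts to realizing $\Gamma$ as the Galois group of a connected étale cover of a suitable base curve, but the presence of a wild factor of order $p$ inside $\Gamma$ must be handled with care. Second, and more substantially, one has to bridge the gap between the classification of deformation functors over Artinian rings and the existence of an honest lift over $R$: this requires applying the classification at each truncation $R/\mathfrak m^n$, algebraizing the resulting compatible system — both the elliptic curve $\mathcal E$ and the $\Gamma$-action on it — by Grothendieck existence, and only then specializing to the generic fibre. Carrying out this algebraization and checking that it produces a genuine faithful $\Gamma$-action on $\mathcal E_\eta$ is where the essential work is concentrated.
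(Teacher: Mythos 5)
Your overall architecture is exactly the paper's: reduce to the Jacobian case via $\Pic^0$ (the map of \thmref{def-functor-mapping}, resting on \proref{deforming-elliptic-torsors}) together with \thmref{kodaira-one}, then kill Jacobian lifts of a product--quotient $(E\times \widetilde C)/\Gamma$ with $E$ the $j=0$ curve, using an \'etale Galois cover of a genus $\ge 2$ base. But there is one concrete error in your construction: for $p=3$ the group $\Aut(E,0)\simeq \Z/3\Z\rtimes\Z/4\Z$ is the dicyclic group of order $12$, which has a \emph{unique} involution (the inversion), hence contains no copy of $S_3$; indeed its only non-abelian subgroup is the whole group. So your proposed $\Gamma=S_3$ does not exist inside $\Aut(E,0)$, and you must instead take $\Gamma$ to be the full group $\Z/3\Z\rtimes\Z/4\Z$ --- which is precisely what the paper does (and for which the Pacheco--Stevenson criterion, normal $3$-Sylow with abelian quotient $\Z/4\Z$, still applies). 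For $p=2$ your choice $Q_8\le \mathrm{SL}_2(\F_3)$ is fine and matches the paper's $J_Q$.

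The other divergence is the final obstruction mechanism, where your route is viable but substantially heavier than the paper's. The paper works entirely over Artinian rings: \lemref{non-lift-autos} is an elementary Weierstra\ss-equation computation showing that an elliptic curve over $\Lambda\in\Alg_W$ with $|\Aut(\mdl E)|>6$ forces $3\Lambda=0$ (similarly for $2$), so via \corref{cor-obstructions-Jacobian-case} the obstruction already appears at $W_2(k)$ and no algebraization is ever needed --- formal non-liftability falls out immediately. Your argument instead passes to a characteristic-zero generic fibre, which requires Grothendieck existence to algebraize both $\mathcal E$ and the $\Gamma$-action (you correctly flag this, and it is doable since elliptic curves are canonically polarized and a finite group action is finitely many morphisms). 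Beware, though, of a step you gloss: your faithfulness argument assumes the lifted $\Gamma$-action on $\mathcal E$ reduces to the original faithful $\rho_0$ on $E$. The classification (\proref{prop-global-structure}) a priori only produces \emph{some} homomorphism $\rho\colon\Gamma\to\Aut(\mathcal E)$; identifying its reduction with $\rho_0$ is the content of the harder direction of \corref{cor-obstructions-Jacobian-case}, proved in the paper via rigidity of the action on $n$-torsion. With the group corrected and these two steps (algebraization, identification of the reduced action) carried out, your proof closes; the paper's Artinian computation simply buys the same conclusion with far less machinery.
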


Another result is that proving liftability for a certain class of isotrivial elliptic fibrations would imply the Oort conjecture: Given a curve $C$ of higher genus, and a cyclic subgroup $G$ of $\Aut(C)$, then the pair $(C, G)$ can be lifted.
Our result is:

\begin{nnthm}[\ref{thm-oort-conj}]
 Choose an elliptic curve $E$ and let $G$ act on $E$ by translation.
Then the quotient $X =  (E \times C)/G$ is liftable if and only if the pair $(C, G)$ is liftable.
\end{nnthm}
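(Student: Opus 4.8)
The plan is to prove the two implications separately, using throughout that $X$ has Kodaira dimension one so that \thmref{kodaira-one} applies. Writing $B = C/G$ and $\pi\colon C\to B$ for the associated étale $G$-cover, one has $K_X = f^*K_B$ for the fibration $f\colon X\to B$ (since $K_E=0$ and $\pi$ is unramified); as $g(C)\ge 2$ and $\pi$ is étale, Riemann--Hurwitz $2g(C)-2 = n(2g(B)-2)$ with $n=|G|$ forces $g(B)\ge 2$, whence $\kappa(X)=\kappa(B)=1$. Two structural remarks organize the argument. First, since translations act trivially on $\Pic^0(E)\cong E$, the Jacobian bundle associated to $X$ is the trivial product $J=E\times B$, and $X$ is exactly the $E$-torsor over $B$ obtained from $\pi\colon C\to B$ by pushout along the translation homomorphism $t\colon G\to E$, whose image lies in $E[n]$. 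Second, liftability of $(C,G)$ means the existence, over a complete discrete valuation ring $R$ with residue field $k$ and fraction field of characteristic zero, of a smooth proper $\mathcal C/R$ with a $G$-action reducing to $(C,G)$.

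For the implication $(C,G)$ liftable $\Rightarrow X$ liftable, suppose such a lift $(\mathcal C,G)$ over $R$ is given. I would first lift the pair $(E,t)$, i.e. produce $\mathcal E/R$ together with $\tilde t\colon G\to\mathcal E(R)$ reducing to $t$. Decomposing $t(G)=T_{p'}\times T_p$, the prime-to-$p$ part lifts uniquely along any lift of $E$ because $\mathcal E[m]$ is finite étale for $p\nmid m$; the $p$-part is nontrivial only when $E$ is ordinary (a supersingular $E$ has no $k$-rational $p$-torsion, so then $p\nmid n$), and over the Serre--Tate canonical lift the étale quotient of $\mathcal E[p^\infty]$ supplies the required point. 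Having fixed $(\mathcal E,\tilde t)$, I would form $\mathcal X=(\mathcal E\times_R\mathcal C)/G$ for the diagonal action; freeness of $G$ on $\mathcal C$ (inherited from $C$) makes the quotient map finite étale, so $\mathcal X$ is smooth and projective over $R$ and manifestly reduces to $X$.

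For the converse, suppose $X$ lifts to a proper flat $\mathcal X/R$. By \thmref{kodaira-one} the fibration extends to $f_R\colon\mathcal X\to\mathcal B$ with $\mathcal B/R$ lifting $B$. Since $\mathcal X\to\Spec R$ is proper and the special fibre of $f_R$ is smooth, the non-smooth locus of $f_R$ is closed with image in $\Spec R$ missing the closed point, hence empty; so $f_R$ is a smooth genus-one fibration, and its $j$-invariant lies in $R$ and reduces to the constant $j(E)$, so $f_R$ is isotrivial. I would then recover the cover as the minimal finite cover $\mathcal C\to\mathcal B$ over which $\mathcal X$ acquires a section, equivalently over which the $\mathcal E$-torsor $\mathcal X$ trivializes. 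For an isotrivial smooth genus-one fibration this trivializing cover is finite étale and Galois; the key point is that its formation commutes with the base change $R\to k$, so that $\mathcal C\to\mathcal B$ reduces to $\pi$ and carries a $G$-action lifting the original. This yields the desired lift $(\mathcal C,G)$.

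The main obstacle is this last recovery step: one must show that the trivializing cover of the lifted family has the \emph{same} Galois group $G$ as the special fibre and specializes to $\pi$, equivalently that the Weil--Châtelet class of $\mathcal X/\mathcal B$ remains $n$-torsion and generates the same cyclic cover after reduction. I would control this through the constancy of the $j$-invariant over $R$ together with the étaleness of the trivializing cover, which rigidifies the monodromy and prevents the covering group from jumping under specialization. The torsion-lifting lemma of the forward direction is routine by comparison (Serre--Tate, or flatness of the Katz--Mazur models of $[\Gamma_1(n)]$-structures). The genuine content, consistent with the formal smoothness of \thmref{def-functor-mapping}, is that the relevant formal deformations are \emph{unobstructed}---the wild étale $G$-cover $C\to B$ lifts uniquely over every Artinian thickening---so the equivalence is not a triviality but a statement about \emph{algebraization} of the formal lift, which is precisely the Oort lifting problem.
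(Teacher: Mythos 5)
Your converse direction rests on a misreading of the geometric setup, and the error is fatal rather than cosmetic. In the paper's construction $G$ is an arbitrary cyclic subgroup of $\Aut(C)$, acting on $C$ \emph{with fixed points} in general --- this is the only case in which the Oort conjecture has content, since for a free action the cover $C \to B = C/G$ is \'etale and the pair $(C,G)$ lifts automatically by the \'etale lifting theorem. Freeness of the diagonal action on $E \times C$ comes from the $E$-factor, where $G$ acts by translations by a point of exact order $\ord(G)$, not from the $C$-factor as you assert in your forward direction. Consequently $f \colon X \to B$ acquires multiple fibres over the branch points: it is not smooth, not an elliptic fibre bundle, and not an $E$-torsor over $B$. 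This invalidates, in order: your appeal to \thmref{kodaira-one} for $X \to B$ (the theorem is proved only for elliptic fibre bundles); your claim that the non-smooth locus of $f_R$ misses the closed point of $\Spec R$ (the special fibre itself is non-smooth); the formula $K_X = f^*K_B$ and the conclusion $g(B) \ge 2$ (false in general --- for Artin--Schreier-type actions $B$ can even be $\P^1$, with $\kappa(X)=1$ coming from the multiple-fibre terms in the canonical bundle formula); and, most seriously, the recovery of $C \to B$ as a finite \emph{\'etale} Galois trivializing cover of a torsor. The cover $C \to B$ is wildly ramified precisely in the interesting cases, so no \'etaleness or monodromy-rigidity argument can reconstruct it; your phrase ``wild \'etale $G$-cover'' is a contradiction in terms and signals that your argument has silently reduced to the trivial (free) case.

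The paper's proof goes through the covering rather than the base: since $G$ acts freely on $E \times C$, the quotient map $X' = E \times C \to X$ is finite \'etale Galois with group $G$, so any deformation $\mdl X$ of $X$ carries a unique lifting $\mdl X' \to \mdl X$, again Galois with group $G$ (SGA~1). Now $X' \to C$ \emph{is} an elliptic fibre bundle of Kodaira dimension one, so \proref{kodaira-one-prop} applies to $\mdl X'$ and produces the canonical fibration $\mdl X' \to \mdl C$ lifting $X' \to C$; because $\mdl C$ is the canonical model of $\mdl X'$ (cut out by pluricanonical sheaves, \proref{pro-fibration-is-canoical}), the $G$-action on $\mdl X'$ descends to a $G$-action on $\mdl C$ lifting the given one, faithful because the automorphism scheme of a curve of genus $\ge 2$ is unramified. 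To repair your argument you would have to replace everything after ``suppose $X$ lifts'' with this covering-space detour; the torsor-theoretic machinery you invoke simply does not see the ramified cover. Your forward direction is essentially the paper's (which it dismisses as clear), and your extra detail on lifting the pair $(E,t)$ --- prime-to-$p$ torsion via \'etaleness, $p$-torsion via ordinarity and Serre--Tate --- is correct and worthwhile, provided you fix the justification that $(\mathcal E \times_R \mathcal C)/G \to \Spec R$ is smooth: the quotient map is \'etale because $G$ acts freely on the $\mathcal E$-factor, not on $\mathcal C$.
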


As a further application of the theory, we treat the case of bielliptic surfaces. Here we need methods from the deformation theory of abelian schemes. 
Our main result is:

\begin{nnthm}[\ref{bielliptic-deformations}]
If $X$ is a bielliptic surface, then both elliptic fibrations extend under deformations. In other words, every deformation of a bielliptic surface is bielliptic.
\end{nnthm}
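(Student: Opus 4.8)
The plan is to produce, for an arbitrary deformation $\mathcal{X}/S$ of a bielliptic surface $X$, extensions of \emph{both} of its elliptic fibrations. Recall that $X = (E \times F)/G$, where $E,F$ are elliptic curves and $G$ is a finite (possibly non-reduced) group scheme acting on $E$ by translations and on $F$ by affine automorphisms with $F/G \cong \P^1$, the diagonal action on $A := E\times F$ being free. The two fibrations are the Albanese map $a\colon X \to E/G$, induced by the projection $p_E\colon A \to E$ and with elliptic fibres $F$, and the map $\pi\colon X \to F/G \cong \P^1$, induced by $p_F\colon A \to F$ and with elliptic fibres $E$ together with multiple fibres over the branch points. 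The first is an elliptic fibre bundle in the sense of the earlier sections, while the second is the genuinely new object to control.

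The first fibration extends unconditionally, for formal reasons. Since $b_1(X)=2$ the Albanese variety is an elliptic curve, and the relative Albanese $\mathcal{X}\to \Alb(\mathcal{X}/S)$ is defined for the whole family and restricts to $a$ on the special fibre; this gives an extension of $a$ over every base $S$, so \emph{every} deformation of $X$ preserves its Albanese fibration. By the classification of deformations of elliptic fibre bundles proved above — the Jacobian ones being of the shape \eqref{Jacobian-exp} and the non-Jacobian ones governed by the formally smooth map $\Fib_{X/C}\to\Jac_{J/C}$ — it follows that $\mathcal X$ is itself the quotient of a deformation $\mathcal A/S$ of the abelian surface $A = E\times F$ by a deformation $\mathcal{G}$ of the group scheme $G$ acting freely. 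Thus the whole problem reduces to the study of $G$-equivariant deformations of $A$, which is where the deformation theory of abelian schemes enters.

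For the second fibration I would argue through the subtori of $A$. The fibres of $p_F$ are the cosets of $T_E := E\times\{0\}$, so $p_F$ realises $A/T_E \cong F$; although $T_E$ need not be $G$-stable as a subgroup when the action on $F$ involves translations, its class is preserved by the \emph{linear part} $\rho\colon G \to \Aut(A,0)$, since $\rho(g)$ acts as the identity on $\Lie(E)$ and by a non-trivial character on $\Lie(F)$. Hence $T_E$ is exactly the sub-abelian variety cut out by the trivial isotypic component of the $G$-representation on $\Lie(A)$, and $T_F := \{0\}\times F$ the one cut out by the remaining non-trivial character. The crux is to show that this isotypic decomposition persists under $\mathcal G$-equivariant deformation, so that $T_E$ and $T_F$ lift to $\mathcal G$-stable sub-abelian schemes $\tilde T_E,\tilde T_F \subset \mathcal A$; then $\mathcal A/\tilde T_E$ and $\mathcal A/\tilde T_F$ deform $F$ and $E$ equivariantly, and dividing by $\mathcal G$ yields the maps $\mathcal X \to (\mathcal A/\tilde T_E)/\mathcal G$ and $\mathcal X \to (\mathcal A/\tilde T_F)/\mathcal G$. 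Because $F/G \cong \P^1$ and $\P^1$ is rigid, $(\mathcal A/\tilde T_E)/\mathcal G \cong \P^1_S$, and this map is the sought extension of $\pi$.

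The hard part is precisely the persistence of this isotypic splitting in the cases of interest, namely characteristics two and three with $p\mid \#G$ and $G$ possibly non-reduced, where $\Lie(A)$ is not a semisimple $G$-module and naive character theory fails. Here one must pass to crystalline, respectively Dieudonn\'e, data: the action of $G$ (or of its Cartier dual) on the Dieudonn\'e module of $\mathcal A$ and on the de Rham realisation, and argue via Grothendieck--Messing theory that the sub-$p$-divisible groups, equivalently the sub-Dieudonn\'e modules, corresponding to $T_E$ and $T_F$ lift uniquely and remain $\mathcal G$-stable along the deformation. Once both fibrations are known to extend, each fibre of the family carries an Albanese fibration onto an elliptic curve and a second elliptic fibration onto $\P^1$ with multiple fibres, and is of the form $(\tilde E\times\tilde F)/\tilde G$; this is exactly the assertion that every deformation of $X$ is again bielliptic.
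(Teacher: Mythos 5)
There is a genuine gap, and it sits exactly where you flag ``the hard part'': the persistence of the isotypic splitting is not proved, and the mechanism you propose for it degenerates in precisely the only cases where the theorem is nontrivial. When $d$ is prime to $p$ the result is easy (the Jacobian deformation functor is unobstructed and everything is unobstructed character theory). But in the critical cases ($p=2, d=2$ and $p=3, d=3$, where $d$ is a power of $p$) the generator of $G$ acts \emph{trivially} on the Lie algebra of the elliptic factor it rotates, since $\Hom(\Z/p^n\Z, \G_m)=0$ --- this is observed in the proof of Proposition \ref{pro-bielliptic-invariants}. So $\Lie(A)$ consists of a single (trivial) isotypic component, and your characterization of $T_E$ and $T_F$ as distinct isotypic pieces is vacuous: there is no Lie- or character-theoretic datum distinguishing the two subtori. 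The crystalline patch does not repair this: sub-$p$-divisible groups do \emph{not} lift uniquely along deformations; by Grothendieck--Messing a lift exists if and only if the Hodge filtration of the chosen deformation is compatible with the given sub-crystal, and verifying that compatibility for an \emph{arbitrary} deformation $\mdl X$ is tantamount to the theorem itself. Writing ``one must argue via Grothendieck--Messing theory that the sub-$p$-divisible groups lift uniquely and remain $\mdl G$-stable'' names the obstruction without discharging it. A secondary gap of the same flavor: your claim that the Albanese fibration extends ``for formal reasons'' via a relative Albanese is unjustified in exactly these cases, because for $d$ a power of $p$ one has $h^1(\O_X)=2$ while $\Alb(X)$ is a curve, so $\Pic^0_X$ is non-reduced and no well-behaved relative Albanese over an Artinian base is available. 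The paper's Proposition \ref{bielliptic-obstructions} has to build this lift by hand: lift $\omega$ to a group automorphism $\Omega$ of $\mdl A$, and show that the trace endomorphism $\tr_\Omega$ annihilates $\mdl A[d]$ by a d\'evissage of $\mdl A[d]$ into multiplicative, biinfinitesimal and \'etale parts, so that $\tr_\Omega$ factors through $[d]$ and yields a lift of the projection on $p$-divisible groups.

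For the second fibration the paper sidesteps the subtorus-lifting problem entirely, and it is worth seeing why: by versality it suffices to treat the versal family, which is first algebraized (Proposition \ref{algebraization}, using that the fixed locus of $\Omega$ is a relative Cartier divisor of positive degree, hence relatively ample, so Grothendieck's algebraization theorem applies). On the generic fibre of the algebraization over $K=\Frac(R)$, surface theory (the canonical bundle has self-intersection zero) produces an elliptic fibration onto $\mathbb{P}^1_K$ from a power $\sheaf L^{\otimes m}$ of the line bundle attached to that divisor; this extends to a rational map over $R$, and a specialization argument in the numerical lattice of $X$ (the lattice is spanned by the two fibre classes, there are no effective divisors of negative self-intersection, and curves of canonical type on $X$ are irreducible) shows that $\sheaf L^{\otimes m}$ is globally generated, hence defines an honest morphism lifting $g$. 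A posteriori your subtorus does lift over the versal base (pull the lifted $\mathbb{P}^1$-fibration back to $\mdl A$ and take a Stein factorization), but the paper obtains this as a \emph{consequence} of the geometric argument rather than as its input; if you want to keep your route, the burden is to prove the Hodge-filtration compatibility directly, which is an open step in your write-up.
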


 For small $p$ one encounters phenomena, which do not appear when considering the same class of surfaces in characteristic zero.
For example, deformations become obstructed which was already observed by W. Lang in \cite{LangEx}.  There is also the possibility of deforming a Jacobian bielliptic surface into a non-Jacobian one, which is absent in characteristic zero.

\subsection*{Acknowledgments}
\thanks{This article is a part of my Ph.D. thesis. I am indebted to my advisor Stefan Schr\"oer for introducing me to liftabilty questions and helpful discussions. I would also like to thank William Lang, Christian Liedtke, Matthias Sch\"{u}tt and Philipp Gross and the referee for inspiring conversations, hints, and pointing out mistakes.}

\section{Preliminaries}
In this section we introduce some standard techniques which will be used later on.
Concerning deformation theory, we follow Schlessinger's fundamental paper \cite{SCHL1} in terminology, and freely make use of basic facts about pro-representable hulls of deformation functors.

A key problem that will appear in Sections \ref{deformations-kodaira-one}  and \ref{sec-bielliptic} is of the following form:
Given a deformation $\mdl X$ of some scheme $X$, what properties and additional structures carry over to $\mdl X$?
One example for such properties are \'etale coverings.


\begin{thm}[Theorem 5.5 and Theorem 8.3 \cite{sga1}]
Let $\mdl S$ be a scheme with a closed subscheme $S_0$ having the same topological space as $\mdl S$ itself.
Then the functor
\[ \mdl X \mapsto \mdl X \times_{\mdl S} S_0 \]
form the category of \'etale $\mdl S$-schemes to the category of \'etale $S_0$-schemes is an equivalence of categories.
\end{thm}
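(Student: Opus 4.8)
The plan is to verify the two defining properties of an equivalence separately: full faithfulness and essential surjectivity. Since the formation of $\mdl X \mapsto \mdl X \times_{\mdl S} S_0$ commutes with restriction to open subschemes of $\mdl S$, and since both properties may be checked Zariski-locally, I would first reduce to the affine situation $\mdl S = \Spec A$, $S_0 = \Spec A/I$. The hypothesis that $S_0$ and $\mdl S$ have the same underlying space translates into $V(I) = \Spec A$, that is, $I$ lies in the nilradical of $A$; in the Noetherian case $I$ is then nilpotent. Factoring $\Spec A/I \hookrightarrow \Spec A/I^2 \hookrightarrow \cdots \hookrightarrow \Spec A/I^n = \Spec A$ into successive closed immersions with square-zero ideals $I^k/I^{k+1}$, and using that a composite of equivalences is an equivalence, I would reduce further to the case where $I$ is a square-zero ideal; the non-Noetherian case is dealt with afterwards by Noetherian approximation, writing $A$ as a filtered colimit of finitely generated subrings and exploiting that \'etale algebras are of finite presentation.

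For full faithfulness I would invoke that an \'etale morphism is formally \'etale, i.e.\ satisfies the unique infinitesimal lifting property against nilpotent thickenings. Given \'etale $\mdl S$-schemes $\mdl X, \mdl Y$ with reductions $X_0, Y_0$, the thickening $X_0 \hookrightarrow \mdl X$ is the base change of the nilpotent thickening $S_0 \hookrightarrow \mdl S$, hence itself nilpotent. Formal \'etaleness of $\mdl Y \to \mdl S$ then yields a bijection $\Hom_{\mdl S}(\mdl X, \mdl Y) \xrightarrow{\sim} \Hom_{\mdl S}(X_0, \mdl Y)$, and the universal property of the fibre product $Y_0 = \mdl Y \times_{\mdl S} S_0$ (using that $X_0$ already maps to $S_0$) identifies the target with $\Hom_{S_0}(X_0, Y_0)$. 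One checks that the resulting composite bijection is exactly the map induced by the functor, which is full faithfulness.

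Essential surjectivity is where the real work lies, and I would split it into local existence and gluing. Locally, an \'etale $S_0$-algebra has a standard presentation $\bigl((A/I)[t]/(\bar f)\bigr)_{\bar g}$ with $\bar f$ monic and $\bar f'$ invertible; lifting $\bar f$ to a monic $f \in A[t]$ and $\bar g$ to any $g$, the algebra $(A[t]/(f))_g$ is \'etale over $A$, since invertibility of $f'$ is preserved under reduction modulo the nilpotent ideal $I$, and it plainly reduces to the given algebra. To globalise, I would cover $X_0$ by such standard-\'etale opens, lift each to an \'etale $\mdl S$-scheme, and then glue: the full faithfulness already established guarantees that the lifts are canonically isomorphic over overlaps and, crucially, that the overlap isomorphisms lift \emph{uniquely} to the thickenings, so that the cocycle condition on triple overlaps is inherited automatically from $X_0$. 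The glued object is the required \'etale lift $\mdl X$ with $\mdl X \times_{\mdl S} S_0 \cong X_0$.

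The main obstacle, as indicated, is the gluing step in essential surjectivity: local lifts exist easily and are unique up to isomorphism, but one must arrange the unique liftings of the overlap isomorphisms so that the descent datum genuinely satisfies the cocycle identity — this is precisely what the uniqueness half of the infinitesimal lifting property provides, which is why establishing full faithfulness first is the right order of operations. A secondary technical point is the passage from square-zero to general $I$ contained in the nilradical, handled by the d\'evissage above in the Noetherian case and by filtered colimits in general.
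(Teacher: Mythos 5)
Your proposal is correct and takes essentially the same route as the result the paper invokes: the paper itself gives no proof, citing SGA1, and the standard argument behind that citation is precisely yours --- full faithfulness from the unique infinitesimal lifting property of formally \'etale morphisms against the nilpotent thickening $S_0 \hookrightarrow \mdl S$ (after d\'evissage to square-zero ideals), and essential surjectivity by lifting standard \'etale presentations $\bigl((A/I)[t]/(\bar f)\bigr)_{\bar g}$ modulo the nilpotent ideal and then gluing, with the cocycle condition inherited through the uniqueness of lifted overlap isomorphisms. Since the paper declares all schemes and rings Noetherian (\S\ref{notations}), your non-Noetherian aside is superfluous; if one did want it, the approximation should run over finitely generated subideals $J \subset I$ (so that $A/I = \varinjlim A/J$ with each $J$ nilpotent) rather than over finitely generated subrings of $A$, but this does not affect the correctness of the argument in the setting at hand.
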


This theorem can be seen as a geometric form of Hensel's Lemma from commutative algebra.
We note two special cases: The categories of \'etale Galois covering of $\mdl X$ and $X_0$ are equivalent, and so are the categories of finite \'etale group schemes.
Recall that an \'etale covering $S' \to S$ is called Galois with group $G$ if $G$ acts on $S'$ as an $S$-scheme and we have an isomorphism 
\begin{equation*}
 G \times S' \simeq S' \times_S S' \;\text{  given by  }\; (\sigma, x) \mapsto (\sigma(x), x).
\end{equation*}

\subsubsection{Notations}\label{notations}
Finally, let us fix some notations.
By $k$ we denote an algebraically closed field of characteristic $p > 0$.
We denote by $W = W_\infty(k)$ its ring of Witt vectors. Let $\Alg_W$ be the category of local Artinian $W$-algebras having residue field $k$.
Every scheme and every ring will be assumed Notherian, and by a curve over some base scheme $S$ we will always mean a proper and connected one-dimensional $S$-scheme.

\begin{defn}\label{defn-efbundle}
 Let $S$ be a scheme over some ring $R$. An $R$-morphism $X \to S$ is called \emph{elliptic fibre bundles}
If there exists a surjective \'etale morphism $U \to S$ and an elliptic curve $E$ over $R$ such that
\[ U \times_S X \simeq U \times_{\Spec(R)} E. \]
\end{defn}

\section{Deformations of Jacobian elliptic fibre bundles}\label{sec-jacobian}
 A \emph{Jacobian} elliptic fibre bundle is a pair $(J/S, \epsilon)$ where $J \to S$ is an elliptic fibre bundle and $\epsilon$ is a section of $J \to S$.
We can consider $(J/S, \epsilon)$ as an elliptic curve over $S$.
In particular, there exists a unique commutative group scheme structure on $J/S$.

We are going to work over an algebra $\Lambda \in \Alg_W$.

%

\begin{prop}\label{prop-global-splitting}
Let $\mdl S$ be a proper flat $\Lambda$-scheme such that $\mdl S \otimes_\Lambda k$ is integral.
Let $\mdl J / \mdl S$ be an elliptic curve over $\mdl S$.
For some integer $n \ge 3$ which is prime to $p$, assume that the $n$-torsion subgroup scheme of $\mdl J$ is split i.e.,
there is an isomorphism \[  \mdl J[n] \simeq (\Z/n\Z)^2 . \]
Then there exists an elliptic curve $\mdl E$ over $\Lambda$ such that $\mdl J$ is isomorphic to $\mdl E \times_\Lambda \mdl S$.
\end{prop}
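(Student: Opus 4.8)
The plan is to realise $\mdl J$ as the pullback of a universal elliptic curve along a classifying morphism, and then to show that this morphism must be constant because $\mdl S$ is proper while the relevant moduli space is affine. Since $n \ge 3$ is prime to $p$ and $k$ is algebraically closed of characteristic $p$, the ring $W$ contains a primitive $n$-th root of unity (Teichm\"uller lift of $\mu_n(k)$), and the moduli problem of pairs $(E/T,\ \text{isomorphism } (\Z/n\Z)^2 \xrightarrow{\sim} E[n])$ is rigid, hence representable by a smooth \emph{affine} $W$-scheme $\MGamma{n}$ carrying a universal elliptic curve $\mdl E^{\mathrm{univ}} \to \MGamma{n}$; here affineness is the crucial feature, and it holds precisely because $\mdl J$ is an honest elliptic curve and is therefore classified by the open modular curve rather than by its cuspidal compactification. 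The given splitting $\mdl J[n] \simeq (\Z/n\Z)^2$ is exactly such a level structure (perfectness of the Weil pairing moreover forces $\mu_n$ to be constant over $\mdl S$, so there is no obstruction to working over $W$), and it therefore determines a $\Lambda$-morphism $\phi\colon \mdl S \to \MGamma{n}\times_W\Lambda$ with $\phi^*\mdl E^{\mathrm{univ}} \simeq \mdl J$.

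The key computation is then $\Gamma(\mdl S,\O_{\mdl S}) = \Lambda$. Because $\mdl S \to \Spec\Lambda$ is proper and flat and the special fibre $\mdl S_0 = \mdl S\otimes_\Lambda k$ is integral and proper over the algebraically closed field $k$, one has $H^0(\mdl S_0,\O_{\mdl S_0}) = k$. I would filter $\Lambda$ by the powers of its maximal ideal and induct over small extensions $\ses{k}{\Lambda}{\Lambda''}$: flatness of $\mdl S$ gives $\O_{\mdl S}\otimes_\Lambda k \cong \O_{\mdl S_0}$ and hence a short exact sequence $\ses{\O_{\mdl S_0}}{\O_{\mdl S}}{\O_{\mdl{S}''}}$ with $\mdl{S}'' = \mdl S\otimes_\Lambda\Lambda''$. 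Comparing the resulting left-exact cohomology sequence with the sequence of base rings and applying the five lemma (the outer vertical maps $k \xrightarrow{\sim} k$ and $\Lambda'' \xrightarrow{\sim} H^0(\O_{\mdl{S}''})$ being isomorphisms by the base case and the inductive hypothesis) shows that the structural map $\Lambda \to \Gamma(\mdl S,\O_{\mdl S})$ is an isomorphism. Equivalently, this is cohomology and base change in degree zero.

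Now I would use affineness to force $\phi$ to be constant. Writing $\MGamma{n}\times_W\Lambda = \Spec A$, a $\Lambda$-morphism $\phi\colon \mdl S \to \Spec A$ is the same datum as a $\Lambda$-algebra homomorphism $A \to \Gamma(\mdl S,\O_{\mdl S}) = \Lambda$, i.e.\ a single $\Lambda$-point $s \in \MGamma{n}(\Lambda)$; concretely $\phi$ factors as $\mdl S \xrightarrow{\pi} \Spec\Lambda \xrightarrow{s} \MGamma{n}\times_W\Lambda$, where $\pi$ is the structural morphism. Setting $\mdl E := s^*\mdl E^{\mathrm{univ}}$, an elliptic curve over $\Lambda$, we obtain
\[ \mdl J \simeq \phi^*\mdl E^{\mathrm{univ}} \simeq \pi^* s^* \mdl E^{\mathrm{univ}} = \pi^*\mdl E = \mdl E \times_\Lambda \mdl S, \]
which is the assertion.

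The substantive content — and the step I expect to be the main obstacle — is the interplay of the two geometric inputs: the \emph{affineness} of $\MGamma{n}$ and the \emph{properness together with connectedness} of $\mdl S$, the latter encoded in $\Gamma(\mdl S,\O_{\mdl S}) = \Lambda$. Once these are in place the classifying map is constant almost by definition. Everything else is formal: representability for $n \ge 3$ is standard (Katz--Mazur, Deligne--Rapoport), and one need only take mild care that the level structure is handled without assuming $\zeta_n \in \Lambda$ a priori, which, as noted, is automatic here.
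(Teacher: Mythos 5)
Your proposal is correct and follows essentially the same route as the paper: choose the level-$n$ structure, obtain the classifying map to the affine moduli scheme $\MGamma{n}$ of Katz--Mazur, and force it to factor through $\Spec(\Lambda)$ via $\Gamma(\mdl S, \O_{\mdl S}) = \Lambda$. The only difference is that you spell out the cohomology-and-base-change induction proving $\Gamma(\mdl S, \O_{\mdl S}) = \Lambda$, which the paper simply asserts by calling $\Spec(\Lambda)$ the affine hull of $\mdl S$.
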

\begin{proof}
We can choose a level-$n$-structure on $\mdl J/\mdl S$.
By \cite[Corrolary 4.7.2]{KM} we obtain a morphism $c \colon \mdl S \to \MGamma{n}$ such that $\mdl J \simeq c^*(\mdl E^{univ})$ where $$(\mdl E^{univ}, \gamma \colon \mdl E^{univ}[n] \simeq (\Zm{n})^2)$$ is the universal family of the moduli problem.


Again by \cite{KM} we know that $\MGamma{n}$ is affine.
Thus $c$ factors over the affine hull of $\mdl S$, namely $\Spec(H^0(\mdl S, \O_{\mdl S})) = \Spec(\Lambda)$.
Therefore $\mdl J$ is just the pullback of an elliptic curve $\mdl E$ over $\Lambda$.
\end{proof}

In particular, we see that $\mdl J/\mdl S$ is an elliptic fibre bundle under the assumptions of \proref{prop-global-splitting}.
This can be generalized, because for an arbitray elliptic curve $\mdl J/\mdl S$, and an interger $n$ prime to $p$,
we always have that $\mdl J[n]$ is a finite \'etale group scheme over $\mdl S$, so 
there exists an \'etale Galois covering $\mdl S' \to \mdl S$ such that $\mdl J[n] \times_{\mdl S} \mdl S' \simeq (\Zm{n})^2$.

\begin{prop}\label{prop-global-structure}
Let $\mdl S$ be a proper flat $\Lambda$-scheme such that the special fibre $\mdl S \otimes_\Lambda k$ is regular.
Let $\mdl J / \mdl S$ be an elliptic curve over $\mdl S$, and
let $\mdl S' \to \mdl S$ be a finite \'etale Galois covering with group $G$ such that $\mdl J[n] \times_{\mdl S} \mdl S' \simeq (\Z/n\Z)^2$ for some $n \ge 3$. 
Then $$ \mdl J \simeq (\mdl E \times_\Lambda \mdl S')/G,$$
 where $\mdl E$ is an elliptic curve over $\Lambda$, and the action is the diagonal action given by the Galois action on $\mdl S'$ and by a homomorphism $ G \to \Aut(\mdl E)$ on the left factor. 
\end{prop}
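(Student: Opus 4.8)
The plan is to pull $\mdl J$ back to $\mdl S'$, trivialize it there by means of \proref{prop-global-splitting}, and then descend the resulting product structure together with the Galois action. Set $\mdl J' = \mdl J \times_{\mdl S} \mdl S'$ and let $\pi \colon \mdl S' \to \mdl S$ denote the covering. Since $\pi \circ g = \pi$ for every $g \in G$, pullback equips $\mdl J'$ with a canonical $G$-equivariant structure: the group $G$ acts on the total space $\mdl J'$ lying over its action on $\mdl S'$, by automorphisms of elliptic curves fixing the zero section inherited from $\mdl J$, and the quotient recovers $\mdl J'/G \simeq \mdl J$. Moreover $\mdl J'[n] = \mdl J[n]\times_{\mdl S}\mdl S' \simeq (\Z/n\Z)^2$ is split, and $\mdl S'$ is proper and flat over $\Lambda$ since $\pi$ is finite \'etale.

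To trivialize $\mdl J'$, I would invoke \proref{prop-global-splitting} over $\mdl S'$. The special fibre $\mdl S' \otimes_\Lambda k$ is regular because $\pi$ is \'etale, hence a disjoint union of integral schemes; as the trivializing cover may be taken connected (and in any case one may argue on each connected component separately), I may assume $\mdl S' \otimes_\Lambda k$ integral. Then \proref{prop-global-splitting} produces an elliptic curve $\mdl E$ over $\Lambda$ and an isomorphism $\psi \colon \mdl J' \xrightarrow{\ \sim\ } \mdl E \times_\Lambda \mdl S'$ of elliptic curves over $\mdl S'$.

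It remains to transport the $G$-action through $\psi$ and recognise it as diagonal. For $g \in G$, conjugating the action of $g$ by $\psi$ gives an isomorphism of $\mdl S'$-group schemes $\mdl E \times_\Lambda \mdl S' \to g^*(\mdl E \times_\Lambda \mdl S') = \mdl E \times_\Lambda \mdl S'$, that is, a morphism $\beta_g \colon \mdl S' \to \underline{\Aut}(\mdl E)$ into the automorphism scheme of $\mdl E / \Lambda$ fixing the origin. Now $\underline{\Aut}(\mdl E)$ is finite, in particular affine, over $\Lambda$, so $\beta_g$ factors through $\Spec H^0(\mdl S', \O_{\mdl S'}) = \Spec \Lambda$ --- this is precisely the rigidity already exploited in \proref{prop-global-splitting}. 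Hence $\beta_g$ is a constant automorphism $\rho(g) \in \Aut(\mdl E)$, and the cocycle identity satisfied by the equivariant structure translates into $\rho(gh) = \rho(g)\rho(h)$. Thus $\rho \colon G \to \Aut(\mdl E)$ is a homomorphism, the transported action sends $(e,s)$ to $(\rho(g)e, gs)$, and $\mdl J \simeq \mdl J'/G \simeq (\mdl E \times_\Lambda \mdl S')/G$ as claimed.

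The heart of the argument, and the step I expect to be most delicate, is this last descent: one must verify that the $G$-equivariant structure on the constant family is itself constant in the fibre direction, which rests on the affineness of $\underline{\Aut}(\mdl E)$ over $\Lambda$ together with $H^0(\mdl S', \O_{\mdl S'}) = \Lambda$. A secondary point requiring care is the passage from the weakened regularity hypothesis back to the integral setting of \proref{prop-global-splitting}, i.e.\ the treatment of the connected components of $\mdl S'$ permuted by $G$.
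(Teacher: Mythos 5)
Your proof is correct, and its skeleton --- pull back to $\mdl S'$, trivialize there via \proref{prop-global-splitting}, then descend --- is exactly the paper's; the genuine difference is the mechanism for the decisive constancy step. The paper argues cohomologically: $\mdl J$ is a twist of $\mdl E \times_\Lambda \mdl S$, classified by $H^1(G, A(\mdl S'))$ with $A = \Aut(\mdl E \times_\Lambda \mdl S)$, and the $G$-action on $A$ is shown trivial by embedding $A$ via rigidity into $\Aut(\mdl E[n] \times_\Lambda \mdl S)$, which is the constant group scheme $\GL(2, \Zm{n})$ because $\Lambda$ is strictly henselian; a $\pi_1(\mdl S)$-argument then gives $H^1(G, A(\mdl S')) \simeq \Hom(G, A(\mdl S'))$, and the twist attached to $\rho$ is the diagonal quotient. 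You instead manipulate the explicit equivariant (descent) datum and force each $\beta_g$ to be constant by factoring through $\Spec H^0(\mdl S', \O_{\mdl S'}) = \Spec(\Lambda)$, using affineness of the Aut-scheme --- the same affine-hull trick the paper uses in \proref{prop-global-splitting} for the moduli map, rather than the constant-group-scheme/$\pi_1$ argument. Your route avoids nonabelian Galois cohomology altogether, at the cost of needing $H^0(\mdl S', \O_{\mdl S'}) = \Lambda$ (standard, by induction on the length of $\Lambda$, and already used implicitly in \proref{prop-global-splitting}); note that finiteness of $\underline{\Aut}(\mdl E)$ is itself most cleanly obtained from the rigidity embedding into the finite \'etale $\underline{\Aut}(\mdl E[n])$, so your parenthetical is apt, though \proref{prop-global-splitting} really exploited affineness of $\MGamma{n}$, not rigidity. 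Your worry about components of $\mdl S'$ is dissolved by the paper's standing convention that Galois coverings are connected: its proof opens by asserting $\mdl S'$ connected, after which regularity of $\mdl S' \otimes_\Lambda k$ (\'etale over a regular scheme) yields integrality, just as you argue.
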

\begin{proof}
The scheme $\mdl S'$ is connected, and because $\mdl S \otimes_\Lambda k$ is regular, so is $\mdl S' \otimes_\Lambda k$. In particular, $\mdl S' \otimes_\Lambda k$ is integral.
Hence the elliptic curve $$\mdl J \times_{\mdl S} \mdl  S' \to \mdl S'$$ satisfies the assumptions of \proref{prop-global-splitting}.
Thus there exists an elliptic curve $\mdl E$ over $\Lambda$  and an isomorphism $\mdl J \times_{\mdl S} \mdl S' \simeq \mdl E \times_\Lambda \mdl S'$.
In other words, we know that $\mdl J$ and $\mdl E \times_\Lambda \mdl S'$ are twists of each other, becoming isomorphic under the base change $\mdl S' \to \mdl S$. 

Twists of $\mdl E \times_\Lambda \mdl S$ are classified up to isomorphism by the Galois cohomology set
$ H^1(G, A(\mdl S')), $
where $A$ is the group scheme $\Aut(\mdl E \times_\Lambda \mdl S)$ and we consider its $\mdl S'$-valued points as Galois module under $G$.

We claim that the Galois action on $A$ is trivial:
We have a closed immersion $A \subset \Aut(\mdl E[n] \times_\Lambda \mdl S)$ by rigidity \cite[Corollary 2.7.3]{KM}.
However, since $\Lambda$ is a strict henselian ring, we find that $\mdl E[n] \times_\Lambda \mdl S$ is the constant group scheme $(\Z/n\Z)^2$ on $\mdl S$ which in turn implies that $\Aut(\mdl E[n] \times_\Lambda \mdl S)$ is the constant group scheme $\GL(2, \Z/n\Z)$ on $\mdl S$.

Finite \'etale group schemes over $\mdl S$ correspond to finite abstract groups with a continuous $\pi_1(\mdl S)$-action.
We saw that $A$ can be embedded into a group scheme with trivial $\pi_1(\mdl S)$-action, hence the action on $A$ has to be trivial as well.
The action of $G$ on $A$ is an induced action of a finite quotient $\pi_1(\mdl S) \twoheadrightarrow G$, and therefore trivial as well.
Thus we have
$$ H^1(G, \Aut(\mdl E \times_\Lambda \mdl S)(\mdl S')) \simeq \Hom(G, \Aut(\mdl E \times_\Lambda \mdl S)(\mdl S')). $$
For a homomorphism $\rho$ in the above group, the corresponding twist looks like $(\mdl E \times_\Lambda \mdl S')/G,$ where the action of $\sigma \in G$ is given by
\[
 (x, y) \mapsto (\rho(\sigma)(x), \sigma y). \qedhere
\] 
\end{proof}

Now, let $S \simeq \mdl S \otimes_R k$ denote the reduction of $\mdl S$.
Given an elliptic curve $E/S$ we can use the above results
to give a necessary and sufficient criterion for the existence of Jacobian liftings:

\begin{cor}\label{cor-obstructions-Jacobian-case}
 Let $J$ be an elliptic curve over $S$, given by $(E \times_k  S') / G$ for some \'etale Galois covering $ S' \to S$ with group $G$ (\proref{prop-global-structure}). Denote the action of $G$ on $E$ by $\rho_0$.
Then there exists a lifting $\mdl J \to \mdl S$ if and only if there exists a lifting $\mdl E$ of $E$ over $\Lambda$ together with an extension of the action $\rho_0$.
\end{cor}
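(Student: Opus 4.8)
The plan is to prove the two implications separately, with the common engine being the equivalence between \'etale coverings of $\mdl S$ and of its special fibre $S = \mdl S \otimes_\Lambda k$ (the SGA1 theorem recalled in the Preliminaries, which applies because $\Lambda$ is Artinian local, so that $S$ and $\mdl S$ share the same underlying topological space), combined with the structural result in \proref{prop-global-structure}. In particular, the given Galois covering $S' \to S$ with group $G$ lifts uniquely to a Galois covering $\mdl S' \to \mdl S$ with the same group $G$, and this lifted covering is the object that transports the data in both directions.

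For the implication from a lifted pair to a lifted curve, I would start from a lift $\mdl E$ of $E$ over $\Lambda$ and a homomorphism $\rho \colon G \to \Aut(\mdl E)$ reducing to $\rho_0$. I then equip $\mdl E \times_\Lambda \mdl S'$ with the diagonal $G$-action (via $\rho$ on the first factor and the Galois action on the second) and set $\mdl J = (\mdl E \times_\Lambda \mdl S')/G$. Because $G$ acts freely on $\mdl S'$ it acts freely on the product, so the quotient exists, and after the \'etale base change along $\mdl S' \to \mdl S$ the projection $\mdl J \to \mdl S'/G = \mdl S$ becomes $\mdl E \times_\Lambda \mdl S' \to \mdl S'$. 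Hence $\mdl J \to \mdl S$ is flat and proper, an elliptic curve carrying the section induced by the zero section of $\mdl E$ (which is $G$-invariant, since automorphisms in $\Aut(\mdl E)$ fix the identity). Reducing modulo the maximal ideal of $\Lambda$ returns $(E \times_k S')/G = J$, so $\mdl J$ is the sought lift.

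For the converse I would take a lift $\mdl J \to \mdl S$ and apply \proref{prop-global-structure} to it, using the lifted covering $\mdl S' \to \mdl S$. The hypotheses are met: $\mdl S$ is proper and flat over $\Lambda$ with regular special fibre $S$, and over $\mdl S'$ the $n$-torsion is trivial. For the latter I invoke the equivalence of categories once more: the \'etale $\mdl S'$-group scheme $\mdl J[n] \times_{\mdl S} \mdl S'$ reduces to $J[n] \times_S S' \simeq (\Zm n)^2$, and since the trivial covering lifts to the trivial covering we obtain $\mdl J[n] \times_{\mdl S} \mdl S' \simeq (\Zm n)^2$. Then \proref{prop-global-structure} produces an elliptic curve $\mdl E$ over $\Lambda$ and a homomorphism $\rho \colon G \to \Aut(\mdl E)$ with $\mdl J \simeq (\mdl E \times_\Lambda \mdl S')/G$.

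The hard part, and the step I expect to absorb most of the work, is verifying that this $(\mdl E, \rho)$ genuinely reduces to $(E, \rho_0)$ rather than being some unrelated lift. Here I would trace through the construction of \proref{prop-global-splitting}: $\mdl E$ arises from the classifying map $\mdl S' \to \MGamma n$, which factors through $\Spec H^0(\mdl S', \O_{\mdl S'}) = \Spec \Lambda$ since $\MGamma n$ is affine and $\mdl S'$ is connected, proper and flat. Choosing the level-$n$ structure on $\mdl J$ so that it reduces to the one used for $J$ makes this classifying map compatible with the base change $\Lambda \to k$, whence $\mdl E \otimes_\Lambda k \simeq E$. The identification $\mdl J \simeq (\mdl E \times_\Lambda \mdl S')/G$ then reduces to a presentation of $J$, and comparing it with the given presentation $J = (E \times_k S')/G$ forces $\rho \otimes_\Lambda k = \rho_0$; the delicate point is that this comparison matches the actions and not merely the abstract curves, which I would secure using the rigidity of level structures (\cite[Corollary 2.7.3]{KM}) already exploited in the proof of \proref{prop-global-structure}.
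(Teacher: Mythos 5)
Your proposal is correct and mirrors the paper's proof step for step: the sufficiency direction forms $\mdl J=(\mdl E\times_\Lambda \mdl S')/G$ over the unique lift of the Galois covering, and the necessity direction lifts the covering, deduces splitness of $\mdl J[n]\times_{\mdl S}\mdl S'$ from the \'etale equivalence of categories, and applies \proref{prop-global-structure} to produce $(\mdl E,\rho)$. Your closing step is the same rigidity mechanism the paper makes explicit --- it pins down the reduction of $\rho$ by noting that $\rho$ is determined by its action on $E[n]$ (the \'etale equivalence together with the injectivity of restriction to $n$-torsion for $n\ge 3$, \cite[Corollary 2.7.2]{KM}) and that the class of the twist $J[n]$ in $H^1(G,\Aut(J[n])(S'))\simeq\Hom(G,\Aut(E[n])(S'))$ is exactly $\rho_0$ --- so your level-structure bookkeeping for $\mdl E\otimes_\Lambda k\simeq E$ is a harmless elaboration of the same argument (and both you and the paper silently use that the cohomology class determines the homomorphism only up to conjugacy, which is fixed by renormalizing the identification $\mdl E\otimes_\Lambda k\simeq E$).
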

\begin{proof}
To show sufficiency is easy. The covering $S' \to S$ lifts uniquely to $\mdl S' \to \mdl S$ which is again Galois with group $G$.
If a lifting $\mdl E$ of $E$ with the prescribed properties exists, simply put $\mdl J = (\mdl E \times_\Lambda \mdl S') /G$. This quotient will exist in the category of schemes because $G$ is finite.

In order to show necessity, assume that we have a lifting $\mdl J \to \mdl S$. Like before, we also have the unique lifting $\mdl S' \to \mdl S$ of the Galois covering.
Observe that $\mdl J[n] \times_{\mdl S} \mdl S'$ is split, since $\mdl J[n]$ is a finite \'etale group scheme and the reduction is split by assumption.
Using  \proref{prop-global-structure}, we find that $\mdl J \simeq (\mdl E \times_\Lambda \mdl S') / G$, where the action of $G$ on $\mdl E$ is denoted by $\rho$.
We claim that $\rho$ lifts the action $\rho_0$:

Consider the induced action of $\rho$ on $\mdl E[n]$ for some integer $n$. The categories of \'etale group schemes over $k$ and $R$ are equivalent, hence $\rho$ is determined by its action on the reduction $E[n]$.

For $n \ge 3$ we know that the group homomorphisms, given by restricting the automorphism group of an elliptic scheme to its $n$-torsion is injective \cite[Corollary 2.7.2]{KM}.
However the isomorphism type of $J[n]$ allows to read of the action of $G$ on $J[n]$, for it is given by a class in
$$ H^1(G, \Aut(J[n])(S')) \simeq \Hom(G,  \Aut(E[n])(S')) $$
and the element of the latter group which corresponds to $J[n]$ is just $\rho_0$.
Hence the restriction of $\rho$ to the reduction has to be $\rho_0$.
\end{proof}

\subsection{Non-liftable elliptic surfaces}\label{non-liftable-elliptic-surfaces}

We postpone the development of the general theory at this point to give some specific examples of Jacobian elliptic fibre bundles which are non-liftable.

\subsubsection*{Characteristic three}

For the first example, let $k$ be an algebraically closed field of characteristic three, and let $E$ be an elliptic curve over $k$ with $j$-invariant 0. By \cite[Appendix A, Proposition 1.2]{S1} the automorphism group $G$ of $E$ is a semidirect product $\Z/3\Z \rtimes \Z/4\Z$ where $\Z/4\Z$ acts on $\Z/3\Z$ in the unique non-trivial way.

As we shall see later on, there exists a smooth curve $C$ over $k$ such that there is a surjection $\pi_1(C) \twoheadrightarrow G$. Denote by $C' \to C$ the associated finite \'etale Galois cover. Now we set
$$ J = (E \times_k C')/G, $$
where the action of $G$ on $E$ is the action of the automorphism group.

\begin{lemma}\label{non-lift-autos}
Let the characterstic of $k$ be three, and
let $\Lambda$ be in $\Alg_W$. If for an elliptic curve $\mdl E$ over $\Lambda$ the order of the automorphism group of $\mdl E$ is greater than six, it follows $3 \cdot \Lambda = 0$.
\end{lemma}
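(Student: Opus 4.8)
The plan is to exploit that an automorphism group of order larger than six in characteristic three must contain a ``wild'' automorphism of order three acting trivially on the Lie algebra, and then to show that such an automorphism cannot coexist with $3 \neq 0$.

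First I would control $\Aut(\mdl{E})$ by comparison with the special fibre $E = \mdl{E} \otimes_\Lambda k$. Since $p = 3$, the integer $n = 4$ is prime to $p$, so $\mdl{E}[4]$ is finite \'etale over $\Lambda$; by the equivalence of categories of \'etale schemes recalled in the preliminaries (the theorem of \cite{sga1}), an automorphism of $\mdl{E}[4]$ is determined by its reduction, and combined with the rigidity embedding $\Aut(\mdl{E}) \hookrightarrow \Aut(\mdl{E}[4])$ of \cite[Corollary 2.7.2]{KM} this shows that the reduction map $\Aut(\mdl{E}) \to \Aut(E)$ is injective. In characteristic three one has $\#\Aut(E) = 2$ if $j(E) \neq 0$, and $\#\Aut(E) = 12$ with $\Aut(E) \cong \Z/3\Z \rtimes \Z/4\Z$ if $j(E) = 0$, by \cite[Appendix A, Proposition 1.2]{S1}. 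Hence $\#\Aut(\mdl{E}) > 6$ forces $\Aut(\mdl{E}) \cong \Z/3\Z \rtimes \Z/4\Z$; in particular $\mdl{E}$ carries an automorphism $\sigma$ of order three and an automorphism $\tau$ of order four with $\tau \sigma \tau^{-1} = \sigma^{-1}$.

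Next I would analyse the character $\chi \colon \Aut(\mdl{E}) \to \Lambda^\times$ describing the action on the free rank-one module $\Lie(\mdl{E}/\Lambda)$. Because $\Lambda^\times$ is abelian and $\sigma = [\tau,\sigma] = \tau \sigma \tau^{-1} \sigma^{-1}$ lies in the commutator subgroup, it follows that $\chi(\sigma) = 1$. I expect this to be the main obstacle: the naive relation $\chi(\sigma)^3 = 1$ does \emph{not} suffice, since over a ring such as $W/9W$ the element $1 + 3$ is a nontrivial cube root of unity congruent to $1$; it is essential to use the conjugation relation with the order-four automorphism $\tau$ in order to pin down $\chi(\sigma) = 1$.

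Finally, since the residue characteristic is three, $2$ is a unit in $\Lambda$, and over the local ring $\Lambda$ the curve $\mdl{E}$ has a Weierstrass model $y^2 = x^3 + a_2 x^2 + a_4 x + a_6$. Automorphisms fixing the origin are the coordinate transformations $(u,r,s,t)$, and $\chi(\sigma) = 1$ translates into $u = 1$ for $\sigma$. The standard transformation formulas then force $s = t = 0$ (using $2 \in \Lambda^\times$) and yield $3r = 0$ from the coefficient of $x^2$, so that $\sigma$ is the translation $(x,y) \mapsto (x+r,y)$. Since reduction is injective, $\sigma$ reduces to a nontrivial automorphism of $E$, whence the reduction $\bar r \in k$ of $r$ is nonzero and therefore $r \in \Lambda^\times$. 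Multiplying $3r = 0$ by $r^{-1}$ gives $3 = 0$ in $\Lambda$, as claimed.
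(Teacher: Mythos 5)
Your proof is correct, and while it terminates in exactly the same computation as the paper's proof --- the coefficient relation $a_2 = a_2 + 3r$ giving $3r = 0$, with $r$ a unit because the reduction map on automorphisms is injective --- you arrive at the crucial automorphism $x \mapsto x + r$ by a genuinely different route. The paper stays inside the Weierstrass formalism: after forcing $s = t = 0$ it appeals to ``standard arguments'' showing $u^4 = 1$ or $u^6 = 1$, and deduces by counting that a group of order greater than six must contain an element with $u = 1$. You instead first establish injectivity of $\Aut(\mdl E) \to \Aut(E)$ via rigidity on $\mdl E[4]$ together with the \'etale equivalence from the preliminaries, identify $\Aut(\mdl E)$ with the full group $\Z/3\Z \rtimes \Z/4\Z$ of the special fibre, and then observe that the order-three element $\sigma$ is a commutator (since $\tau\sigma\tau^{-1} = \sigma^{-1}$ gives $[\tau,\sigma] = \sigma^{-2} = \sigma$), so its character on the rank-one module $\Lie(\mdl E/\Lambda)$ is exactly $1$, i.e.\ $u = 1$ on the nose. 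This buys rigor precisely where the paper is tersest: over an Artinian base the relation $u^3 = 1$ does not imply $u = 1$ (your example $4 \in \Z/9\Z$ with $4^3 = 64 \equiv 1$ is apt), and more generally the set of units satisfying $u^6 = 1$ need not have order at most six when the residue characteristic is three, so the paper's counting step requires some care to justify; your commutator argument sidesteps this issue entirely. What the paper's route buys in exchange is brevity and independence from the classification of $\Aut(E)$ in characteristic three --- though since that classification (from \cite[Appendix A, Proposition 1.2]{S1}) is already invoked in the construction of $J$ immediately above the lemma, your use of it costs nothing. One minor remark: your derivation of $s = t = 0$ from preservation of the short Weierstrass form ($0 = 2s$ and $0 = 2t$, using $2 \in \Lambda^\times$) is valid for arbitrary $u$, so the order in which you pin down $u$ and $(s,t)$ is immaterial.
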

\begin{proof}
Assume by contradiction, that the order of $\Aut_0(\mdl E)$ is greater than six.
Since two is a unit, there is a Weierstra\ss{} equation for $\mdl E$ of the following form:
$$ y^2 = x^3 + a_2x^2 + a_4x + a_6 $$
Admissible transformations look like $x \mapsto u^2x + r$ and $y \mapsto u^3y + u^2sx + t$.
The specific form of the equation implies $t = 0$ and $s = 0$.
Standard arguments show that either $u^4 = 1$ or $u^6 = 1$. Thus, an automorphism group of order greater than six would have to contain an element of the form $x \mapsto x + r$.

We get an equation $a_2 = a_2 + 3r$, which implies $3r = 0$. But $r$ has to be a unit, for otherwise the reduction map would not be injective on the automorphism group. Thus $3 = 0$ follows.
\end{proof}

Now we get as a direct consequence of \corref{cor-obstructions-Jacobian-case}:

\begin{prop}\label{pro-non-lift-jacobian-3}
The elliptic bundle $J$ can be lifted (as Jacobian fibration) only over rings in which $3 = 0$ holds.
\end{prop}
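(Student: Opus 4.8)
The plan is to combine the structural criterion of \corref{cor-obstructions-Jacobian-case} with the order bound of \lemref{non-lift-autos}; the decisive input is purely numerical, namely that the automorphism group $G = \Z/3\Z \rtimes \Z/4\Z$ has order twelve, which strictly exceeds six.

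First I would apply \corref{cor-obstructions-Jacobian-case} to $J$. By construction $J = (E \times_k C')/G$ with $G$ acting on $E$ as its full automorphism group, so the action $\rho_0 \colon G \to \Aut_0(E)$ is an isomorphism, and in particular faithful. The corollary then asserts that a Jacobian lifting $\mdl J \to \mdl S$ over $\Lambda$ exists if and only if there is a lifting $\mdl E$ of $E$ over $\Lambda$ carrying an extension of the action $\rho_0$. So it suffices to analyze when such a compatible lift of the action can exist.

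Next I would observe that any extension $\rho \colon G \to \Aut_0(\mdl E)$ of $\rho_0$ is automatically faithful: composing $\rho$ with the reduction homomorphism $\Aut_0(\mdl E) \to \Aut_0(E)$ recovers $\rho_0$, which is injective, so $\rho$ has trivial kernel. Consequently $\Aut_0(\mdl E)$ contains a subgroup isomorphic to $G$, and therefore $|\Aut_0(\mdl E)| \ge 12 > 6$. This is precisely the hypothesis needed to invoke the rigidity lemma.

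Finally, \lemref{non-lift-autos} applied to the lift $\mdl E$ forces $3 \cdot \Lambda = 0$, which is the assertion. I do not expect any genuine obstacle in this step: the two cited results already carry all the weight, and the only point to verify is that faithfulness of $\rho_0$ is inherited by the lift, producing the order bound that triggers the lemma. The conceptually substantive work — the classification of Jacobian liftings via the automorphism action, and the characteristic-three rigidity preventing elliptic curves from acquiring automorphism groups of order greater than six away from $3 = 0$ — has been discharged upstream in \corref{cor-obstructions-Jacobian-case} and \lemref{non-lift-autos} respectively, so the present proof is essentially a one-line synthesis of the two.
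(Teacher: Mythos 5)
Your proof is correct and matches the paper's intended argument exactly: the paper states \proref{pro-non-lift-jacobian-3} as a direct consequence of \corref{cor-obstructions-Jacobian-case} together with \lemref{non-lift-autos}, with no further written proof. Your only added step---that any extension $\rho$ of the faithful action $\rho_0$ is again faithful because reduction recovers $\rho_0$, so $|\Aut(\mdl E)| \ge 12 > 6$---is precisely the implicit synthesis the paper leaves to the reader, and it is sound.
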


\subsubsection*{Characteristic two}
Now assume $k$ is a field of characteristic two.
Given an elliptic curve $E$ over $k$ with $j$-invariant 0, the group of automorphisms will be a semidirect product $G = Q \rtimes \Z/3\Z$, where $Q$ is the quaternion group. Similarly to \lemref{non-lift-autos}, one shows that neither $G$ nor $Q$ can lift to rings with $2 \neq 0$.
Now assume the existence of two curves $C_G$ and $C_Q$ together with  \'etale Galois covers $C_G' \to C_G$ with group $G$ and $C_Q' \to C_Q$ of group $Q$ respectively.

This gives rise to two Jacobian bundles: $J_G \simeq (C_G' \times E) / G$ and $J_Q \simeq (C_Q' \times E) / Q$. Again by \corref{cor-obstructions-Jacobian-case} it follows:

\begin{prop}\label{pro-non-lift-jacobian-2}
The elliptic bundles $J_G$ and $J_Q$ can be lifted (as Jacobian fibrations) only over rings in which $2 = 0$ holds.
\end{prop}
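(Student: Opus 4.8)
The plan is to deduce \proref{pro-non-lift-jacobian-2} from \corref{cor-obstructions-Jacobian-case}, exactly as \proref{pro-non-lift-jacobian-3} was deduced in characteristic three, the only new input being a characteristic-two replacement for \lemref{non-lift-autos}. First I would unwind the setup: by construction $Q$ acts on $E$ through the quaternion subgroup of $\Aut_k(E) = Q \rtimes \Z/3\Z$, and $G$ acts through the whole automorphism group, so the representations $\rho_0$ are the tautological inclusions. By \corref{cor-obstructions-Jacobian-case}, a Jacobian lifting of $J_Q$ (resp.\ $J_G$) over some $\Lambda \in \Alg_W$ exists if and only if $E$ admits a lifting $\mdl E / \Lambda$ to which $\rho_0$ extends; since $G \supseteq Q$, in both cases a lifting produces an embedding $Q \hookrightarrow \Aut(\mdl E)$ of abstract groups reducing to $\rho_0$. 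Thus everything reduces to the claim: \emph{if the quaternion group embeds into $\Aut(\mdl E)$ for an elliptic curve $\mdl E / \Lambda$, then $2 \cdot \Lambda = 0$.}

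To prove this claim I would avoid Weierstra{\ss} gymnastics (which are awkward in characteristic two, where one cannot clear $a_1$ and $a_3$) and argue instead with the character by which automorphisms act on the invariant differential. Let $\omega$ be a generator of the free rank-one $\Lambda$-module $e^*\Omega^1_{\mdl E/\Lambda}$, and let $\chi \colon \Aut(\mdl E) \to \Lambda^*$ be the homomorphism defined by $\phi^*\omega = \chi(\phi)\,\omega$. Fix quaternion generators $i, j \in Q \subseteq \Aut(\mdl E)$, so that $i j i^{-1} = j^{-1}$ and $j^2$ is the central involution of $Q$. Applying $\chi$ to the conjugation relation and using that $\Lambda^*$ is commutative gives $\chi(j) = \chi(j)^{-1}$, hence $\chi(j)^2 = 1$. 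On the other hand $\chi(j)^2 = \chi(j^2) = \chi([-1]_{\mdl E}) = -1$, because the invariant differential is anti-invariant under negation. Comparing the two evaluations yields $1 = -1$ in $\Lambda^*$, that is $2 = 0$, which is the claim; feeding this back through \corref{cor-obstructions-Jacobian-case} proves the Proposition.

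The one step that requires care, and where characteristic two genuinely enters, is the identification $j^2 = [-1]_{\mdl E}$: I must know that the central order-two element of the embedded copy of $Q$ is the negation automorphism of $\mdl E$, so that $\chi$ evaluates to $-1$ on it. This follows because $\Aut(\mdl E) \hookrightarrow \Aut_k(E)$ is injective (rigidity, \cite[Corollary 2.7.2]{KM}): the element $j^2$ reduces to the unique involution of $\Aut_k(E)$, namely $\overline{[-1]}$, and $[-1]_{\mdl E}$ reduces to the same involution, so the two must coincide. I expect this reduction-injectivity bookkeeping --- rather than any hard computation --- to be the main obstacle, since it is precisely what ties the abstract non-abelian relations of $Q$ to the geometry of $\mdl E$ and forces the collapse $1 = -1$.
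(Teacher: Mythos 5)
Your proposal is correct, and its outer skeleton is exactly the paper's: invoke \corref{cor-obstructions-Jacobian-case} to convert a Jacobian lifting of $J_G$ or $J_Q$ over $\Lambda$ into a lifting $\mdl E/\Lambda$ of $E$ carrying an action extending $\rho_0$, and then show that such an action forces $2\cdot\Lambda = 0$. Where you genuinely diverge is in the key lemma. The paper disposes of it with the remark that ``similarly to \lemref{non-lift-autos}, one shows that neither $G$ nor $Q$ can lift to rings with $2 \neq 0$,'' i.e.\ it envisages a Weierstra\ss{}-equation analysis parallel to the characteristic-three computation; note that the char-$3$ computation in \lemref{non-lift-autos} hinges on $2$ being a unit (to reach the form $y^2 = x^3 + a_2x^2 + a_4x + a_6$), and in residue characteristic two one can no longer complete the square (only $3$ is a unit, so one can depress the cubic but must keep $a_1, a_3$), so the ``similar'' argument the paper gestures at would in fact be messier --- exactly the awkwardness you anticipated. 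Your replacement is coordinate-free: with $\chi \colon \Aut(\mdl E) \to \Lambda^*$ the character on $e^*\Omega^1_{\mdl E/\Lambda}$, the quaternion relation $iji^{-1} = j^{-1}$ gives $\chi(j)^2 = 1$ since $\Lambda^*$ is abelian, while $j^2 = [-1]_{\mdl E}$ gives $\chi(j)^2 = \chi([-1]) = -1$, whence $1 = -1$ in $\Lambda$; equivalently, $[-1] = [i,j]$ is a commutator of $Q$, so every character into an abelian group kills it, yet it acts as $-1$ on the invariant differential. Your bookkeeping for the one delicate step is also sound: the identification $j^2 = [-1]_{\mdl E}$ follows from injectivity of the reduction map $\Aut(\mdl E) \to \Aut_k(E)$ --- which, as in the proof of \corref{cor-obstructions-Jacobian-case}, comes from \cite[Corollary 2.7.2]{KM} applied to $\mdl E[n]$ for $n \ge 3$ prime to $p$ together with the equivalence of \'etale group schemes over $\Lambda$ and over $k$, and which the paper itself already uses inside \lemref{non-lift-autos} --- combined with the fact that the unique involution of $Q$ (indeed of $\Aut_k(E)$) is $\overline{[-1]}$. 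What each approach buys: the paper's stays in the explicit Weierstra\ss{} framework already set up for characteristic three, while yours isolates the true source of the obstruction (the non-commutativity of $Q$), works uniformly for any subgroup of $\Aut(\mdl E)$ containing $Q$, and avoids all characteristic-two normal-form subtleties.
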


To finish this discussion, we have to establish the existence of curves with specific \'etale Galois coverings.

To this end, we use a powerful theory which is developed in \cite{PaSt}. First we fix some group theoretic invariants.
Let $G$ be a finite group with the property that the maximal $p$-Sylow subgroup $P$ is normal. We set $H = G/P$.
Then one can write $G$ as a semidirect product $P \rtimes H$.

We denote by $\cat P$ the maximal elementary abelian quotient of $P$, and consider it as a $\F_p$-vector space, which is possible since it is a $p$-torsion group. Let $Z(H)$ be the set of irreducible characters with values in $k$, and let $V_\chi$ be an irreducible $k$-representation of $H$ with character $\chi$.
On $P$, we have an $H$ action coming from the structure of the semidirect product. This induces an $H$-representation on $\cat P$.
Since $H$ is of order prime to $p$, this representation is semisimple, and we write
 $$\cat P \otimes_{\F_p} k \simeq \oplus V_\chi^{m_\chi}.$$
The $m_\chi$ are thus numerical invariants of the group $G$.

\begin{nnthm}[Theorem 7.4 \cite{PaSt}]
Let $G$ be a group having a normal $p$-Sylow subgroup $P$. Suppose $H = G/P$ is abelian. Then there exists a curve of genus $g \ge 2$ having an \'etale Galois covering with group $G$ if the minimal number of generators of $H$ is less than or equal than to $2g$, and $m_\chi \le g -1$ holds for every $\chi \in Z(H)$. 
\end{nnthm}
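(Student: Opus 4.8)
The plan is to translate the statement into the construction of a surjection from an \'etale fundamental group and then to build it in two stages, realizing the prime-to-$p$ quotient $H$ first and the normal $p$-group $P$ on top of it. An \'etale Galois covering of $C$ with group $G$ is the same datum as a continuous surjection $\pi_1(C)\twoheadrightarrow G$. Since $H=G/P$ is abelian of order prime to $p$ and needs at most $2g$ generators, and the maximal prime-to-$p$ abelian quotient of $\pi_1(C)$ for a genus-$g$ curve is $(\hat\Z')^{2g}$, free of rank $2g$, I can realize $H$ as such a quotient; equivalently I construct a curve $D$ carrying a free $H$-action with $D/H=C$ of genus $g$, giving the \'etale $H$-cover $\pi\colon D\to C$. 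Because only existence is asserted, I am free to choose $D$ to be \emph{ordinary} (ordinarity being generic in the relevant moduli of $H$-curves); Riemann--Hurwitz then gives $g_D=1+|H|(g-1)$.

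The heart is a representation-theoretic computation of the $k[H]$-module governing elementary abelian $p$-covers of $D$, namely the \'etale cohomology $H^1(D,\F_p)\otimes_{\F_p}k$. Since $\pi$ is \'etale Galois with abelian group $H$, the isotypic decomposition $\pi_*\O_D\simeq\bigoplus_\chi \mathcal L_\chi\otimes V_\chi$ expresses each $\mathcal L_\chi$ as the degree-zero torsion line bundle attached to $\chi$. Riemann--Roch on $C$ gives $\dim H^1(C,\mathcal L_\chi)=g$ for the trivial character and $g-1$ for every nontrivial $\chi$, so $V_\chi$ occurs in $H^1(D,\O_D)$ with multiplicity $g$ resp.\ $g-1$; this is the Chevalley--Weil formula for a free action. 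Ordinarity of $D$ transports this to characteristic $p$: the Hasse--Witt/Frobenius map is bijective and $H$-equivariant, whence $H^1(D,\F_p)\otimes_{\F_p}k\cong H^1(D,\O_D)$ as $k[H]$-modules, with the same multiplicities. Comparing with $\cat P\otimes_{\F_p}k\simeq\oplus V_\chi^{m_\chi}$, the hypothesis $m_\chi\le g-1$ for all $\chi\in Z(H)$ ensures that the multiplicity of each $V_\chi$ in $H^1(D,\F_p)\otimes_{\F_p}k$ dominates $m_\chi$, so an $H$-equivariant surjection $H^1(D,\F_p)\twoheadrightarrow\cat P$ of $\F_p[H]$-modules exists by semisimplicity, i.e.\ an $H$-equivariant surjection $\pi_1(D)\twoheadrightarrow\cat P$.

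It then remains to lift $\cat P$ to $P$ and to assemble $G$. Here ordinarity does double duty: for a proper ordinary curve one checks $H^2(D,\F_p)=\Coker(F-1\mid H^1(D,\O_D))=0$, so the maximal pro-$p$ quotient of $\pi_1(D)$ is free pro-$p$ (of rank equal to the $p$-rank $g_D$); consequently any surjection onto the Frattini quotient $\cat P$ lifts to a surjection onto $P$, and freeness lets me choose the lift $H$-equivariantly. Combining the $H$-equivariant surjection $\pi_1(D)\twoheadrightarrow P$ with $\pi_1(C)\twoheadrightarrow H$ yields a surjection $\pi_1(C)\twoheadrightarrow P\rtimes H=G$ realizing the prescribed action, hence the desired cover. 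I expect the main obstacle to be exactly this last stage together with its ordinarity input: one must control the $H$-module structure of the \emph{\'etale} ($p$-rank) part of the cohomology rather than of coherent $H^1$, and then solve the resulting equivariant embedding problem so that the total group is genuinely the semidirect product $G$ and not some other extension of $H$ by $P$. The freeness of the pro-$p$ quotient and the coprimality of $|H|$ to $p$ are what make both steps go through.
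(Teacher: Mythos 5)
You should first note that the paper you are working from contains no proof of this statement at all: it is quoted verbatim from Pacheco--Stevenson \cite{PaSt}, so the comparison is with their argument. Your skeleton --- realize $H$ by an \'etale abelian prime-to-$p$ cover $D \to C$, control the $k[H]$-module $H^1(D,\F_p)\otimes_{\F_p} k$ via the Chevalley--Weil decomposition $\pi_*\O_D \simeq \bigoplus_\chi \sheaf L_\chi \otimes V_\chi$ together with ordinarity (these multiplicities are the generalized Hasse--Witt invariants $\gamma_\chi$), and then solve the resulting $p$-embedding problem using semisimplicity of $\F_p[H]$, coprimality, and freeness of the maximal pro-$p$ quotient --- is indeed the right line of attack and matches the structure of the source. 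The Riemann--Roch computation ($\gamma_\chi = g-1$ for $\chi$ nontrivial, $g$ for $\chi$ trivial, when $D$ is ordinary), the $H$-equivariance of the $\F_p$-structure under Frobenius, and the observation that multiplicity domination orbitwise yields an $\F_p[H]$-surjection onto $\cat P$ are all correct. But two steps you lean on are genuine gaps, not routine facts.

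First, ``ordinarity being generic in the relevant moduli of $H$-curves'' is precisely the hard input of the theorem, not a freebie. What you need is an \'etale $H$-cover with $\gamma_\chi = g-1$ \emph{simultaneously} for all nontrivial $\chi$; this is the sharp upper bound for \'etale covers (Nakajima), and proving that it is attained --- i.e.\ that all new parts of the Prym decomposition can be made ordinary at once --- is a theorem in its own right, to which a substantial portion of \cite{PaSt} (computations of generalized Hasse--Witt invariants, degeneration arguments) is devoted. Ordinarity of $D$ is a condition on the cover, not on $C$ alone, and without an argument or citation the locus could a priori be empty in the cover moduli. Second, the step ``freeness lets me choose the lift $H$-equivariantly'' is not literally available: $H$ acts on $\pi_1(D)$ only as an \emph{outer} action, since the extension $1 \to \pi_1(D) \to \pi_1(C) \to H \to 1$ need not split, so there is no $H$-action on the free pro-$p$ quotient for your lift to be equivariant with respect to. The correct route is the embedding-problem formalism: for the split problem with elementary abelian kernel $\cat P$, inflation--restriction plus $H^i(H,\cat P)=0$ for $i=1,2$ (coprimality) identifies $H^1(\pi_1(C),\cat P)$ with $\Hom_H(\pi_1(D)^{ab}\otimes\F_p,\cat P)$, so your equivariant surjection extends to $\pi_1(C) \twoheadrightarrow \cat P \rtimes H$; one then lifts from $\cat P \rtimes H$ to $G$ through the kernel $\Phi(P)$ in elementary abelian steps, where weak solvability follows from $\mathrm{cd}_p(\pi_1(C)) \le 1$ and surjectivity of any weak solution is automatic because $\Phi(P) \le \Phi(G)$. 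Two smaller corrections: $H^2(D,\F_p) = \Coker\bigl(F-1 \mid H^1(D,\O_D)\bigr) = 0$ holds for \emph{every} smooth projective curve over algebraically closed $k$ (Lang-type surjectivity of the $p$-semilinear $F-1$), not just ordinary ones, and Shafarevich's freeness of the maximal pro-$p$ quotient likewise holds in general, with rank equal to the $p$-rank; ordinarity enters only to maximize the invariants $\gamma_\chi$.
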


In the characteristic three example we had $G = \Z/3\Z \rtimes \Z/4\Z$. The minimal number of generators of $H = \Z/4\Z$ is one, the representation of $H$ on $\cat P$ is obviously irreducible and given by the sign involution. Thus the assumptions of the theorem are satisfied for some curve of genus $2$.

In the characteristic two examples we also have that the maximal $p$-Sylow group is normal. Thus for $g$ sufficiently large, we will find curves with the required coverings.

Later on, we will prove that $J$, $J_G$ and $J_Q$ are non-liftable even if we drop any additional requirements on the liftings.
This is done in two steps:
First, we prove that a lifting in the category of elliptic fibre bundles exists if and only if a Jacobian lifting exists. 
This is the content of Section \ref{sec-deformation-of-torsors}.
The final step is to see that \emph{every} lifting of an elliptic fibre bundle of Kodaira dimension one is an elliptic fibre bundle. This will be done in Section \ref{deformations-kodaira-one}.

\section{Deformations of elliptic torsors}\label{sec-deformation-of-torsors}
We start with some general theory on deformations of torsors under smooth commutative group schemes.
This mainly rephrases \cite[Remarque 9.1.9]{SGA3.2}.

We work in the following setting:
Fix a small extension of algebras in $\Alg_W$
$$ 0 \to I \to \Lambda \to \Lambda_0 \to 0. $$
Let $\mdl S$ be a flat $\Lambda$-scheme and set $\mdl S_0 = \mdl S \otimes_{\Lambda} \Lambda_0$.
We have a closed immersion $i \colon \mdl S_0 \to \mdl S$.
Let $\mdl G$ be a smooth commutative $\mdl S$-group scheme and set $\mdl G_0 = \mdl G \times_{\mdl S}\mdl S_0$.

For a group functor $\mdl F$ on the category of $\mdl S_0$-schemes, we defined the pushforward functor $i_*\mdl F$ on $\mdl S$-schemes by sending a $\mdl S$-scheme $\mdl T$ to $\mdl F(\mdl T \times_{\mdl S} \mdl S_0)$.
There is a natural specialization map  $s \colon \mdl G \to i_*\mdl G_0$ of group functors.
To investigate its kernel, we introduce a coherent sheaf on $\mdl S_0$:
 $$\sheaf L = \Lie(\mdl G_0/\mdl S_0) \otimes_{\O_{\mdl S_0}}  I.$$
We have a sequence of group functors on $\mdl S$
\begin{equation}\label{specialization}
 0 \to i_* \sheaf L \to \mdl G \xrightarrow{s} i_*\mdl G_0 \to 0,
\end{equation}
whose exactness follows from the smoothness of $\mdl G_0$, as can be seen affine locally.
Taking \'etale cohomology of (\ref{specialization}), we obtain the fundamental long exact sequence
  \begin{multline}\label{fundamental-sequence}
   0 \to i_*\mdl G_0(\mdl S)/s(\mdl G(\mdl S)) \to H^1(\mdl S, i_* \sheaf L)  \to H^1(\mdl S, \mdl G)  \xrightarrow{s} \\ \to H^1(\mdl S, i_*\mdl G_0) \to H^2(\mdl S, i_* \sheaf L) .
  \end{multline}  
The sheaves $\mdl L$ and $i_*\mdl L$ are coherent modules. We find $$ H^i(\mdl S, i_* \sheaf L) \simeq H^i(\mdl S_0, \sheaf L) \simeq H^i_{zar}(\mdl S, \Lie(G_0/S_0) ) \otimes I. $$ 
Moreover, we claim that the group $H^1(\mdl S, i_*\mdl G_0)$ is isomorphic to $H^1(\mdl S_0,\mdl G_0)$:
By the Leray spectral sequence we get an exact sequence of \'etale cohomology groups
$$ 0 \to H^1(\mdl S, i_*\mdl G_0) \to H^1(\mdl S_0,\mdl G_0) \to H^0(\mdl S_0, R^1 i_*\mdl G_0), $$
with vanishing last term:
It is enough to show that $(R^1 i_*\mdl G_0)_x = 0$ for every closed point $x$ of $\mdl S$; i.e. of $\mdl S_0$.
By \cite[Theorem 1.15]{Mil1} it follows that $(R^1 i_*\mdl G_0)_x \simeq H^1(\Spec(\O_{{\mdl S_0},x}^\wedge),\mdl G_0)$ and the last group vanishes since $\Spec(\O_{{\mdl S_0},x}^\wedge)$ has only  the trivial \'etale covering. Here, we make use of the assumption that $k$ is algebarically closed.

In our situation, this means the following: Let $J / C$ be a Jacobian elliptic fibre bundle over a curve $C$ over $k$.
Let $\mdl J_0 / \mdl C_0$ be a Jacobian lifting of $J/C$ over $\Lambda_0 \in \Alg_W$. 

\begin{prop}\label{deforming-elliptic-torsors}
 Let $\mdl J \to \mdl C$ be a Jacobian lifting of $\mdl J_0/\mdl C_0$ to $\Lambda$. Then every $\mdl J_0$-torsor $\mdl X_0$ over $\mdl C_0$ lifts to a $\mdl J$-torsor over $\mdl C$.
Furthermore, let $m$ be an integer prime to $p$. Then the restriction map
\[
 H^1(\mdl C, \mdl J)[m] \xrightarrow{\sim} H^1(\mdl C_0, \mdl J_0)[m]
\]
is bijective. This means that liftings of torsors are unique up to $p$-torsion.
\end{prop}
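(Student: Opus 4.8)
The plan is to apply the fundamental sequence (\ref{fundamental-sequence}) to the situation $\mdl S = \mdl C$ and $\mdl G = \mdl J$. This is legitimate, since a Jacobian elliptic fibre bundle is a smooth commutative $\mdl C$-group scheme, and for such a group scheme torsors are classified by its first \'etale cohomology. Using the identification $H^1(\mdl C, i_*\mdl J_0) \simeq H^1(\mdl C_0, \mdl J_0)$ established above, write $s \colon H^1(\mdl C, \mdl J) \to H^1(\mdl C_0, \mdl J_0)$ for the induced specialization map. The first assertion is then exactly the surjectivity of $s$, and the second is the bijectivity of its restriction to $m$-torsion.

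First I would prove surjectivity. From (\ref{fundamental-sequence}) the cokernel of $s$ injects into $H^2(\mdl C, i_*\sheaf L)$, which by the identifications above is the second Zariski cohomology of a coherent sheaf on $\mdl C$, tensored with $I$. Since $\Lambda$ is Artinian, $\mdl C$ has the same underlying topological space as its special fibre, which is a one-dimensional Noetherian scheme; hence this coherent cohomology vanishes by Grothendieck's vanishing theorem. Therefore $s$ is surjective, and any $\mdl J_0$-torsor $\mdl X_0$, viewed through its class in $H^1(\mdl C_0, \mdl J_0)$, is the restriction of a class in $H^1(\mdl C, \mdl J)$, i.e.\ of a $\mdl J$-torsor over $\mdl C$.

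The heart of the matter is the torsion statement, and the key observation is that $\Ker(s)$ is killed by $p$. From (\ref{fundamental-sequence}), $\Ker(s)$ is a quotient of $H^1(\mdl C, i_*\sheaf L)$, and because the extension $0 \to I \to \Lambda \to \Lambda_0 \to 0$ is small we have $\mathfrak m_\Lambda I = 0$, so in particular $pI = 0$; as $\sheaf L = \Lie(\mdl J_0/\mdl C_0) \otimes I$ carries $I$ as a tensor factor, multiplication by $p$ annihilates $\sheaf L$ and hence all of its cohomology. Thus $\Ker(s)$ is a $p$-torsion group, and since $m$ is prime to $p$, multiplication by $m$ is an automorphism of it. Applying the snake lemma to multiplication by $m$ on the short exact sequence
\[
 0 \to \Ker(s) \to H^1(\mdl C, \mdl J) \xrightarrow{s} H^1(\mdl C_0, \mdl J_0) \to 0,
\]
which is exact by the surjectivity just shown, both $\Ker(s)[m]$ and $\Ker(s)/m\,\Ker(s)$ vanish, so the connecting homomorphism is zero and $s$ restricts to an isomorphism $H^1(\mdl C, \mdl J)[m] \xrightarrow{\sim} H^1(\mdl C_0, \mdl J_0)[m]$.

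I expect the only delicate point to be the second statement: once one spots that $\Ker(s)$ is annihilated by $p$, the torsion comparison is a purely formal application of the snake lemma, but without that observation the uniqueness ``up to $p$-torsion'' is not visible. Surjectivity, by contrast, is immediate from the dimension of the base. Everything rests on the fundamental sequence and the identification of its terms, which are already in hand.
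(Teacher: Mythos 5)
Your cohomological bookkeeping is sound and matches the paper where it overlaps: surjectivity of the specialization map via Grothendieck vanishing of $H^2$ of a coherent sheaf on the one-dimensional space underlying $\mdl C$, and the $m$-torsion comparison via the fact that $\Ker(s)$ is a quotient of $H^1(\mdl C_0, \Lie(\mdl J_0/\mdl C_0)) \otimes I$ and hence killed by $p$ (the paper compresses this into ``taking $m$-torsion in (\ref{fundamental-sequence})''; your snake-lemma argument is the same computation made explicit, and it is correct).

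There is, however, a genuine gap: you silently identify ``$\mdl J$-torsor over $\mdl C$'' with ``class in $H^1(\mdl C, \mdl J)$''. Etale (or fppf) $H^1$ classifies torsors only as \emph{sheaves}, and a sheaf-torsor under an elliptic scheme over a base need not be representable by a scheme. In the proposition's intended use --- proving formal smoothness of $\Fib_{X/C} \to \Jac_{J/C}$, whose values are honest flat $\Lambda$-schemes --- the lifting must be a scheme, so surjectivity of $s$ on cohomology classes is not by itself enough. This is exactly why the paper's proof has a middle step absent from your proposal: after lifting the class, it invokes Raynaud's Lemme XIII, which says that a \emph{torsion} class in $H^1(\mdl C, \mdl J)$ is representable, and then shows $H^1(\mdl C, \mdl J)$ is entirely torsion by induction on the length of $\Lambda$, using that $H^1(C, J)$ is torsion on the closed fibre and that the correction terms $H^1(\mdl C_0, \Lie(\mdl J_0/\mdl C_0)) \otimes I$ are annihilated by a power of $p$. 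Note that your own observation that $\Ker(s)$ is $p$-torsion is precisely the inductive ingredient, so the repair is short --- but as written your argument only produces a torsor in the category of sheaves, not the representable lifting the statement and its later applications require.
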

\begin{proof}
From the sequence (\ref{fundamental-sequence}) we know that the obstruction to lifting the cohomology class associated to
 $\mdl X_0$ lies inside $H^2(\mdl C_0, \Lie(\mdl J_0/\mdl C_0)) \otimes I$. Note that since $\Lie(\mdl J_0/\mdl C_0)$ is a coherent $\O_{\mdl C_0}$-module, we can compute its cohomology with respect to the Zariski topology.
Since $\mdl C_0$ is one-dimensional, this group is zero.

Once we have lifted the cohomology class, we have to answer the question, whether it is associated to a representable $\mdl J$-torsor.
By \cite[Lemme XIII]{Raynaud119} below, this will be the case if it is torsion.
We claim that $H^1(\mdl C, \mdl J)$ is torsion:
Since $H^1(C,J)$ is torsion, it is enough to show that $H^1(\mdl C_0, \Lie(\mdl J_0/\mdl C_0)) \otimes I$ is torsion, then the assertion will follow by induction. However the former group is a $\Lambda$-module, and $\Lambda$ itself is annihilated by some power of $p$.

The second statement follows now directly by taking $m$-torsion in (\ref{fundamental-sequence}).
\end{proof}

\begin{rem}
 In the case where the base is zero dimensional, one recovers the well known fact that the Tate-{\v{S}}afarevi{\v{c}} group of an elliptic curve over a complete local ring with algebraically closed residue field is zero, since the first cohomology of the Lie algebra vanishes.
\end{rem}

We want to rephrase \proref{deforming-elliptic-torsors} in the language of deformation functors.
For that purpose, we define two deformation functors associated to an elliptic fibre bundle $X \to C$ over $k$.

\begin{defn}\label{functors}
By a deformation of $X$ over some $\Lambda \in \Alg_W$, we mean a pair $(\mdl X, \epsilon)$, where $\mdl X$ is a flat scheme over $\Spec(\Lambda)$ and $\epsilon$ is an isomorphisms $\epsilon \colon \mdl X \otimes_\Lambda k \simeq X$. 
Let $\Def_{X} \colon \Alg_W \to (Sets)$ denote the functor, which sends $\Lambda \in \Alg_W$ to the set of isomorphism classes of deformations of $X/C$.

By a \emph{deformation of a fibration} $X/C$, we understand a deformation $(\mdl X, \epsilon)$ of $X$ together with a map $\mdl X \to \mdl C$, such that the isomorphism $\epsilon$ is in fact an isomorphism of $C$-schemes.
The functor of deformations of $X$ as fibration is denoted by $\Fib_{X/C} \colon \Alg_W \to (Sets)$.

Two deformations of $\mdl X/\mdl C$ and $\mdl X'/\mdl C$  are called \emph{isomorphic} if 
there exists an isomorphisms of deformations, which is also an isomorphism of $\mdl C$-schemes.

Let $(J/C, e_0)$ be a Jacobian elliptic fibre bundle. 
We define the functor $\Jac_{J/C}$ by sending $\Lambda \in \Alg_W$ to a pairs $(\mdl J/\mdl C, e)$, where $\mdl J/\mdl C$ is a deformation of $J/C$ and $e$ is a lift of $e_0$.
For an element $(\mdl J/\mdl C, e)$ of $\Jac_{J/C}$ notice that a different choice of $e$ leads to an isomorphic element of $\Jac_{J/C}$. Thus we view $\Jac_{J/C}$ as a subfunctor of $\Fib_{J/C}$, i.e., the subfunctor of those deformations admitting a section. 
\end{defn}

We get a natural map $\Fib_{X/C} \to \Jac_{J/C}$ as follows:
For a deformation $\mdl X / \mdl C$ (not necessarily having a section)
we consider the identity component of its Picard scheme.
Since $k$ is of characteristic $p$, we can always lift an appropriate $p$-th power of a relativly ample line bundle of $X \to C$.  Therefore the representabilty of $\Pic_{X/C}$ follows from:

\begin{nnthm}[Theorem 4.8 \cite{Picard}]
 Let $Z$ be a projective flat $S$-scheme, having integral geometric fibres.
Then $Pic_{Z/S}$ is representable by a separated $S$-scheme.
\end{nnthm}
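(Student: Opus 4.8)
The plan is to follow Grothendieck's construction of the relative Picard scheme via the Hilbert scheme and the Abel map, exploiting the integrality of the geometric fibres at two decisive points: to secure cohomological flatness in degree zero, and to force separatedness. First I would record that since $f \colon Z \to S$ is proper, flat and of finite presentation with geometrically integral fibres, one has $f_* \O_Z = \O_S$ universally, i.e. $f$ is cohomologically flat in degree $0$. This identifies the fppf sheaf $\Pic_{Z/S}$ with the sheafification of $T \mapsto \Pic(Z_T)$, and, more usefully, it guarantees that for a line bundle $\sheaf{L}$ on $Z_T$ the formation of $f_{T*}\sheaf{L}$ commutes with base change once $\sheaf{L}$ is twisted by a sufficiently high power of the relatively ample $\O_Z(1)$.

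Next I would fix $\O_Z(1)$ and consider the open subscheme $\operatorname{Div}_{Z/S} \subset \operatorname{Hilb}_{Z/S}$ parametrizing relative effective Cartier divisors, which is representable by Grothendieck's representability of the Hilbert scheme for projective morphisms. There is an Abel map $A \colon \operatorname{Div}_{Z/S} \to \Pic_{Z/S}$ sending a relative divisor $D$ to $\O_Z(D)$. The key geometric input is that on a geometrically integral projective fibre any line bundle of large degree has a nonzero section whose vanishing locus is an effective Cartier divisor; combined with cohomological flatness this shows that, after twisting by $\O_Z(n)$ for $n \gg 0$, every class is fppf-locally in the image of $A$, so on each piece of fixed Hilbert polynomial $A$ is surjective as a map of fppf sheaves.

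Then I would analyze the fibres of $A$: two relative divisors have the same image exactly when they are relatively linearly equivalent, so the fibre over a class $[\sheaf{L}]$ is the linear system $\P(f_{T*}\sheaf{L})$. Cohomological flatness makes $f_{T*}\sheaf{L}$ locally free of the expected rank with base-change-compatible formation, so these fibres assemble into a projective bundle. Representability of $\Pic_{Z/S}$ over each stratum then follows by exhibiting it as the quotient of (an open part of) $\operatorname{Div}_{Z/S}$ by this projective-bundle relation, which Grothendieck shows is an $S$-scheme; ranging over the twists and Hilbert polynomials patches these into a single $S$-scheme locally of finite presentation.

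Finally, separatedness would come from the valuative criterion: over a discrete valuation ring the problem reduces to showing that a line bundle on the generic fibre has at most one extension to $Z$, and this is where integrality is used again, since on a geometrically integral proper fibre a degree-zero bundle is trivial iff it has a nonzero section, so the triviality locus is closed and the unit section of $\Pic_{Z/S}$ is a closed immersion. The main obstacle is the representability of the quotient in the penultimate step: constructing $\Pic_{Z/S}$ as a scheme from the divisor/Hilbert scheme requires descending the projective-bundle equivalence, which is the technical heart of the FGA argument, whereas everything else is either the ample-twisting bookkeeping enabled by cohomological flatness or the comparatively soft valuative-criterion check for separatedness.
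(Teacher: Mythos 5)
This statement is imported from the literature: the paper itself gives no proof, simply citing Grothendieck's representability theorem as presented in \cite{Picard}. Your outline correctly reproduces the standard argument found in that source --- cohomological flatness in degree zero from the geometrically integral fibres, the Abel map from $\operatorname{Div}_{Z/S}$ with projective-space fibres $\P(f_{T*}\sheaf{L})$ after sufficient twisting, representability of the resulting quotient (the genuinely hard step, which you rightly identify and defer to the FGA machinery), and separatedness via the valuative criterion reduced to closedness of the fibrewise-trivial locus on integral fibres --- so it is essentially the same approach as the paper's cited reference.
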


Since $\mdl X/\mdl C$ is an elliptic fibre bundle, the Picard scheme will be smooth, and the identity component $\Pic^0_{\mdl X/\mdl C}$ is a smooth elliptic scheme over $\mdl C$. 
Now $\mdl X/\mdl C$ is in a natural way a torsor under $\Pic^0_{\mdl X/\mdl C}$
coming from the isomorphism
$$\Pic^1_{\mdl X/\mdl C} \simeq \mdl X/\mdl C. $$
We define the natural map
$ \Fib_{X/C} \to \Jac_{J/C} $
by sending the fibration $\mdl X/\mdl C$ to $\Pic^0_{\mdl X/\mdl C}$.
In this language, \proref{deforming-elliptic-torsors} now becomes the first part of our main theorem:

\begin{thm}\label{def-functor-mapping}
The map of functors $\Fib_{X/C} \to \Jac_{J/C}$ is formally smooth. Moreover, we have
$$ \dim (\Fib_{X/C}(k[\epsilon])) = \dim (\Jac_{X/C}(k[\epsilon])) + h^1(C, \Lie(J/C)). $$
\end{thm}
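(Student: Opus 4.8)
The plan is to treat the two assertions separately, deriving both from the torsor deformation theory of Section~\ref{sec-deformation-of-torsors}, and in particular from \proref{deforming-elliptic-torsors}.

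For formal smoothness, I would unwind the definition, reducing as usual to small extensions $0 \to I \to \Lambda \to \Lambda_0 \to 0$ in $\Alg_W$, where it suffices to show that
$$ \Fib_{X/C}(\Lambda) \to \Fib_{X/C}(\Lambda_0) \times_{\Jac_{J/C}(\Lambda_0)} \Jac_{J/C}(\Lambda) $$
is surjective. An element of the target is a fibration $\mdl X_0/\mdl C_0$ over $\Lambda_0$ together with a Jacobian lifting $\mdl J/\mdl C$ over $\Lambda$ of its image $\Pic^0_{\mdl X_0/\mdl C_0}$. Since $\mdl X_0$ is canonically a torsor under $\mdl J_0 = \Pic^0_{\mdl X_0/\mdl C_0}$, \proref{deforming-elliptic-torsors} applies directly: the prescribed Jacobian lift $\mdl J/\mdl C$ guarantees that the torsor $\mdl X_0$ lifts to a $\mdl J$-torsor $\mdl X/\mdl C$ over $\Lambda$. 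This $\mdl X/\mdl C$ is the desired preimage, since it restricts to $\mdl X_0/\mdl C_0$ and, a torsor under an elliptic curve having that curve as the identity component of its relative Picard scheme, satisfies $\Pic^0_{\mdl X/\mdl C} \simeq \mdl J$; hence it maps to the chosen element of $\Jac_{J/C}(\Lambda)$. The only input needed is that the obstruction and representability issues in \proref{deforming-elliptic-torsors} vanish unconditionally over a curve.

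For the dimension formula, I would feed the small extension $0 \to (\epsilon) \to k[\epsilon] \to k \to 0$ into the formal smoothness just established. This produces a short exact sequence of tangent spaces
$$ 0 \to K \to \Fib_{X/C}(k[\epsilon]) \to \Jac_{J/C}(k[\epsilon]) \to 0, $$
where $K$ is the relative tangent space, consisting of those deformations $\mdl X/\mdl C$ over $k[\epsilon]$ whose image in $\Jac_{J/C}(k[\epsilon])$ is trivial. Such a deformation is exactly a torsor under the constant deformation $J \times_k k[\epsilon]$ over $C \times_k k[\epsilon]$ restricting to $X$, and I would compute the set of these via the fundamental sequence (\ref{fundamental-sequence}) with $\mdl S = C$, $\mdl G = J$, and $I = k$.

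The crucial point, and the step I expect to require the most care, is that here the first term $i_*\mdl G_0(\mdl S)/s(\mdl G(\mdl S))$ of (\ref{fundamental-sequence}) vanishes: the group scheme is deformed trivially, so every section of $J$ over $C$ extends to $C \times_k k[\epsilon]$ by the constant section, the specialization map $s$ is surjective, and its cokernel is zero. Together with $H^2(C, \Lie(J/C)) = 0$, valid because $C$ is a curve and $\Lie(J/C)$ is coherent, the fundamental sequence collapses to
$$ 0 \to H^1(C, \Lie(J/C)) \to H^1(C \times_k k[\epsilon], J \times_k k[\epsilon]) \to H^1(C, J) \to 0, $$
exhibiting the fibre over the class of $X$ as a torsor under $H^1(C, \Lie(J/C))$. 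I would then check that isomorphisms of deformations are correctly accounted for, so that $K$ is genuinely isomorphic to the full group $H^1(C, \Lie(J/C))$ rather than a proper subquotient; the dimension formula follows at once from the tangent sequence.
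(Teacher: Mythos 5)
Your proposal is correct and follows essentially the same route as the paper: formal smoothness is reduced to small extensions and deduced from \proref{deforming-elliptic-torsors} (existence of a torsor lifting over any prescribed Jacobian lifting, the obstruction in $H^2$ vanishing over a curve), and the tangent-space formula is obtained by identifying the kernel of $\Fib_{X/C}(k[\epsilon]) \to \Jac_{J/C}(k[\epsilon])$ with torsors under $J \otimes_k k[\epsilon]$ and evaluating the fundamental sequence (\ref{fundamental-sequence}), whose first term vanishes because every section $C \to J$ extends to the trivial deformation. The one point you flag but leave open at the end (that distinct classes in $H^1(C, \Lie(J/C))$ give non-isomorphic deformations, so the kernel is the full group) is likewise passed over silently in the paper, so your write-up is, if anything, slightly more careful.
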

\begin{proof}
Recall that $\Fib_{X/C} \to \Jac_{J/C}$ is formally smooth if for any surjection $\Lambda \twoheadrightarrow \Lambda_0$ in $\Alg_W$ the induced map
\begin{equation*}
 \Fib_{X/C}(\Lambda) \to \Fib_{J/C}(\Lambda_0) \times_{\Jac_{J/C}(\Lambda_0)} \Jac_{J/C}(\Lambda)
\end{equation*}
is surjective. By induction it suffices to verify this for small extensions. However, this follows directly from \proref{deforming-elliptic-torsors} applied to every element $\mdl J$ of $\Jac_{J/C}(\Lambda)$ with reduction $\mdl J_0$ over $\Lambda_0$ and a $\mdl J_0$-torsor $\mdl X_0$.

To prove the statement about the tangent space dimensions, first note that $\Fib_{X/C}$ fulfills the Schlessinger criteria and carries therefore a vector space structure on its tangent space.
We are going to determine the kernel of the following linear map
$$ \Fib_{X/C}(k[\epsilon]) \to \Jac_{J/C}(k[\epsilon]), $$
which consists of torsors under $ J \otimes k[\epsilon]$. To determine this group, we use again (\ref{fundamental-sequence}). The first term vanishes, since every section $C \to J$ lifts to the trivial deformation. Hence the kernel is given by 
$H^1(C, \Lie(J/C)) \otimes I$. 
\end{proof}

\section{Elliptic fibre bundles of Kodaira dimension one}\label{deformations-kodaira-one}
We sum up some facts about invertible sheaves associated to elliptic fibre bundle.

\begin{lemma}\label{lem-inv-sheaves}
 Let $f \colon X \to C$ be an elliptic fibre bundle over a smooth proper curve $C$ over $k$.
\begin{enumerate}
 \item The sheaf $\sheaf L = R^1f_*\O_X$  on $C$ is invertible and a torsion element in $\Pic(C)$.
 \item The relative tangent bundle $\Theta_{X/C}$ is isomorphic to $f^*\sheaf L$.
 \item The Kodaira dimension of $X$ equals the Kodaira dimension of $C$.
\end{enumerate}
\end{lemma}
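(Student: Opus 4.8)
The plan is to prove the three statements in order, since part (i) is the essential input for part (iii). Throughout write $\sheaf L = R^1 f_* \O_X$ and use that $f$ is smooth and proper of relative dimension one with connected fibres, so $f_*\O_X = \O_C$ and $h^1(X_c,\O_{X_c}) = 1$ for every $c \in C$. For (i), invertibility is immediate from the theorem on cohomology and base change (Grauert): as $h^1$ is constant along the fibres, $\sheaf L$ is locally free of rank one. For the torsion claim I would use the identification $\sheaf L \simeq \Lie(\Pic^0_{X/C}/C)$. The relative Picard scheme $J = \Pic^0_{X/C}$ is itself a Jacobian elliptic fibre bundle, so by the structure result \proref{prop-global-structure}, applied over $\Lambda = k$ with $C$ regular and with a cover splitting $J[n]$ for some $n \ge 3$ prime to $p$, there is a finite \'etale Galois cover $\pi\colon C' \to C$ with group $G$ over which $J$ becomes constant, $J \times_C C' \simeq E \times_k C'$. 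Restricting gives $\pi^* \sheaf L \simeq \Lie(E) \otimes_k \O_{C'} \simeq \O_{C'}$, so $\sheaf L$ lies in the kernel of $\pi^*\colon \Pic(C) \to \Pic(C')$. By the low-degree terms of the descent (Hochschild--Serre) spectral sequence this kernel is $H^1(G, H^0(C',\O_{C'}^*)) = \Hom(G, k^*)$, a finite group since $C'$ is proper and connected over the algebraically closed field $k$. Hence $\sheaf L$ is torsion.

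For (ii), the relative tangent bundle $\Theta_{X/C}$ is a line bundle whose restriction to each fibre $X_c$ is the trivial tangent bundle of the elliptic curve $X_c$, so $h^0(X_c, \Theta_{X/C}|_{X_c}) = 1$ for all $c$. Grauert then makes $\sheaf M := f_* \Theta_{X/C}$ invertible and shows the adjunction map $f^* \sheaf M \to \Theta_{X/C}$ is an isomorphism, since it is an isomorphism on every fibre and both sides are line bundles. It remains to identify $\sheaf M$ with $\sheaf L$, which I would do by relative Serre duality for the smooth family of curves $f$: one has $R^1 f_* \O_X \simeq (f_*\omega_{X/C})^\vee$ with $\omega_{X/C} = \Omega^1_{X/C} = \Theta_{X/C}^\vee = f^* \sheaf M^{-1}$, whence $f_*\omega_{X/C} \simeq \sheaf M^{-1}$ and therefore $\sheaf L \simeq \sheaf M$. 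Combining, $\Theta_{X/C} \simeq f^* \sheaf M \simeq f^* \sheaf L$.

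For (iii), I combine (ii) with the canonical bundle formula for the smooth morphism $f$, namely $\omega_X \simeq \omega_{X/C} \otimes f^* \omega_C$. Since $\omega_{X/C} = \Theta_{X/C}^\vee \simeq f^* \sheaf L^{-1}$, this yields $\omega_X \simeq f^* \sheaf A$ with $\sheaf A = \omega_C \otimes \sheaf L^{-1}$, a line bundle on $C$ of degree $\deg \omega_C = 2g-2$ (using that $\sheaf L$ has degree zero, by (i)). By the projection formula and $f_* \O_X = \O_C$ we get $H^0(X, \omega_X^{\otimes m}) \simeq H^0(C, \sheaf A^{\otimes m})$ for all $m \ge 0$, so $\kappa(X)$ equals the Iitaka dimension of $\sheaf A$ on $C$. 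A line bundle of positive degree on a curve has Iitaka dimension one and one of negative degree has dimension $-\infty$, matching $\kappa(C) = 1$ for $g \ge 2$ and $\kappa(C) = -\infty$ for $g = 0$. The only delicate case is $g = 1$, where $\deg \sheaf A = 0$ and $\omega_C \simeq \O_C$, so $\sheaf A \simeq \sheaf L^{-1}$; here the torsion statement of (i) is exactly what guarantees that some power of $\sheaf A$ is trivial, giving Iitaka dimension $0 = \kappa(C)$.

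The main obstacle I anticipate is the torsion assertion in (i): degree zero alone does not suffice, since it only places $\sheaf L$ in the nondiscrete group $\Pic^0(C)$. The argument genuinely needs that $\sheaf L$ becomes trivial on a \emph{finite} \'etale cover together with the finiteness of $\Hom(G, k^*)$, and this is precisely the input that forces the genus-one case of (iii) to give $\kappa = 0$ rather than $-\infty$. The remaining cases of (iii) and all the identifications in (ii) are then formal consequences of cohomology and base change and of relative duality.
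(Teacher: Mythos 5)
Your proof is correct, but it reaches the two key conclusions by genuinely different mechanisms than the paper. For the torsion claim in (i), both arguments trivialize the Jacobian $J=\Pic^0_{X/C}$ after a finite \'etale Galois cover $C'\to C$ via \proref{prop-global-structure}; the paper then descends on the \emph{total space}, applying the norm map $N\colon \Pic(J')\to \Pic(J)$ of \cite[6.5]{ega2} along the \'etale base-change morphism $q\colon J'\to J$ to get $\O_J\simeq N(q^*\Theta_{J'/C'})\simeq \Theta_{J/C}^{\otimes d}$ with $d=\deg q$, whereas you descend on the \emph{base}: from $\pi^*\sheaf L\simeq \O_{C'}$ you place $\sheaf L$ in $\ker\bigl(\pi^*\colon \Pic(C)\to\Pic(C')\bigr)\simeq H^1(G,k^*)=\Hom(G,k^*)$ by Hochschild--Serre, a group killed by $|G|$. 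Both are sound; the norm argument is a one-line computation once the norm functor is granted, while your descent argument is self-contained and makes the order of $\sheaf L$ visibly divide the exponent of $G$. For (ii) the logical order is reversed: the paper quotes the canonical bundle formula $\omega_X\simeq f^*(\sheaf L^{-1}\otimes\omega_C)$ for elliptic fibrations without multiple or wild fibres (which is also how it gets invertibility in (i)) and \emph{deduces} $\Theta_{X/C}\simeq f^*\sheaf L$ from it, whereas you prove $\Theta_{X/C}\simeq f^*f_*\Theta_{X/C}$ directly via Grauert and the fibrewise triviality of the tangent bundle of an elliptic curve, identify $f_*\Theta_{X/C}$ with $\sheaf L$ by relative duality, and then \emph{recover} the canonical bundle formula from smoothness of $f$ for use in (iii). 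Your route buys a self-contained treatment that never invokes the Bombieri--Mumford formula as a black box, at the mild cost of invoking Grothendieck duality and cohomology-and-base-change; and your explicit three-case analysis of (iii), with the torsion statement of (i) carrying exactly the genus-one case, fleshes out what the paper compresses into ``(iii) follows from (i) and the canonical bundle formula.''
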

\begin{proof}
We prove (i).  To see that $\sheaf L$ is invertible just note that $X \to C$ has no multiple and hence no wild fibers.
To prove that $\sheaf L$ is torsion we proceed as follows:
 Let $g \colon J \to C$ be the Jacobian of $X$, and dentote by $e \colon C \to J$ the zero section.
 Since $J = \Pic^0(X/C)$ we have
\[\sheaf L \simeq \Lie(J/C) \simeq e^*\Theta_{J/C} \]
(see \cite[Proposition 1.3]{RaynaudMultipleFibers}). Thus it suffices to show that $\Theta_{J/C}$ is torsion in $\Pic(J)$.
%
By \proref{prop-global-structure}, 
we know there exists an \'etale covering $C' \to C$ and an elliptic curve $E$ over $k$ such that
$J \times_C C'$ is isomorphic to  $E \times_k C'$ which we denote by  $J'$.
Let $q$ be morphism $J' \to J$ given by base change.
Since $q$ is \'etale, it follows $q^*\Theta_{J/C} \simeq \Theta_{J'/C'} \simeq \O_{J'}$.
Now, we apply the norm map $N \colon \Pic(J') \to \Pic(J)$ (see \cite[6.5]{ega2}), associated to the morphism $q$:
\[ \O_J \simeq N(q^*\Theta_{J'/C'}) \simeq \Theta_{J/C}^{\otimes d} \]
where $d$ is the degree of $q$. This proves the claim.

For (ii) note that the canonical bundle formula for $X \to C$ reads
\[ \omega_X \simeq f^*(\sheaf L^{-1} \otimes \omega_C). \]
The claim follows form the expression
 $$(\Theta_{X/C})^{-1} \simeq \Omega^1_{X/C} \simeq \omega_X \otimes (f^*\omega_C)^{-1}.$$

Finally (iii) follows from (i) and the canonical bundle formula.
\end{proof}

In particular, we see that an elliptic fibre bundle is of Kodaira dimension one, if and only if the base curve is of genus greater than or equal to 2.
It is a general fact form the theory of elliptic surfaces, that on a surface of Kodaira dimension one, there exists exactly one elliptic fibration, given by a suitable power of the canonical sheaf.
We generalize this fact to deformations, by showing that the unique fibration lifts to an arbitrary deformation of the total space and that this lifting is unique. In other words:

\begin{thm}\label{kodaira-one}
 For an elliptic fibre bundle $f\colon X \to C$ of Kodaira dimension one, the forgetful map of deformation functors $\Fib_{X/C} \to \Def_{X}$ is an isomorphism.
\end{thm}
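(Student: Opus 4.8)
The plan is to prove that the elliptic fibration is intrinsic to the total space and that this remains true in families, thereby constructing an inverse to the forgetful map $\Fib_{X/C}\to\Def_X$. Over $k$ this is the classical statement alluded to above: by \lemref{lem-inv-sheaves} we have $\omega_X\simeq f^*M$ with $M=\sheaf L^{-1}\otimes\omega_C$, and since $C$ has genus $\ge 2$ (Kodaira dimension one) and $\sheaf L$ is torsion, $M$ has degree $2g-2>0$ and is ample on $C$. Because $f_*\O_X\simeq\O_C$, the projection formula gives $\bigoplus_n H^0(X,\omega_X^{\otimes n})\simeq\bigoplus_n H^0(C,M^{\otimes n})$, so that $C\simeq\Proj\bigoplus_n H^0(X,\omega_X^{\otimes n})$ and $f$ is the resulting pluricanonical morphism. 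The whole strategy is to carry out this $\Proj$ construction relatively, replacing $\omega_X$ by the relative dualizing sheaf $\omega_{\mdl X/\Lambda}$ of a deformation $\mdl X/\Lambda$ and writing $\pi\colon\mdl X\to\Spec\Lambda$.

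For uniqueness (that the forgetful map is injective, i.e.\ formally unramified) I would argue as follows. Any element of $\Fib_{X/C}(\Lambda)$ is a deformation $g\colon\mdl X\to\mdl C$ of the fibration, and such a deformation is again an elliptic fibre bundle: \'etale-local triviality descends to the special fibre and lifts back by the equivalence of \'etale sites (the first Theorem of Section~2), while smoothness is an open condition. Consequently the relative version of \lemref{lem-inv-sheaves}(ii) holds, so $\omega_{\mdl X/\mdl C}$ is pulled back from $\mdl C$ and hence $\omega_{\mdl X/\Lambda}\simeq g^*\mdl M$ for a line bundle $\mdl M$ on $\mdl C$ restricting to $M$. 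Thus $\bigoplus_n\pi_*\omega_{\mdl X/\Lambda}^{\otimes n}\simeq\bigoplus_n\pi_{\mdl C *}\mdl M^{\otimes n}$ and $\mdl C\simeq\Proj\bigoplus_n\pi_*\omega_{\mdl X/\Lambda}^{\otimes n}$, an expression depending only on $\mdl X$. Any two fibration structures on a fixed $\mdl X$ therefore have canonically isomorphic bases and coincide, which gives injectivity.

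For existence (surjectivity, i.e.\ formal smoothness) I would induct on small extensions $0\to I\to\Lambda\to\Lambda_0\to 0$. Given $\mdl X/\Lambda$ whose restriction $\mdl X_0/\Lambda_0$ already carries a fibration $\mdl X_0\to\mdl C_0$ by induction, I would set $\mdl C=\Proj\bigoplus_n\pi_*\omega_{\mdl X/\Lambda}^{\otimes n}$ and produce the morphism $\mdl X\to\mdl C$ from the pluricanonical system, which is base-point free for $n\gg 0$ (it is the pullback of a very ample system on $C$). It then remains to check that $\mdl C$ is flat over $\Lambda$ with $\mdl C\otimes_\Lambda\Lambda_0\simeq\mdl C_0$ and $\mdl C\otimes_\Lambda k\simeq C$, i.e.\ that the sheaves $\pi_*\omega_{\mdl X/\Lambda}^{\otimes n}$ are locally free and commute with base change for $n\gg 0$; once this is known, the induction closes and the construction is inverse to the forgetful map.

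This last point is the main obstacle. In contrast to the surface-of-general-type case, here $H^1(X,\omega_X^{\otimes n})\simeq H^0(C,M^{\otimes n}\otimes\sheaf L)$ never vanishes, because the elliptic fibres contribute through $R^1f_*\O_X=\sheaf L$; so the theorem on cohomology and base change does not by itself give local freeness of $\pi_*$. My plan to circumvent this is to exploit the inductive hypothesis. On $\mdl X_0$ the fibration is available, so by the Leray spectral sequence for $\mdl X_0\to\mdl C_0\to\Spec\Lambda_0$ together with $R^1g_{0*}\O\simeq\sheaf L_0$ one computes that both $\pi_{0*}\omega^{\otimes n}$ and $R^1\pi_{0*}\omega^{\otimes n}$ are locally free over $\Lambda_0$ for $n\gg 0$. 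Representing $R\pi_*\omega^{\otimes n}$ by a two-term complex of finite free modules (the top cohomology drops out since $H^2(X,\omega_X^{\otimes n})=0$ by Serre duality and positivity of $M$), the required freeness and base change over $\Lambda$ should then follow from the local criterion of flatness applied to the small extension, using the freeness already established over $\Lambda_0$ and over $k$. I expect verifying this local-freeness and base-change statement across a small extension, precisely where $H^1$ is nonzero, to be the technically delicate heart of the argument.
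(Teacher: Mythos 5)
Your uniqueness half is sound and is essentially the paper's own argument (\proref{pro-fibration-is-canoical}): the fibration is recovered intrinsically as the pluricanonical map, because $\omega_{\mdl X/\Lambda}$ is the pullback of a relatively ample sheaf from $\mdl C$ up to the torsion bundle $\Omega^1_{\mdl X/\mdl C}$. The genuine gap is in the surjectivity half, and it sits exactly at the step you flag as the ``technically delicate heart'': the claim that local freeness and base change of $\pi_*\omega_{\mdl X/\Lambda}^{\otimes n}$ across a small extension $0 \to I \to \Lambda \to \Lambda_0 \to 0$ follow, via the local criterion of flatness, from freeness over $\Lambda_0$ and the dimensions over $k$. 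That implication is false as a formal statement about two-term complexes. Take $\Lambda = k[\epsilon]$, $\Lambda_0 = k$, and the complex $K^\bullet = [\Lambda \xrightarrow{\epsilon} \Lambda]$: it has constant Euler characteristic, its restrictions to $\Lambda_0$ and to $k$ have cohomology ``free of the expected ranks'' in both degrees, yet $H^0(K^\bullet) \simeq k$ is not $\Lambda$-free and the base-change map $H^0(K^\bullet)\otimes_\Lambda k \to H^0(K^\bullet \otimes k)$ is zero. Precisely because $H^1(X, \omega_X^{\otimes n}) \simeq H^0(C, M^{\otimes n}\otimes \sheaf L)$ never vanishes, this pathology cannot be excluded by the bookkeeping you have available: the failure of base change for $\pi_*\omega^{\otimes n}$ \emph{is} the obstruction to extending the fibration, restated, so your induction is circular at the decisive step and needs a geometric input it does not contain.

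The paper supplies that input by a different route. It passes to the \'etale Galois cover $C' \to C$ splitting the Jacobian, $J' \simeq E \times_k C'$ (\proref{prop-global-structure}), lifts the cover uniquely to $\mdl X' \to \mdl X$, and proves $\Fib_{X'/C'} = \Def_{X'}$ there by combining two facts: (a) $\Fib_{X'/C'}$ is unobstructed, since Jacobian deformations of the split $J'$ are unobstructed (\corref{cor-obstructions-Jacobian-case}) and $\Fib \to \Jac$ is formally smooth because the torsor obstruction lies in an $H^2$ of a coherent sheaf on a curve (\proref{deforming-elliptic-torsors}, \thmref{def-functor-mapping}); and (b) the tangent-space count $\dim\Fib_{X'/C'}(k[\epsilon]) = 4g-2 \ge h^1(X', \Theta_{X'})$ from \lemref{invariants-kodaira-one}. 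A smooth subfunctor with full tangent space equals the whole functor, and the canonical fibration on $\mdl X'$ then descends along the $G$-action by the uniqueness statement. Note the covering step is not cosmetic: for $X$ itself, $\Jac_{J/C}$ can be badly obstructed --- that is the entire point of the non-liftable examples $J$, $J_G$, $J_Q$ --- so the smooth, dimension-counted comparison is only available after splitting the Jacobian. To rescue your direct $\Proj$ construction you would have to import some equivalent of (a) and (b), i.e.\ an actual obstruction comparison between $\Fib_{X/C}$ and $\Def_X$, rather than a flatness criterion.
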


First, we show injectivity:

\begin{prop}\label{pro-fibration-is-canoical}
 Let $f \colon \mdl X \to \mdl C$ be a deformation over $\Lambda \in \Alg_W$ of an elliptic fibre bundle $f \colon X \to C$.
Then $f$ is defined by a suitable power of the canonical sheaf $\mdl \omega_{\mdl X/\Lambda}$.
In particular, $f$ is unique.
\end{prop}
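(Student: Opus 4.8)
The plan is to promote the canonical bundle formula of \lemref{lem-inv-sheaves} to the total family $f\colon \mdl X \to \mdl C$ over $\Lambda$, to identify a suitable power of $\omega_{\mdl X/\Lambda}$ with the pullback of a relatively ample line bundle on $\mdl C$, and then to recover $f$ as the morphism attached to the associated complete linear system.

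First I would record that $f$ is smooth and proper with geometrically connected genus-one fibres. Since $\Lambda$ is Artinian local, both closed immersions $X \hookrightarrow \mdl X$ and $C \hookrightarrow \mdl C$ are homeomorphisms; the locus where $f$ fails to be smooth is closed and, because $X \to C$ is smooth, misses $X$, hence is empty. By the same argument $\mdl C$ is smooth over $\Lambda$, so $\omega_{\mdl C/\Lambda}$ is invertible. Now $\omega_{\mdl X/\mdl C} = \Omega^1_{\mdl X/\mdl C}$ is a line bundle restricting to $\O$ on every fibre, so cohomology and base change (the fibrewise $h^0$ being constantly $1$) makes $f^* f_* \omega_{\mdl X/\mdl C} \to \omega_{\mdl X/\mdl C}$ an isomorphism. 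Writing $\sheaf{M} = f_* \omega_{\mdl X/\mdl C}$ and using $\omega_{\mdl X/\Lambda} \simeq \omega_{\mdl X/\mdl C} \otimes f^*\omega_{\mdl C/\Lambda}$ I obtain
\[ \omega_{\mdl X/\Lambda} \simeq f^*\sheaf{N}, \qquad \sheaf{N} := \sheaf{M} \otimes \omega_{\mdl C/\Lambda}. \]

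Next I would check that $\sheaf N$ is relatively ample over $\Lambda$. Restricting to the closed fibre recovers the formula of \lemref{lem-inv-sheaves}, namely $\sheaf N \otimes k \simeq \sheaf L^{-1} \otimes \omega_C$; as $\sheaf L$ is torsion in $\Pic(C)$ this has degree $2g-2 > 0$ with $g \ge 2$, hence is ample. Ampleness of a line bundle on a proper scheme over an Artinian local ring is detected on the closed fibre, so $\sheaf N$ is relatively ample, and for $m \gg 0$ the sheaf $\sheaf N^{\otimes m}$ is very ample (with vanishing higher cohomology), defining a closed immersion $\mdl C \hookrightarrow \P^N_\Lambda$. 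Because $f$ is smooth proper with connected genus-one fibres we have $f_*\O_{\mdl X} = \O_{\mdl C}$, so the projection formula identifies $H^0(\mdl X, \omega_{\mdl X/\Lambda}^{\otimes m})$ with $H^0(\mdl C, \sheaf N^{\otimes m})$ and the morphism attached to $|\omega_{\mdl X/\Lambda}^{\otimes m}|$ factors as $\mdl X \xrightarrow{f} \mdl C \hookrightarrow \P^N_\Lambda$. Since the second arrow is a closed immersion, $f$ is exactly the morphism defined by $\omega_{\mdl X/\Lambda}^{\otimes m}$; in particular it is intrinsic to $\mdl X$, proving uniqueness and the injectivity half of \thmref{kodaira-one}.

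I expect the crux to be the relative canonical bundle formula, i.e. justifying via cohomology and base change that $\omega_{\mdl X/\mdl C}$ is pulled back from $\mdl C$ in the family rather than only fibrewise. Once that triviality-along-fibres statement is secured, the remaining steps are formal: the Artinian base collapses every ampleness and global-sections question to the special fibre, where \lemref{lem-inv-sheaves} supplies the needed positivity.
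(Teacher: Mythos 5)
Your proof is correct in substance but takes a genuinely different route from the paper's. The paper never shows that $\omega_{\mdl X/\mdl C}$ is itself pulled back from $\mdl C$; instead it only shows that $\Omega^1_{\mdl X/\mdl C}$ is \emph{torsion} in $\Pic(\mdl X)$, by induction on the length of $\Lambda$, using that deformations of a line bundle across a small extension form a group isomorphic to $H^1(X,\O_X)\otimes I$, which is $p$-torsion --- an argument that genuinely uses $\chr(k)=p$. This yields only $\omega_{\mdl X/\Lambda}^{\otimes mn}\simeq f^*(\Omega^1_{\mdl C/\Lambda})^{\otimes mn}$, after which the paper identifies $\Proj(\Sym(\omega_{\mdl X/\Lambda}^{\otimes n}))\simeq\mdl C$ via the projection formula and still has to reconcile $f$ with $f_{can}$: it does so by observing that both lift $f_0$ and that liftings of $f_0$ are unique because $H^0(X,f_0^*\Theta_C)=0$ for $g\ge 2$. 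Your route replaces the torsion argument by the exact relative formula $\omega_{\mdl X/\Lambda}\simeq f^*\sheaf N$, which makes the final identification immediate (the $m$-canonical map visibly factors as $f$ followed by a closed immersion, so no separate lifting-uniqueness step is needed) and is characteristic-free. What you pay for this is exactly the crux you flag: the isomorphism $f^*f_*\omega_{\mdl X/\mdl C}\to\omega_{\mdl X/\mdl C}$ cannot be justified by ``fibrewise $h^0$ constant'' alone, since that is Grauert's theorem and requires a \emph{reduced} base, whereas $\mdl C$ is non-reduced for nontrivial $\Lambda$. The claim is nevertheless true and provable by standard means: the top base-change map $\phi^1$ is automatically surjective, and relative duality gives $R^1f_*\omega_{\mdl X/\mdl C}\simeq(f_*\O_{\mdl X})^\vee\simeq\O_{\mdl C}$ over any Noetherian base, whence local freeness of $R^1f_*\omega_{\mdl X/\mdl C}$, surjectivity of $\phi^0$, invertibility of $f_*\omega_{\mdl X/\mdl C}$, and finally the evaluation map is a fibrewise surjective map of line bundles, hence an isomorphism; alternatively one can induct on the length of $\Lambda$, which is in effect what the paper's weaker torsion statement accomplishes. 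One small point to make explicit: your positivity step $\deg(\sheaf L^{-1}\otimes\omega_C)=2g-2>0$ uses $g\ge 2$, i.e.\ the standing Kodaira-dimension-one hypothesis of the section (the paper uses it in the same way, via ampleness of $\Omega^1_{\mdl C/\Lambda}$), which the bare statement of the proposition leaves implicit.
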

\begin{proof}
 By smoothness of $f$, we get an exact sequence
\[ 0 \to f^*\omega_{\mdl C /\Lambda} \to \Omega^1_{\mdl  X / \Lambda} \to \omega_{\mdl X/\mdl C} \to 0. \]
The outer terms are invertible sheaves. It follows $\omega_{\mdl X/\Lambda} = f^*\omega_{\mdl C/\Lambda} \otimes \Omega^1_{\mdl X/\mdl C}$.

We claim that $\Omega^1_{\mdl X/\mdl C}$ is a torsion element in $\Pic(\mdl X)$.
Let $f_0 \colon X \to C$ denote the reduction of $f$. By \lemref{lem-inv-sheaves} (i) we know that, the invertible sheaf
 $\sheaf L = R^1 f_{0*} \O_X$ is a torsion element in $\Pic(C)$.
The same is true for $\Omega^1_{X/C}$ because $(f_0^* \sheaf L)^\wedge \simeq \Omega^1_{X/C}$ (\lemref{lem-inv-sheaves} (ii)). 
We conclude that $\Omega^1_{\mdl X/\mdl C}$ is of finite order by induction on the length of $\Lambda$:
The tangent space of the deformation functor of invertible sheaves is $H^1(X, \O_X)$, which is a $p$-torsion group, since $k$ is of characteristic $p$.
So let $n$ denote the order of $\Omega^1_{\mdl X/\mdl C}$. For any integer $m$ we get $$\omega_{\mdl X/\Lambda}^{\otimes mn} \simeq f^*(\Omega^1_{\mdl C/\Lambda})^{\otimes mn}.$$
Since $\Omega^1_{\mdl C/\Lambda}$ is ample on $\mdl C$, we find that $\omega_{\mdl X/\Lambda}$ is semiample on $\mdl X$.
Furthermore, $\Proj(\Sym(\omega_{\mdl X/\Lambda}^{\otimes n}))$ is isomorphic to 
$\Proj(\Sym(\Omega^1_{\mdl C/\Lambda})^{\otimes n}) \simeq \mdl C$.
This follows because $H^0(\mdl X, f^*(\omega_{\mdl C/\Lambda})^{\otimes mn}) \simeq H^0(\mdl C, (\omega_{\mdl C/\Lambda})^{\otimes mn})$ by the projection formula.

Denote the map $\mdl X \to \mdl C$ given by the canonical sheaf by $f_{can}$.
On the reduction we find $f \otimes_\Lambda k = f_0 = f_{can} \otimes_{\Lambda} k$.
However, liftings of $f_0$ are unique. It is enough to prove this for first order deformations.
The tangent space of the functor of deformations of $f_0 \colon X \to C$ is $H^0(X', f_0^*\Theta_{C})$ and this vanishes since the dual $f_0^*\omega_{C}$ of $f^*\Theta_{C}$ has non-zero global sections, because $g(C) \ge 2$.
\end{proof}

To prove the surjectivity of $\Fib_{X/C} \to \Def_X$ we need an estimate for $h^1(X, \Theta_{X})$:

\begin{lemma}\label{invariants-kodaira-one}
 Denote by $g \ge 2$ the genus of $C$, and set $\sheaf L = R^1 f_*\O_{X}$. We get
\begin{equation}\label{invariants-inequality}
 h^1(X, \Theta_{X}) \le g - 1 + h^0(C, \sheaf L) + h^0(C, \sheaf L^2) + 3g - 3. 
\end{equation}
If $X$ is Jacobian, we get equality in (\ref{invariants-inequality}).
\end{lemma}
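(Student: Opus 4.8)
The plan is to compute $h^1(X,\Theta_X)$ via the relative tangent sequence together with the Leray spectral sequence, and to handle both the inequality and the Jacobian equality through the same mechanism: a connecting homomorphism whose kernel is bounded above by $3g-3$, with the bound attained exactly when the tangent sequence splits. First I would write down the relative tangent sequence of the smooth morphism $f$,
\[ 0 \to \Theta_{X/C} \to \Theta_X \to f^*\Theta_C \to 0, \]
and use \lemref{lem-inv-sheaves}(ii) to identify $\Theta_{X/C} \simeq f^*\sheaf L$. Since $f$ has connected one-dimensional fibres with $f_*\O_X = \O_C$ and $R^1f_*\O_X = \sheaf L$ (and $R^{\ge 2}f_*\O_X = 0$), the projection formula gives $R^qf_*(f^*\sheaf F) \simeq \sheaf F \otimes R^qf_*\O_X$ for any locally free $\sheaf F$ on $C$. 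As $C$ is a curve the Leray spectral sequence degenerates, yielding for each such $\sheaf F$ a short exact sequence
\[ 0 \to H^1(C,\sheaf F) \to H^1(X,f^*\sheaf F) \to H^0(C,\sheaf F \otimes \sheaf L) \to 0, \]
together with $H^0(X,f^*\sheaf F) = H^0(C,\sheaf F)$ and $H^2(X,f^*\sheaf F) = H^1(C,\sheaf F\otimes\sheaf L)$.

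Next I would evaluate the pieces. For $\sheaf F = \sheaf L$, Riemann--Roch (with $\deg\sheaf L = 0$ by \lemref{lem-inv-sheaves}(i)) gives $h^1(C,\sheaf L) = g-1+h^0(C,\sheaf L)$, so $h^1(X,f^*\sheaf L) = g-1+h^0(C,\sheaf L)+h^0(C,\sheaf L^2)$. For $\sheaf F = \Theta_C = \omega_C^{-1}$, both $H^0(C,\Theta_C)$ and $H^0(C,\Theta_C\otimes\sheaf L)$ vanish, their degrees being negative for $g\ge 2$; hence $H^0(X,f^*\Theta_C) = 0$ and $h^1(X,f^*\Theta_C) = h^1(C,\Theta_C) = 3g-3$ by Serre duality. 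Feeding these into the long exact cohomology sequence of the tangent sequence, the vanishing of $H^0(X,f^*\Theta_C)$ forces an inclusion $H^1(X,f^*\sheaf L) \hookrightarrow H^1(X,\Theta_X)$, whence
\[ h^1(X,\Theta_X) = h^1(X,f^*\sheaf L) + \dim\Ker\!\left(\delta\colon H^1(X,f^*\Theta_C)\to H^2(X,f^*\sheaf L)\right). \]
Bounding $\dim\Ker\delta \le h^1(X,f^*\Theta_C) = 3g-3$ then gives exactly \eqref{invariants-inequality}.

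Equality holds as soon as the connecting map $\delta$ vanishes, which I would obtain in the Jacobian case by splitting the full tangent sequence. When $X = J$ is Jacobian, \proref{prop-global-structure} (applied over $k$) presents $J = (E\times_k C')/G$ for an étale Galois cover $q\colon J'\to J$, $J' = E\times_k C'$, with group $G$ acting diagonally. On the product the tangent sequence splits canonically, $\Theta_{J'} \simeq \pr_1^*\Theta_E \oplus \pr_2^*\Theta_{C'}$, and because the $G$-action is a product of actions on the two factors this decomposition is $G$-equivariant; the summands are identified with $q^*\Theta_{J/C}$ and $q^*g^*\Theta_C$. By Galois descent along the étale $G$-torsor $q$, the $G$-stable decomposition descends to a splitting $\Theta_J \simeq g^*\sheaf L \oplus g^*\Theta_C$ on $J$, so $\delta = 0$ and the inequality becomes an equality. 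The main obstacle is precisely this equality step: the expected shortcut of pulling back to $J'$ and averaging over $G$ fails, since $p \mid |G|$ in the very examples one cares about, so one must instead descend the $G$-stable splitting directly, which needs no hypothesis on $|G|$ modulo $p$.
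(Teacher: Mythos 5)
Your proof of the inequality coincides with the paper's: same relative tangent sequence, same identification $\Theta_{X/C}\simeq f^*\sheaf L$ from \lemref{lem-inv-sheaves}, same Leray/projection-formula short exact sequences, and the same Riemann--Roch evaluation $h^1(C,\sheaf L)=g-1+h^0(C,\sheaf L)$; phrasing the subadditivity through the connecting map $\delta$ (using $H^0(X,f^*\Theta_C)=0$) is only a cosmetic refinement of the paper's bound $h^1(X,\Theta_X)\le h^1(X,\Theta_{X/C})+h^1(X,f^*\Theta_C)$. Where you genuinely diverge is the equality in the Jacobian case. The paper splits the tangent sequence directly from the section $s\colon C\to X$: it asserts a global retraction of $f^*\Omega^1_C\to\Omega^1_X$ given locally by $dg\mapsto d(s^*g)\otimes 1$, and dualizes. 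You instead invoke \proref{prop-global-structure} over $k$ to write $J=(E\times_k C')/G$, observe that the canonical decomposition $\Theta_{E\times C'}\simeq \pr_1^*\Theta_E\oplus\pr_2^*\Theta_{C'}$ is $G$-equivariant because the action is diagonal, and descend it along the \'etale $G$-torsor; this is exactly the mechanism the paper itself uses later for bielliptic surfaces in \proref{pro-bielliptic-invariants} (``since the action is diagonal, the sequence is split''). Each route has its merits: the paper's is a one-liner needing no structure theory, but its local formula is delicate --- the assignment $dg\mapsto d(s^*g)\otimes 1$ is not obviously $\O_X$-linear, and indeed a section alone cannot split the tangent sequence of an arbitrary Jacobian elliptic fibration (the Kodaira--Spencer class obstructs it for non-isotrivial families), so the bundle structure is being used implicitly; your descent argument makes that use explicit and is watertight, and your remark that one must descend the $G$-stable splitting rather than average is well taken, since $p$ divides $|G|$ in precisely the examples the paper needs. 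The only cost is the (harmless, since $X$ is assumed Jacobian) appeal to \proref{prop-global-structure}.
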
 
\begin{proof}
 Since $f$ is smooth, we have an exact sequence
\begin{equation}\label{theta-sequence}
 0 \to \Theta_{X/C} \to \Theta_{X} \to f^*\Theta_{C} \to 0 .
\end{equation}
This gives rise to an exact sequence of cohomology groups
$$ H^1(X, \Theta_{X/C}) \to H^1(X, \Theta_{X}) \to H^1(X, f^*\Theta_{C}). $$
Thus $h^1(X, \Theta_{X}) \le  h^1(X, f^*\Theta_{C}) + h^1(X, \Theta_{X/C})$.

By \lemref{lem-inv-sheaves} we have $\Theta_{X/C} \simeq f^*\sheaf L$.
To compute $h^1(X, \Theta_{X/C})$ we use the Leray spectral sequence and the isomorphisms given by projection formula 
$(f_*\Theta_{X/C} \simeq f_*f^*\sheaf L \simeq \sheaf L$ and  $R^1 f_* \sheaf L \simeq \sheaf L^{\otimes 2})$ to obtain
$$ 0 \to H^1(C, \sheaf L) \to H^1(X, \Theta_{X/C}) \to H^0(C, \sheaf L^{\otimes 2}) \to 0. $$
By the Riemann-Roch Theorem we get $h^1(C, \sheaf L) = g - 1 + h^0(C, \sheaf L)$.
Thus
$$ h^1(X, \Theta_{X/C}) = g - 1 + h^0(C, \sheaf L) + h^0(C, \sheaf L^{\otimes 2}). $$

For $h^1(X, f^*\Theta_{C})$ we obtain with the same approach and using $\Theta_C \simeq f_*f^*\Theta_C$ the following sequence
$$ 0 \to H^1(C, \Theta_{C}) \to H^1(X, f^*\Theta_{C}) \to H^0(C, \sheaf L \otimes \Theta_{C}) \to 0. $$
Since $g > 1$, the last term vanishes and we get $h^1(X, f^*\Theta_{C}) = 3g - 3$.

In the Jacobian case let $s \colon C \to X$ denote the section. The natural map $f^*\Omega^1_C \to \Omega^1_X$ has a global splitting given locally by
$d (g) \mapsto d (s^*g) \otimes 1 $. Dualizing yields a splitting of (\ref{theta-sequence}).
%
\end{proof}

Now, we are set up to show the surjectivity of the inclusion $\Fib_{X/C} \to \Def_{X}$.

\begin{prop}\label{kodaira-one-prop}
Let $\Lambda$ be an object of $\Alg_W$.
Every deformation $\mdl{X} \in \Def_{X}(\Lambda)$ of the total space of $X$ admits a lifting of the fibration on $X$; in other words $\mdl{X} \in \Fib_{X/C}(\Lambda)$.
\end{prop}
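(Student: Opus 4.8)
The plan is to prove \proref{kodaira-one-prop} by induction on the length of $\Lambda$, reducing (in the standard Schlessinger fashion) to the case of a small extension $0 \to I \to \Lambda \to \Lambda_0 \to 0$. So I assume given a deformation $\mdl X \in \Def_X(\Lambda)$ whose reduction $\mdl X_0 = \mdl X \otimes_\Lambda \Lambda_0$ already carries a fibration $f_0 \colon \mdl X_0 \to \mdl C_0$ lifting $f \colon X \to C$, and the goal is to produce a lift $\mdl X \to \mdl C$ over $\Lambda$; the base case $\Lambda = k$ is the given fibration $f$. Functorially, this is exactly the statement that $\Fib_{X/C} \to \Def_X$ is formally smooth, which together with the injectivity from \proref{pro-fibration-is-canoical} yields \thmref{kodaira-one}.

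First I would lift the base. Since $C$ is a smooth curve its deformations are unobstructed ($H^2(C, \Theta_C) = 0$), so a flat lift $\mdl C \to \Spec(\Lambda)$ of $\mdl C_0$ exists, and the set of such lifts is a torsor under $H^1(C, \Theta_C) \otimes_k I$. Next, fixing the lifts $\mdl X$ and $\mdl C$, I would invoke the obstruction theory for lifting the morphism $f_0$. As $f$ is smooth, local lifts to $\mdl X \to \mdl C$ always exist, and the obstruction to gluing them into a global $\Lambda$-morphism lies in $H^1(X, f^*\Theta_C) \otimes_k I$; moreover, when this class vanishes the lift is unique because $H^0(X, f^*\Theta_C) = 0$, as already observed in the proof of \proref{pro-fibration-is-canoical}. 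The key structural input is how this obstruction depends on the chosen target lift: replacing $\mdl C$ by the lift differing from it by $\xi \in H^1(C, \Theta_C) \otimes_k I$ changes the obstruction by the pullback $f^*\xi \in H^1(X, f^*\Theta_C) \otimes_k I$.

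The crux is then the isomorphism
\[ f^* \colon H^1(C, \Theta_C) \xrightarrow{\ \sim\ } H^1(X, f^*\Theta_C), \]
which is exactly the content of the exact sequence $0 \to H^1(C, \Theta_C) \to H^1(X, f^*\Theta_C) \to H^0(C, \sheaf L \otimes \Theta_C) \to 0$ established in the proof of \lemref{invariants-kodaira-one}, whose right-hand term vanishes because $g \ge 2$. Since the obstruction depends on the target lift through this isomorphism, the assignment sending a lift $\mdl C$ to its obstruction is an affine map with bijective linear part, and hence there is a (unique) choice of $\mdl C$ for which the obstruction in $H^1(X, f^*\Theta_C) \otimes_k I$ vanishes. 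For that $\mdl C$ the morphism lifts to $\mdl X \to \mdl C$ over $\Lambda$, and this pair is the desired element of $\Fib_{X/C}(\Lambda)$, completing the inductive step.

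I expect the main technical obstacle to be pinning down the obstruction-variation formula — that the dependence of the morphism-lifting obstruction on the target deformation is precisely the pullback of its Kodaira--Spencer class, with the source deformation $\mdl X$ held fixed. This is standard (it can be extracted from the Illusie cotangent-complex formalism, or from the explicit \v{C}ech description of lifting a morphism with prescribed deformations of source and target), but it must be set up with the correct signs and identifications so that the linear part of the variation is exactly the map $f^*$ appearing in \lemref{invariants-kodaira-one}. Once that compatibility is in hand, the isomorphism from the lemma does all the work and the vanishing of the obstruction is automatic.
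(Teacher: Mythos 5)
Your proposal is correct, but it takes a genuinely different route from the paper. The paper's proof never touches the morphism-lifting obstruction directly: it passes to the \'etale Galois cover $C' \to C$ trivializing the Jacobian (\proref{prop-global-structure}), lifts $X' = X \times_C C'$ uniquely to a covering $\mdl X' \to \mdl X$, and shows $\Fib_{X'/C'} \simeq \Def_{X'}$ by combining unobstructedness of $\Fib_{X'/C'}$ (via \proref{deforming-elliptic-torsors} and the unobstructedness of Jacobian deformations) with the tangent-space count of \lemref{invariants-kodaira-one}, which shows $\dim(\Fib_{X'/C'}(k[\epsilon])) = 4g-2 \ge h^1(X', \Theta_{X'})$; it then descends the fibration $\mdl X' \to \mdl C'$ to $\mdl X$ using its canonicity (\proref{pro-fibration-is-canoical}) and $G$-equivariance. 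You instead run a small-extension induction on $\Lambda$ and kill the gluing obstruction in $H^1(X, f^*\Theta_C) \otimes I$ by adjusting the lift of the base curve, using that $f^* \colon H^1(C, \Theta_C) \to H^1(X, f^*\Theta_C)$ is bijective --- which is indeed the edge map in the very Leray sequence the paper uses in \lemref{invariants-kodaira-one}, with $H^0(C, \sheaf L \otimes \Theta_C) = 0$ since $g \ge 2$. Your argument buys directness: it avoids the covering trick, the torsor machinery, the dimension count, and the descent step, and it establishes formal smoothness of $\Fib_{X/C} \to \Def_X$ in one stroke for all such $X$; the cost is that you must verify the obstruction-variation formula (that varying the target lift by $\xi \in H^1(C,\Theta_C)\otimes I$ shifts the obstruction by $f^*\xi$), which you correctly flag --- this is standard \v{C}ech calculus and does work with source fixed and target varying. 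Two small points of hygiene: local lifts of $f_0$ exist because $\mdl C$ is smooth over $\Lambda$ (a flat deformation of the smooth curve $C$), not because $f$ is smooth as you wrote; and since the torsor structure lives on isomorphism classes of lifts of $\mdl C_0$, you should note that the vanishing of the obstruction depends only on the isomorphism class of $\mdl C$, which is immediate since an isomorphism of lifts transports liftings of $f_0$. The paper's route, by contrast, buys structural information --- it identifies the deformations of the cover explicitly and reuses the already-developed Jacobian and torsor theory of Sections \ref{sec-jacobian} and \ref{sec-deformation-of-torsors} --- but your proof is the more economical one for this particular statement.
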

\begin{proof}
Denote by $J$ the Jacobian of $X$. 
By \proref{prop-global-structure}, we know that there is an \'etale Galois covering $C' \to C$ with group $G$ such that $J' = J \times_{C} {C'} = E \times_k C'$, for some elliptic curve $E$ over $k$.
Since forming $\Pic^0$ commutes with base change, the Jacobian associated to the fibration $X' = X \times_{C} C'/C'$ will be $J'$.
We denote by $\mdl X' \to \mdl X$ the unique lifting of $X' \to X$.

We claim that $\mdl X'$ admits an elliptic fibration. To see this, we show that the deformation functors $\Fib_{X'/C'}$ and $\Def_{J'}$ are isomorphic.

The functor of Jacobian deformations of $J'$ is unobstructed (see \corref{cor-obstructions-Jacobian-case}), so we conclude by \proref{deforming-elliptic-torsors}, that $\Fib_{X'/C'}$ is unobstructed as well.
It remains to show that
$$ h^1(X', \Theta_{X'}) = \dim( \Fib_{X'/C'}(k[\epsilon])). $$

Let $g$ denote the genus of $C$.
We have $h^1(X', \Theta_{X'}) \le 4g - 2$ by  \lemref{invariants-kodaira-one}.
As for $\Fib_{X'/C'}(k[\epsilon])$, we have $(3g - 3) + 1$ dimensions coming from the functor of Jacobian deformations of $J'$: Namely $3g - 3$ from the deformations of the $C'$, and one dimension coming from $E$.
The first cohomology of the Lie algebra $\O_{C'}$ of $J'$ gives $g$ additional dimensions.

Now, we come back to our deformation $\mdl{X}$ of $X$.
By \proref{pro-fibration-is-canoical} the fibration $g \colon \mdl{X}' \to \mdl C'$ is defined in terms of the canonical bundle of $\mdl X'$.
The action of $G$ an $\mdl X'$ induces an action on the canoical model $\mdl C'$ of $\mdl X'$. For $\sigma \in G$ we get a diagram:
 \begin{equation*}
  \xymatrix{ \mdl X' \ar[d] \ar[r]^\sigma  & \mdl X' \ar[d] \\
             \mdl C' \ar[r]^\sigma          & \mdl C'}
 \end{equation*}
This implies that $g \colon \mdl X' \to \mdl C'$ descents to a fibration $f \colon \mdl X \to \mdl C$ on $\mdl X$.
%
\end{proof}

We are going to present some applications of \thmref{kodaira-one}.
Recall that in Section \ref{sec-jacobian}, we constructed a Jacobian elliptic fibre bundle $J$ over some curve of genus two over a field of characteristic 3, which was shown to be non-liftable as Jacobain elliptic fibre bundle.
We also gave two examples denoted by $J_G$ and $J_Q$, showing the same behaviour in characteristic two.

\begin{thm}\label{non-lift}
 The elliptic fibre bundles $J$ (in characteristic three) and $J_G, J_Q$ (in characteristic two) do not admit a formal lifting to characteristic zero. 
\end{thm}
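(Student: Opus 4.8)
The plan is to combine three facts already in hand into a single chain: the non-liftability of $J$, $J_G$, $J_Q$ \emph{as Jacobian fibrations} (Propositions \ref{pro-non-lift-jacobian-3} and \ref{pro-non-lift-jacobian-2}); the identification of the forgetful map $\Fib_{X/C} \to \Def_X$ as an isomorphism in Kodaira dimension one (Theorem \ref{kodaira-one}); and the natural transformation $\Fib_{X/C} \to \Jac_{J/C}$ taking $\Pic^0$. The point is that this chain turns any lift of the bare total space into a Jacobian lift, which the earlier propositions forbid. Since each base curve in the construction has genus $g \ge 2$, Lemma \ref{lem-inv-sheaves}(iii) shows that $J$, $J_G$, $J_Q$ are of Kodaira dimension one, so Theorem \ref{kodaira-one} is applicable in all three cases.

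First I would reduce to ruling out a second-order lift. A formal lifting to characteristic zero is by definition a compatible system of deformations over the Artinian quotients $W/p^n$; restricting such a system to $W_2 := W/p^2$ produces in particular an element of $\Def_J(W_2)$ (and likewise for $J_G$, $J_Q$). Hence it suffices to show that $\Def_J(W_2)$ is empty. Suppose, for contradiction, that $\mdl J \in \Def_J(W_2)$ is a lift. By Theorem \ref{kodaira-one} the map $\Fib_{J/C} \to \Def_J$ is an isomorphism, so $\mdl J$ carries a unique lift of its fibration and thus defines an element of $\Fib_{J/C}(W_2)$. Applying the natural map $\Fib_{J/C} \to \Jac_{J/C}$, which sends a fibration to the identity component of its relative Picard scheme, yields an element of $\Jac_{J/C}(W_2)$. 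Because $J$ is itself Jacobian, its associated Jacobian $\Pic^0_{J/C}$ is canonically $J$ again, so this element is precisely a \emph{Jacobian} lift of $J$ over $W_2$. But $p \neq 0$ in $W_2 = W/p^2$ (with $p = 3$, resp.\ $p = 2$), so Proposition \ref{pro-non-lift-jacobian-3} (resp.\ \ref{pro-non-lift-jacobian-2}) forbids any such Jacobian lift. This contradiction gives $\Def_J(W_2) = \varnothing$, and the identical argument disposes of $J_G$ and $J_Q$.

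The step I expect to require the most care is verifying that the natural map $\Fib_{J/C} \to \Jac_{J/C}$ genuinely lands among Jacobian lifts \emph{of $J$} rather than of some other elliptic curve. Concretely, one must check that the formation of $\Pic^0$ commutes with the reduction to $k$, so that the special fibre of $\Pic^0_{\mdl J/\mdl C}$ is $\Pic^0_{J/C} = J$ with its zero section, and that $\Pic_{\mdl J/\mdl C}$ is representable over $W_2$. The representability is exactly where the earlier remark on lifting a $p$-th power of a relatively ample line bundle, together with Theorem 4.8 of \cite{Picard}, is invoked. Once these compatibilities are confirmed the argument is purely formal; notably it uses only the \emph{existence} of the map to $\Jac_{J/C}$ and not the formal-smoothness half of Theorem \ref{def-functor-mapping}.
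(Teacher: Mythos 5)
Your chain---non-liftability as Jacobian bundles (\proref{pro-non-lift-jacobian-3}, \proref{pro-non-lift-jacobian-2}), Kodaira dimension one via \lemref{lem-inv-sheaves} (iii), the isomorphism $\Fib_{J/C} \to \Def_J$ of \thmref{kodaira-one}, and then passing to a Jacobian lift via $\Pic^0$---is exactly the paper's proof of \thmref{non-lift}, and your observation that only the \emph{existence} of the map $\Fib_{J/C} \to \Jac_{J/C}$ is needed, not the formal smoothness of \thmref{def-functor-mapping}, is accurate (the paper phrases this middle step through \proref{deforming-elliptic-torsors}, which is the same mechanism). Your care about representability of the Picard scheme and compatibility of $\Pic^0$ with reduction is also well placed and matches the paper's setup.

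There is, however, one genuine gap, in your reduction step: a formal lifting to characteristic zero is \emph{not} by definition a compatible system over the quotients $W/p^n$. It is a compatible system over the Artinian quotients $R/\ideal m^n$ of some complete local Noetherian $W$-algebra $R$ of characteristic zero with residue field $k$, and $R$ may well be a ramified extension of $W$. This is not a pedantic distinction in this paper: the bielliptic families of \proref{split-families} live over $W[i]$ and $W[\pi]$ precisely because no lift over $W$ itself exists, so ruling out lifts over $W$-quotients alone proves a strictly weaker statement. Concretely, showing $\Def_J(W/p^2) = \varnothing$ does not exclude a lift over $W[\pi]$ with $\pi^2 = p$: such a lift induces no element of $\Def_J(W/p^2)$, since there is no $W$-algebra map $W[\pi]/\pi^3 \to W/p^2$ (one would need a square root of $p$ in $W/p^2$, and every element there has square either a unit or zero). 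The repair is short: if $R$ has characteristic zero, then by Krull's intersection theorem $p \notin \ideal m^n$ for some $n$, so $\Lambda = R/\ideal m^n$ is an object of $\Alg_W$ with $p \cdot \Lambda \neq 0$; your argument, run verbatim over $\Lambda$ instead of $W/p^2$, produces a Jacobian lift of $J$ (resp.\ $J_G$, $J_Q$) over $\Lambda$, contradicting \proref{pro-non-lift-jacobian-3} and \proref{pro-non-lift-jacobian-2}, whose conclusions forbid Jacobian lifts over \emph{any} ring in which $p \neq 0$. With that substitution your proof coincides with the paper's.
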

\begin{proof}
We already saw that $J$, $J_G$ and $J_Q$ cannot be lifted as Jacobian elliptic fibre bundles (\proref{pro-non-lift-jacobian-3} and \proref{pro-non-lift-jacobian-2}).
From \proref{deforming-elliptic-torsors}, it follows that the same is true for liftings which are not Jacobian but admit an elliptic fibration.
Finally, observe that the base curves in both cases are of genus $g \ge 2$, which implies by \lemref{lem-inv-sheaves} (iii), that the Kodaira dimensions of $J$, $J_G$ and $J_Q$ are 1. Now, by \thmref{kodaira-one}, we get that every deformation is elliptic.
\end{proof}

One can make an interesting remark here. Recall the following conjecture:

\begin{conj}[F. Oort, 1985]
 Let $k$ be a field of characteristic $p$, and let $C$ be a smooth curve of genus at least 2.
Let $G$ be a cyclic subgroup of  $\Aut(C)$. Then there exists a lifting to characteristic zero of the pair $(C, G)$.
\end{conj}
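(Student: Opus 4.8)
The plan is to reduce this global statement to a purely local question at the ramification points and then to solve the resulting local lifting problem for cyclic $p$-groups. First I would invoke the local-global principle for equivariant lifting, in the form established by Bertin--M\'ezard and Green--Matignon: a faithful $G$-action on a smooth proper curve $C/k$ lifts to a $G$-action over a finite extension of $W$ if and only if, for every closed point $x$ whose stabilizer $I_x \subseteq G$ is nontrivial, the induced local action of $I_x$ on $\O_{C,x}^\wedge \simeq k[[t]]$ admits a lift to an action on $W'[[T]]$ over some finite extension $W'/W$. The point is that the global deformation obstructions are concentrated at the branch points: away from the branch locus the cover is \'etale and lifts essentially uniquely, as recorded in the equivalence of categories of Section~\ref{notations}, so the only genuine content is local.

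Since $G$ is cyclic, every inertia group $I_x$ is cyclic, and we may write $I_x = H \times \Z/p^n\Z$ with $H$ of order prime to $p$. The tame factor $H$ poses no difficulty: after choosing a uniformizer, $H$ acts on the cotangent line by a root of unity, and this representation lifts verbatim, so I would immediately reduce to the wild part and hence to the local lifting problem for a single cyclic $p$-group $\Z/p^n\Z \hookrightarrow \Aut(k[[t]])$. For such groups the higher obstructions of Bertin and the KGB obstruction of Chinburg--Guralnick--Harbater are known to vanish, so the question becomes one of \emph{existence} of a lift realizing the prescribed ramification data, rather than of removing a cohomological obstruction.

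The heart of the matter, and where I expect the real difficulty to lie, is the wild local lifting problem for $n \ge 2$. A generator of $\Z/p^n\Z$ acts on $k[[t]]$ with a prescribed increasing sequence of higher ramification breaks, and any lift must produce, over $W'$, a family of automorphisms of a disk whose ramification data reduce to these breaks; the breaks are constrained by Hasse--Arf integrality, and it is exactly their interaction across the successive subquotients $\Z/p^i\Z$ that obstructs any naive construction. For $n = 1$ existence is classical via Oort--Sekiguchi--Suwa theory, which realizes the $\Z/p$-action as a degeneration of a $\mu_p$-torsor and accommodates any single break. For $n \ge 2$ I would follow the Hurwitz-tree and auxiliary-cover method: build a semistable model of the hypothetical lifted disk over $W'$, attach to each component of its special fibre compatible differential data --- a meromorphic differential whose pole orders and residues match the ramification breaks --- and then solve the local deformation problem component by component before glueing along the tree. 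Constructing such differential data with the correct residues, verifying that the rigid-analytic pieces patch into an actual formal, and then algebraic, lift of each local action, and finally reassembling the local lifts into a global lift of $(C,G)$ via the local-global principle of the first step, is the portion I expect to consume essentially all of the work.
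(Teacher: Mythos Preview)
The statement you are attempting to prove is not proved in the paper at all: it is recorded there as a \emph{conjecture} (explicitly attributed to F.~Oort, 1985), and the paper immediately remarks that it was, at the time of writing, only known when $p^3 \nmid |G|$. The paper's contribution in this direction is \thmref{thm-oort-conj}, which goes in the opposite direction: it shows that liftability of a certain elliptic fibre bundle $(E \times C)/G$ is \emph{equivalent} to liftability of the pair $(C,G)$, thereby reformulating the Oort conjecture as a liftability question for surfaces rather than resolving it. So there is no ``paper's own proof'' to compare against.

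As for the content of your proposal: what you have written is a faithful high-level outline of the program that eventually did settle the Oort conjecture (local--global reduction \`a la Bertin--M\'ezard and Green--Matignon, reduction to cyclic $p$-groups, then the Hurwitz-tree/good-deformation-datum machinery completed by Obus--Wewers and Pop). But it is an outline, not a proof: the step you flag as ``the portion I expect to consume essentially all of the work'' is precisely the part that required substantial new ideas and several papers to carry out, and your sketch does not supply those ideas. In particular, constructing the required differential data on the Hurwitz tree with compatible residues for arbitrary ramification filtrations, and showing that the resulting local pieces glue, is not something one can wave through; the ``no KGB obstruction'' remark, while true for cyclic groups, is a necessary condition and does not by itself produce a lift. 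So the proposal correctly identifies the architecture of the eventual proof, but as written it is a research program rather than an argument, and in any case it lies entirely outside the scope of the present paper.
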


The conjecture is known to hold if the order of $G$ is not divided by $p^3$ (\cite{MR1645000}).
Given a curve $C$ and a cyclic subgroup $G \subset \Aut(C)$,  we construct an elliptic surface $X \to B =  C/G$, 
which has a formal lifting to characteristic zero if and only if the pair $(C, G)$ is liftable.

Let $E$ be an ordinary elliptic curve over $k$. An action of $G$ on $E$ is given by choosing a point of order $\ord(G)$. We set $X = (E \times C)/G$, where we divide out by the diagonal action. Note that the action of $G$ on the product is free. The surface $X$ will have an elliptic fibration coming from the projection $E \times C \to C$.
However, this fibration will in general not define an elliptic fibre bundle, because the fixed points of the $G$ action on $C$ will give rise to multiple fibers.

\begin{thm}\label{thm-oort-conj}
 The surface $X$ is formally liftable to characteristic zero if and only if there exists a lifting $\mdl C$ of $C$ together with a lifting of $G$.
\end{thm}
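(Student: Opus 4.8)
The plan is to prove the two implications separately, the organizing principle being that although $X$ itself is \emph{not} an elliptic fibre bundle over $B = C/G$ (the fixed points of the $G$-action on $C$ produce multiple fibres, so $X\to B$ lies outside the scope of \thmref{kodaira-one}), its étale $G$-cover $E\times C\to X$ is a \emph{trivial} elliptic fibre bundle over $C$, to which the theory of Section~\ref{deformations-kodaira-one} applies verbatim. The strategy is therefore to transport everything up to the free cover, run the canonical-fibration machinery there, and then descend along $G$.

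For the \emph{if} direction I would argue by direct construction. Given a lifting $\mdl C$ of $C$ carrying a lifting of the $G$-action, it remains to lift $E$ together with its $G$-action. Since $G$ acts by translation, this amounts to lifting $E$ together with the point $\tau\in E$ of order $n=\ord(G)$ defining the action. Here the ordinarity of $E$ is essential: writing $n=p^a m$ with $(m,p)=1$, the prime-to-$p$ part of $\tau$ lies in the étale torsion and lifts uniquely, while the $p$-primary part lies in $E[p^a](k)\cong\Z/p^a\Z$, i.e.\ in the étale quotient of the ordinary curve, and hence lifts as well by Serre--Tate theory. One thus obtains a lift $\mdl E$ of $E$ carrying a translation action of $G$ of order $n$. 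The diagonal action of $G$ on $\mdl E\times\mdl C$ is again free, since translation on $\mdl E$ has no fixed points, so the quotient $\mdl X=(\mdl E\times\mdl C)/G$ exists as a scheme, is flat over $\Lambda$, and reduces to $X$; assembling these over the Artinian quotients of $W$ produces a formal lifting.

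The \emph{only if} direction is the heart of the matter. Suppose $\mdl X$ is a formal lifting of $X$. First I would lift the étale Galois cover $E\times C\to X$: since $X$ and $\mdl X$ share the same underlying topological space, the equivalence of categories of étale coverings under infinitesimal thickening (Theorems 5.5 and 8.3 of \cite{sga1}, recalled in the Preliminaries) yields a unique étale $G$-cover $\mdl Y\to\mdl X$. Being étale over the flat scheme $\mdl X$ and reducing to $E\times C$, the scheme $\mdl Y$ is a deformation of $E\times C$. Now $E\times C\to C$ is an elliptic fibre bundle of Kodaira dimension one (\lemref{lem-inv-sheaves}, using $g(C)\ge 2$), so \thmref{kodaira-one} applies and endows $\mdl Y$ with a fibration $\mdl Y\to\mdl C$ lifting the projection, for a suitable lifting $\mdl C$ of $C$; by \proref{pro-fibration-is-canoical} this fibration is intrinsic, being cut out by a suitable power of $\omega_{\mdl Y/\Lambda}$.

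It remains to descend the group action, which is the final and most delicate point. Since $G$ acts on $\mdl Y$ over $\mdl X$ and the canonical sheaf is functorial, $G$ permutes the fibres of the canonical fibration and hence acts on its base $\mdl C$, compatibly with $\mdl Y\to\mdl C$. The reduction of this action is the action induced on the base of $E\times C\to C$ by the diagonal $G$-action, which is precisely the given embedding $G\hookrightarrow\Aut(C)$; therefore $(\mdl C, G)$ is a lifting of the pair $(C,G)$, and passing to the limit over the Artinian quotients of $W$ gives a formal lifting. The main obstacle I expect is conceptual rather than computational: one must recognize that the multiple fibres obstructing a direct application of \thmref{kodaira-one} to $X$ dissolve upon passing to the free cover $E\times C$, and then check carefully that the rigidity of the canonical fibration is genuinely $G$-equivariant so that it can be carried back down through the descent — together with the verification, in the converse direction, that the translation point lifts with its exact order, which is exactly where ordinarity of $E$ is indispensable.
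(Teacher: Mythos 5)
Your proposal is correct and follows the paper's own proof essentially step for step: in the ``only if'' direction you lift the \'etale $G$-cover $E\times C\to X$ via \cite{sga1}, invoke \proref{kodaira-one-prop} to obtain the canonical fibration $\mdl X' \to \mdl C$, and descend the $G$-action through the canonical model, exactly as the paper does (the paper additionally remarks on unramifiedness of the automorphism scheme for faithfulness, but your observation that faithfulness follows from faithfulness of the reduction suffices). Your only addition is to spell out the ``if'' direction, which the paper dismisses as clear; there, be slightly careful that a point of the \'etale quotient of $\mdl E[p^a]$ lifts to a point of $\mdl E$ only when the connected--\'etale sequence splits --- this is not automatic for an arbitrary lift, but holds for a suitable choice such as the Serre--Tate canonical lift (compare the splitting criterion for (\ref{torsion})), which is harmless here since only the existence of some lifting is needed.
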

\begin{proof}
 The ``if''-part is clear. Assume there exists a lifting $\mdl X$ of $X$ over some local Artinian ring $\Lambda$ with residue field $k$. Since the covering $X' = E \times C \to X$ given by the quotient map is \'etale, we find a lifting $\mdl X' \to \mdl X$ which is again Galois with group $G$.

Now, $X'$ is an elliptic fibre bundle of Kodaira dimension one, hence by \proref{kodaira-one-prop} we know that $\mdl X'$ comes with its canonical fibration $\mdl X' \to \mdl C$ lifting $X' \to C$.
Since $\mdl C$ is the canonical model of $\mdl X'$, we get an induced $G$ action on $\mdl C$, coming from the $G$-action on $\mdl X$. Since the automorphism scheme of a curve of higher genus is unramified, we conclude that the $G$ action on $\mdl C$ is faithful.
\end{proof}

\section{Deformations of bielliptic surfaces}\label{sec-bielliptic}
Let $X$ be minimal smooth surface over $k$,  of Kodaira dimension zero and with invariants $b_1 = b_2 = 2$.
Directly from the invariants, we get that the Albanese of $X$ is an elliptic curve.
The associated map $f \colon X \to \Alb(X)$ is either a smooth elliptic fibration (see \cite[Proposition 5]{BM2}) or a quasi-elliptic fibration. In the elliptic case, we call $X$ a bielliptic surface.

To keep the presentation streamlined, we first consider the cases where $f$ has a section. 
From \proref{prop-global-structure} we know that $X$ is given by a quotient
$$ (E \times F)/G $$
where $F \to \Alb(X)$ is an \'etale Galois covering with group $G$ (i.e. an \'etale isogeny) and $G$ acts on $E$ fixing the zero section.
Note that this action cannot be trivial, for otherwise $X$ would be an abelian surface.
Without loss of generality, we assume the action of $G$ on $E$ faithful.
Since the fundamental group of an elliptic curve is abelian, $G$ has to be abelian too.
It follows that $G$ equals $\Z/d\Z$, where $d \in \{2, 3, 4, 6\}$. 
We fix the image of a generator of $G$ in $\Aut_0(E)$, and denote it by $\omega$.
As a first step, we calculate some invariants of $X$ depending on $p$ and $d$ which are important for the deformation behaviour of $X$.
\begin{prop}\label{pro-bielliptic-invariants}
Let $X$ be a bielliptic surface. Denote by $f \colon X \to C$ its Albanese map. Assume that $f$ has as section.
 Write $$X = (E \times C')/G.$$
 If $d$ is not a power of $p$ and $d \neq 2$ we have
\begin{align*}
 h^0(\Theta_X) = 1, && h^1(\Theta_X) = 1, && h^2(\Theta_X) = 0, && h^1(C, \Lie(X/C)) = 0.
\end{align*}
If $d = 2$ and $p \neq 2$ we get
\begin{align*}
 h^0(\Theta_X) = 1, && h^1(\Theta_X) = 2, && h^2( \Theta_X) = 1, && h^1(C, \Lie(X/C)) = 0.
\end{align*}
Whereas if $d$ is a power of $p$ it holds
\begin{align*}
 h^0(\Theta_X) = 2, && h^1(\Theta_X) = 4, && h^2(\Theta_X) = 2, && h^1(C, \Lie(X/C)) = 1.
\end{align*}
Let $Y/C$ be a non-Jacobian bielliptic surface with Jacobian $X/C$.
Then we have $$h^i(Y, \Theta_Y) = h^i(X, \Theta_X).$$
\end{prop}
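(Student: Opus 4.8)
The plan is to compare $Y$ and $X$ through their tangent sequences together with the Leray spectral sequence, reducing everything to data on the base $C$ and the common sheaf $\sheaf L = \Lie(X/C)$.

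First I would record the two structural inputs coming from the torsor picture. Since $Y$ is a torsor under its Jacobian $X = \Pic^0_{Y/C}$, the action of $X$ on $Y$ trivialises the relative tangent bundle, so $\Theta_{Y/C} \simeq f_Y^*\sheaf L$ exactly as in \lemref{lem-inv-sheaves}(ii); moreover $R^1 f_{Y*}\O_Y \simeq \Lie(\Pic^0_{Y/C}) = \sheaf L$, whence $R^q f_{Y*}\O_Y \simeq R^q f_*\O_X$ for all $q$. As $C$ is an elliptic curve we have $\Theta_C \simeq \O_C$, so the tangent sequence reads $0 \to f_Y^*\sheaf L \to \Theta_Y \to \O_Y \to 0$, and likewise for $X$. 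Feeding this identical Leray input into the computation of \lemref{invariants-kodaira-one} yields $h^i(Y, \Theta_{Y/C}) = h^i(X, \Theta_{X/C})$ and $h^i(Y, \O_Y) = h^i(X, \O_X)$ for every $i$, since these numbers depend only on $C$ and $\sheaf L$.

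It then remains to pass from the outer terms to the middle term, i.e. to show that the two long exact sequences have connecting maps of equal rank. Equivalently, I must compare the extension classes $e_X, e_Y \in H^1(\,\cdot\,, \Theta_{\,\cdot\,/C})$. For the Jacobian $X$ the section splits the sequence (as in the Jacobian case of \lemref{invariants-kodaira-one}), so $e_X = 0$ and $h^i(\Theta_X) = h^i(\Theta_{X/C}) + h^i(\O_X)$; hence it suffices to prove $e_Y = 0$. Because $Y \to C$ is étale-locally trivial, its Kodaira--Spencer class vanishes, placing $e_Y$ in the image of $H^1(C, \sheaf L) \hookrightarrow H^1(Y, f_Y^*\sheaf L)$. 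When $\sheaf L$ is a nontrivial torsion bundle this group already vanishes (degree zero, so $h^0(C,\sheaf L)=h^1(C,\sheaf L)=0$ on the elliptic curve $C$), and we are done. The only delicate case is the wild one, $d$ a power of $p$, where $\sheaf L \simeq \O_C$ and $H^1(C, \sheaf L) = H^1(C, \O_C) \neq 0$.

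I expect this wild case to be the main obstacle, and I would handle it equivariantly rather than by chasing the class directly. Choosing a common finite étale Galois cover $\pi \colon \tilde C \to C$ with group $\tilde G$ trivialising both fibrations, one writes $X = (E \times \tilde C)/\tilde G$ and $Y = (E \times \tilde C)/\tilde G$ for the same $E$ and $\tilde G$, the two actions differing only in that a generator acts on the $E$-factor by $x \mapsto \omega(x)$ for $X$ and by $x \mapsto \omega(x) + t(c)$ for $Y$, the section $t$ encoding the torsor. Pulling back gives $\pi^*\Theta_X \simeq \pi^*\Theta_Y \simeq \Theta_{E \times \tilde C} \simeq \O^{\oplus 2}$, and the crux is that the two $\tilde G$-linearisations induce the \emph{same} action on $H^\bullet(E \times \tilde C, \Theta_{E \times \tilde C})$: translations of the abelian scheme act trivially on $H^\bullet(\O)$, and in the wild case $\omega$ acts trivially on $\Lie(E)$. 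Feeding the resulting identical $E_2$-pages into the Hochschild--Serre spectral sequences $H^a(\tilde G, H^b(E\times\tilde C, \Theta)) \Rightarrow H^{a+b}(\,\cdot\,, \Theta_{\,\cdot\,})$ then gives the asserted equality of all $h^i$. The point requiring genuine care is exactly that the \emph{derivative} of a possibly $c$-dependent translation $t$ contributes nothing on cohomology, so that the off-diagonal term it introduces is cohomologically trivial; this is what ultimately forces $e_Y = 0$ and closes the argument.
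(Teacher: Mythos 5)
Your proposal, as written, proves less than the proposition asserts and breaks down at exactly the point you yourself flag as delicate. First, the numerical trichotomy: you reduce everything to ``data on $C$ and $\sheaf L$'', but the actual content of the paper's proof is the determination that $\sheaf L$ (equivalently $\Theta_{X/C}$) is torsion of order exactly $\ell$, the prime-to-$p$ part of $d$. Your parenthetical ``in the wild case $\omega$ acts trivially on $\Lie(E)$'' is the easy half ($\Hom(\Z/p^n\Z, \G_m) = 0$); the tame small-characteristic cases are genuinely nontrivial and are where the paper works: it passes to the height-one subgroup scheme $H \subset E$ ($\mu_p$ or $\alpha_p$), uses rigidity to compare $\ord(\omega)$ with $\ord(\omega|_H)$, and for $p=2$, $d=3$ needs the trace lemma ($\tr_\omega = 0$ on $E$) to rule out $\omega$ acting trivially on $H = \alpha_2$. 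Without this, none of the three displayed cases follows. On the positive side, your tame-case treatment of the comparison with $Y$ is correct and in fact more explicit than the paper's: the paper disposes of the non-Jacobian claim in two lines (identifying $R^1 g_* \O_Y$ and $\Theta_{Y/C}$) and silently needs the tangent sequence of $Y$ to contribute split long exact sequences; your observation that the Kodaira--Spencer class vanishes by \'etale-local triviality, so $e_Y \in H^1(C, \sheaf L) = 0$ whenever $\sheaf L$ is nontrivial, is a clean and valid way to supply this.

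The wild case, however, is where your argument fails. The ``common finite \'etale Galois cover $\tilde C \to C$ trivialising both fibrations'' with $Y = (E \times \tilde C)/\tilde G$, the action twisted by a translation, does not exist in general. The paper's own characteristic-two example shows this: there $Y = A/(\Z/2\Z)$ with $A = (E \times F)/\mu_2$ a \emph{non-split} abelian surface, and $A \to F/\mu_2$ is a torsor under the constant group $E$ whose class is the pushout along $\mu_2 \hookrightarrow E$ of the extension $0 \to \mu_2 \to F \to F/\mu_2 \to 0$; killing this class after a finite \'etale isogeny of the base would require a nonzero homomorphism from a curve isogenous to $F$ to $E$, which is impossible when $E$ and $F$ are not isogenous. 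So no finite \'etale base change presents $Y$ as a quotient of $E \times \tilde C$, and your equivariant Hochschild--Serre strategy is unavailable precisely in the case it was designed for. (Even granting such presentations, matching $E_2$-pages of two wild spectral sequences --- here $p$ divides $|\tilde G|$, so higher group cohomology and differentials are present --- does not force matching abutments; one needs an equivariant isomorphism, not merely equal cohomology actions.) A correct repair is direct: in this unique wild non-Jacobian family the covering automorphism $\tau$ of $A$ has trivial derivative in characteristic two (inversion acts as the identity on $\Lie$, translations act trivially), so the descent datum on $\Theta_A \simeq \O_A \otimes_k \Lie(A)$ is the canonical one, giving $\Theta_Y \simeq \O_Y \otimes_k \Lie(A) \simeq \O_Y^{\oplus 2}$; hence $e_Y = 0$, $h^i(\Theta_Y) = 2\,h^i(\O_Y)$, and since $R^q g_* \O_Y \simeq R^q f_* \O_X$ this agrees with $h^i(\Theta_X) = 2\,h^i(\O_X) = (2,4,2)$.
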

\begin{proof}
Since $f$ is smooth, we have an exact sequence
\begin{equation}\label{eq-differential-sheafs}
  0 \to \Theta_{X/C} \to \Theta_{X} \to f^*\Theta_{C} \to 0.
\end{equation}
Since the action of $\Zm{d}$ on $E \times C'$ is diagonal \eqref{eq-differential-sheafs} is split.
Therefore $\Theta_X$ decomposes as
\begin{equation}\label{decomposition}
 \Theta_{X} \simeq \Theta_{X/C} \oplus f^*\Theta_C. \end{equation}

Since $C$ is an elliptic curve,  $f^*\Theta_C \simeq \O_X$.
As for $\Theta_{X/C}$, it will be a torsion line bundle of order $\ell$ equal to the order of the induced action of $\Z/d\Z$ on $\Theta_{E}$.
To see this, note that $\Z/d\Z$ acts trivially on $\Theta_E^{\otimes \ell}$ because the induced action is by roots of unity. Thus a section of $\Theta_E^{\otimes \ell}$ will descend to a section of $\Theta_{X/C}^{\otimes \ell}$.
We claim that $\ord(\Theta_{X/C}) = \ell$ where $d = \ell p^n$ with $\ell$ prime to $p$. This is seen as follows:

The action on $\Theta_E$ is determined by the action on the $k$-vector space $\Lie(E)$.
If $d$ is a power of $p$, this action has to be trivial, since $\Hom(\Z/p^n\Z, \G_m) = 0$.
To determine the action in general, note that we have a subgroupscheme $H \subset E$ of height one,
such that $\Lie(H) = \Lie(H)$.
In fact, the total space of $H$ is given by $\Spec(\O_E/\ideal m_{0, E}^p)$ and $H$ is isomorphic either to $\mu_p$ or to $\alpha_p$.
Because of height one, the map given by the Lie functor $\Aut(H) \to \Aut(\Lie(H))$ is injective. In fact, it will be an isomorphism if we restrict to maps of $p$-Lie algebras (see \cite[Section 14]{AV}).

The group scheme $H$ is of rank $p$, so if $p > 4$ we get $\ord(\omega) = \ord(\omega|_H)$ by rigidity \cite[Corollary 2.7.3]{KM}.
If $p = 3$ and $d = 2$, we know that $\omega$ will act on $\Lie(E)$ as involution.
If $d = 4$ the same argument applies to $\omega^2$.

If $p = 2$ and $d = 3$ we have $H = \alpha_2$ since $j(E) = 0$. 
Assume $\omega$  induces the identity on $H$. Then the associated trace map $\tr_\omega = \Id + \omega + \omega^2$ would give multiplication by 3, which is an isomorphisms on $H$. However, we know that $\tr_\omega$ is the zero map on $E$ (see \lemref{trace-map} below).

Now, it is easy to calculate the invariants. 
Denote by $\epsilon \colon C \to X$ the zero section of $X$.
We have $\sheaf L = \Lie(X) = \epsilon^*\Theta_{X/C}$, and since $f^*\Lie(X) \simeq \Theta_{X/C}$ it follows that
$$ \ord(\Lie(X)) = \ord(\Theta_{X/C}). $$
The statement about the cohomology of $\Lie(X)$ follows, since it is a line bundle of degree zero and therefore
$$ h^1(C, \Lie(X)) = h^0(C, \Lie(X)).$$
However, the last term is not zero if and only if $\Lie(X)$ is trivial.
We also get that $$h^0(X, \Theta_X) = h^0(X, \Theta_{X/C}) + h^0(X, f^*\Theta_C)  = h^0(X, \Lie(X)) + 1.$$

To compute $h^1(X, \Theta_X)$ we treat both summands in \eqref{decomposition} separately. By \lemref{lem-inv-sheaves} (ii) we have $\Theta_{X/C} \simeq f^*R^1f_*\O_X \simeq f^*\Lie(X)$.
By the projection formula ($R^1 f_* f^* \sheaf L \simeq \sheaf L^{\otimes 2}$) and the Leray spectral sequence we get
$$ 0 \to H^1(C, \sheaf L) \to H^1(X, \Theta_{X/C}) \to H^0(C, R^1 f_*f^*\sheaf L) \to 0. $$
Thus we have:
\[
 h^1(X, \Theta_{X/C}) = \begin{cases} 0 \; \text{ if } \; \ell > 2 \\ 
                                      1  \; \text{ if } \; \ell = 2 \\ 
                                      2  \; \text{ if } \; \ell = 1 \end{cases}
\]
For $h^1(X, f^*\Theta_C)$ we obtain similarly:
$$ 0 \to H^1(C, \Theta_{C}) \to H^1(X, f^*\Theta_{C}) \to H^0(C, \sheaf L \otimes \Theta_{C}) \to 0 $$
Since $\Theta_C \simeq \O_C$ we find $h^1(X, f^*\Theta_C) = 2$ if $\sheaf L$ is trivial, and $h^1(X, f^*\Theta_C) = 1$ otherwise.

This proves the statement about $h^1(X, \Theta_X)$. To compute $h^2(X, \Theta_X)$ we just observe that $\chi(\Theta_X) = 0$, because $\chi(\Theta_{E \times C'}) = 0$.

The statement about a non-Jacobian bielliptic surface $g \colon Y \to C$ with Jacobian $X/C$ follows from the expression
\[  R^1 g_*\O_Y \simeq \Lie(X/C) \simeq \epsilon^*\Theta_{J/C} \]
and $g^*(\sheaf L) = \Theta_{Y/C}$.
\end{proof}

\subsection{The versal families}
Let $X = (E \times F)/G$ be a Jacobian bielliptic surface over $k$.
First, we study the deformation functor $\Jac_{X/C}$.

By \proref{prop-global-structure} we know the structure of Jacobian deformations of $X$.
They will be of the form $(\mdl E \times \mdl F)/\Gamma$.
Here, $\mdl E$ is a deformation of $E$ extending the automorphism $\omega$, and we are going to denote the deformation functor of such pairs by $(E, \omega)$.
Likewise, $\mdl F$ is a deformation of $F$ with a torsion point lifting the point of $F$ which appears in the definition of the action of $\Gamma$, and we denote the deformation functor of such pairs by $(F, c)$.

The functor $\Jac_{X/C}$ is isomorphic to the product of the deformation functors
$$ (E, \omega) \times (F, c). $$
To write down a versal family for $\Jac_{X/C}$, we treat the problem separately for both factors.

\subsubsection{Deforming elliptic curves with automorphisms}

Let $(\mdl E^{univ}, \omega) \to \Spec(R)$ be the universal deformation of $E$ along with its automorphism $\omega$.
This functor is indeed pro-representable: Given a lifting of $\omega$ exists for a given deformation of $E$, then it is
unique.

If $\ord(\omega) = 2$, then  $\omega$ is the involution, which obviously extends to any deformation of $E$.
Hence in that case $R = W[[t]]$.

If $\ord(\omega)  > 2$ then the $j$-invariant of $\mdl E$ is either 0 or 1728.
If the order is prime to $p$, there are no obstruction against lifting $(E, \omega)$, thus $R = W$.

We treat the remaining cases. First, assume $p = 2$ and $d = \ord(\omega) = 4$.
We know from \cite[Lemma 1.1]{L2} that there is no elliptic curve over $W$, with $j$-invariant $1728$ and good reduction.
This means we have to pass to a ramified extension of $W$. We will work over $R = W[i]$ where $i$ is a primitive forth root of unity. 
The following curve $\mdl E_2$ is taken from \cite[\S 2.A]{L2}
$$ y^2 + (-i + 1)xy - iy = x^3 - ix^2 .$$
It has $j = 1728$ and Discriminant $\Delta = 11 - 2i$, and is therefore of good reduction.

For $p = 3$ and $d = 3$, again by \cite{L2}, there will be no elliptic curve over $W$ with $j$-invariant 0 and good reduction.
So let $R = W[\pi]$, where $\pi^2 = 3$. 
Consider the elliptic curve $\mdl E_3$ given by the Weierstra\ss{} equation
$$ y^2 = x^3 + \pi x^2 + x, $$
whose $j$-invariant is 0 and whose discriminant is $\Delta = -16$. In particular, it has good reduction.

In both cases ($p = 2$ or 3), the curve $\mdl E_p$ has an automorphism of order four or three respectively, since on the generic fibre, automorphisms are given by the action of certain roots of unity, and we have chosen the base rings in such a way, that they contain the necessary roots. An automorphism of the generic fibre extends to the entire family, and its order will not change when restricting to the special fibre, as can be seen by considering an \'etale torsion subscheme of sufficiently high order.

We claim that the elliptic curves over the rings constructed above are the universal families for the deformation problem $(E, \omega)$. This follows from the fact that the respective base rings are the smallest possible extensions of $W$ over which the deformation problem can be solved,  and from the fact that an elliptic curve over a strictly henselian ring is determind by its $j$-invariant.

\subsubsection{Deforming elliptic curves with torsion points}
Now we treat the second factor.
If $p$ does not divide $d$, then a $d$-torsion point lifts uniquely to any deformation of $F$. Therefore $(F, c)$ is pro-represented by  $W[[t]]$.

Assume now that $p$ does divide $d$. By the above, we can assume $d = p^n$. Since $F$ is ordinary, we can use the theory of \emph{Serre-Tate local moduli}. For the special case of ordinary elliptic curves, see \cite[8.9]{KM}. By this theory, we can represent the deformation functor of $F$ by a pair
\[
 \mdl F^{univ} \to \Spec(W[[q - 1]])
\]
satisfying the following property:
For a complete local $W[[q - 1]]$-algebra $\Lambda$, consider the pullback
\[\mdl F = \mdl F^{univ} \otimes_{W[[q - 1]]} \Lambda. \]
By $\Z[q, q^{-1}] \to W[[q - 1] \to \Lambda$ we make $\Lambda$ into a $\Z[q, q^{-1}]$-algebra.
There exists a universal group scheme $T$ over $\Z[q, q^{-1}]$ defined in \cite[8.7]{KM} such that
\[ \mdl F[p^\infty] \simeq T[p^\infty] \otimes_{\Z[q, q^{-1}]} \Lambda. \]

The explicit description of $T$ in \cite[8.7]{KM} implies that the sequence
\begin{equation}\label{torsion}
 0 \to \mu_{p^n} \to  T[p^n] \otimes \Lambda \to \Z/p^n\Z \to 0
\end{equation}
is split if and only if the image of $q$ in $\Lambda$ has a $p^n$-th root. However, (\ref{torsion}) is split if and only if $c$ lifts to $\mdl F$.

We conclude that $W[[q - 1]][\sqrt[p^n]{q}]$ is a versal hull of the functor $(F, c)$.

\begin{prop}\label{split-families}
The versal hull $R$ of the functor $\Jac_{X/C}$ is given as follows:
\begin{center}
\begin{tabular}{l | c c c }
        & $p = 2$                                   & $p = 3$                          & $p > 3$ \\ \hline
$d = 2$ & $W[[{t_E}]] \otimes W[[q - 1]][\sqrt[2]{q}]$  & $W[[t_E]] \otimes W[[t_F]]$  & $W[[t_E]] \otimes W[[t_F]]$   \\
$d = 3$ & $W[[t_E]] \otimes W$                   & $W[\pi] \otimes W[[q - 1]][\sqrt[3]{q}] $    & $W \otimes W[[t_F]]$        \\
$d = 4$ & $W[i] \otimes W[[q - 1]][\sqrt[4]{q}]$      & $W \otimes W[[t_F]] $                   & $W \otimes W[[t_F]]$        \\
$d = 6$ & $W \otimes W[[q - 1]][\sqrt[2]{q}]$         & $W[\pi] \otimes W[[q - 1]][\sqrt[3]{q}]$   & $W \otimes W[[t_F]]$
\end{tabular}
\end{center}

\end{prop}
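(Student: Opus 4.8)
The proof is a matter of assembling the two factor computations carried out above. The plan is to use the product decomposition $\Jac_{X/C} \simeq (E,\omega) \times (F,c)$ already established, together with the general principle that a versal hull of a product of deformation functors is the completed tensor product of versal hulls of the factors. Concretely, since the tangent space of $(E,\omega)\times(F,c)$ is the direct sum of the two tangent spaces and the two obstruction theories are independent, a hull of $\Jac_{X/C}$ is $R_{(E,\omega)} \,\widehat{\otimes}_W\, R_{(F,c)}$, and it remains only to identify the two factors in each of the twelve cases of the table.

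For the first factor I would read off the hull $R_{(E,\omega)}$ from the subsubsection on deforming elliptic curves with automorphisms. For $d=2$ the automorphism $\omega$ is the involution and $R_{(E,\omega)} = W[[t_E]]$ independently of $p$; for $d \in \{3,4\}$ with $\ord(\omega)$ prime to $p$ one has $R_{(E,\omega)} = W$, while the two obstructed cases contribute $W[i]$ (for $p=2$, $d=4$) and $W[\pi]$ (for $p=3$, $d=3$). The case $d=6$ is the one requiring care: here I would decompose $\langle\omega\rangle \simeq \langle\omega^{3}\rangle \times \langle\omega^{2}\rangle$ into its order-two and order-three parts, observe that the involution $\omega^{3}$ extends to every deformation, and conclude that $R_{(E,\omega)}$ is governed entirely by the order-three factor $\omega^{2}$. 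This yields $W$ when $p=2$ or $p>3$ (order prime to $p$) and $W[\pi]$ when $p=3$, matching the table.

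For the second factor I would proceed identically with the subsubsection on torsion points. When $p \nmid d$ the $d$-torsion point lifts uniquely and $R_{(F,c)} = W[[t_F]]$; when $d = p^n$ the Serre--Tate description gives $R_{(F,c)} = W[[q-1]][\sqrt[p^n]{q}]$. For $d=6$ with $p \in \{2,3\}$ I would split $c$ into its $p$-primary and prime-to-$p$ components: the prime-to-$p$ part lifts uniquely and contributes no new parameter, while the order-$p$ part is governed by Serre--Tate, producing the factor $\sqrt[p]{q}$ and hence $R_{(F,c)} = W[[q-1]][\sqrt[p]{q}]$. Tensoring the two factors case by case then produces exactly the entries of the table (up to the cosmetic naming of the formal parameter in the unramified one-variable cases, where $W[[t_E]]\,\widehat{\otimes}_W\,W$ and $W\,\widehat{\otimes}_W\,W[[t_F]]$ are the same ring $W[[t]]$).

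I expect the genuine obstacle to be the case $d=6$ in characteristics two and three, where $p \mid d$ but $d$ is not a prime power, so that neither the ``prime to $p$'' automorphism statement nor the prime-power Serre--Tate statement applies on the nose; the fix is the $p$-primary/prime-to-$p$ decomposition of $\Z/6\Z$ indicated above. A second point worth flagging is that in characteristic two the order-three (and order-six) automorphisms force $j=0$, which admits the good-reduction model $y^2+y=x^3$ over the \emph{unramified} ring $W$, whereas the order-four case forces $j=1728$ and genuinely requires $W[i]$; this distinction is invisible on the special fibre, where $j=0=1728$, but is exactly what separates the $d=4$ and $d=6$ columns in characteristic two.
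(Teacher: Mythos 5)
Your proposal is correct and follows essentially the same route as the paper, which gives no separate proof of the proposition but lets the two preceding subsubsections (hulls of $(E,\omega)$ and $(F,c)$, combined via the product decomposition of $\Jac_{X/C}$) do the work. Your explicit $\Z/6\Z \simeq \Z/2\Z \times \Z/3\Z$ splitting for $d=6$ fills in a step the paper leaves implicit (its reduction ``by the above, we can assume $d = p^n$''), and your remark that entries such as $W[[t_E]] \otimes W$ versus $W \otimes W[[t_F]]$ agree as abstract rings $W[[t]]$ correctly accounts for the table's labelling.
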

It is easy to read off and interpret the dimension of the tangent space of the deformation functor. For example in the case where $p = 3$ and $d = 3$ we have $\dim(W[\pi] \otimes W[[q - 1]][\sqrt[3]{q}], k[\epsilon]) = 3$. There is one dimension coming from the deformations of $F$, and the rest is due to relations, coming from obstructions. As explained before, we have $h^1(X, \Theta_X) = 4$ in this case and so one dimension is still missing.

To account for this missing dimension, we have to study all deformations of $X$, not just the Jacobian ones.
This is settled by \thmref{def-functor-mapping}. 
Observe that in all the cases, we have
\[
 h^1(X, \Theta_X) - \dim(\Jac_{X/C}(k[\epsilon])) = h^1(C, \Lie(X/C)).
\]

Therefore $\dim(\Fib(k[\epsilon])) = h^1(X, \Theta_X)$, and its makes sense to ask if the absolute deformation functor of $X$ is isomorphic to $\Fib_{X/C}$. In the next section, we will see that this is indeed the case.

\subsection{Classification of deformations}

The most important step to classify deformations of bielliptic surfaces is to show that for a bielliptic surface $X/C$ over $k$ the functors $\Fib_{X/C}$ and $\Def_X$ are isomorphic.

Denote by $J \to C$ the Jacobian of $X \to C$.
If $d$ is not a power of $p$, the claim follows already, since in that case $\Jac_{J/C}$ is unobstructed, and has the right tangent dimension. 
Hence we get a chain of isomorphisms 
\[
 \Jac_{X/C} \simeq \Fib_{X/C} \simeq \Def_X.
\]

In the case where $d$ is a power of $p$, we have to work with the \'etale covering of $X$.
This is more difficult than in the Kodaira dimension one case, because the \'etale cover of the reduction is an elliptic abelian surface and not every deformation of the covering admits a fibration.

To understand the deformation theory of abelian surfaces, we use $p$-divisible groups.
For the reader's convenience, we repeat some basic definitions and facts: Let $p$ be a prime number, and let $S$ be a scheme. A sheaf of groups for the $fppf$-topology is called a \emph{p-divisible group}, if $G$ is $p$-divisible and $p$-primary, 
i.e. $$G = \varinjlim G[p^n]$$  and the groups $G[p^n]$ are finite flat group scheme over $S$ (see \cite{GroBT} where $p$-divisible groups go by the name ``Barsotti-Tate groups'').
The main examples which we are in fact interested in are $p$-divisible groups associated with abelian schemes.
For an abelian $S$-scheme $A$, we set $$ A[p^\infty] = \varinjlim A[p^n].$$

The deformation theory of abelian schemes is controlled by $p$-divisible groups.
To be precise, let $R$ be a ring in which $p^N = 0$. For a nilpotent ideal $I \subset R$ we define the category $\cat T$ of triples:
\[
 (A, \mdl G, \epsilon)
\]
where $A$ is an abelian scheme over $R/I$, $\mdl G$ is a $p$-divisible group over $R$ and $\epsilon$ an isomorphisms $\mdl G \otimes_R R/I \simeq A[p^\infty]$.
Now we have the theorem of Serre and Tate:

\begin{thm}[Theorem 1.2.1 \cite{KatzST}]\label{Serre-Tate}
 There is an equivalence between $\cat T$ and the category of abelian schemes over $R$ given by
$$ \mdl A \mapsto (\mdl A \otimes_R R/I, \mdl A[p^\infty], \text{ natural } \epsilon) .$$
\end{thm}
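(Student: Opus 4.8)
The statement is the classical theorem of Serre and Tate, so the plan is to reproduce Drinfeld's proof that the functor $\mdl{A} \mapsto (\mdl{A} \otimes_R R/I, \mdl{A}[p^\infty], \epsilon)$ is an equivalence, by establishing full faithfulness and essential surjectivity separately. First I would reduce to the essential case by d\'evissage: since $I$ is nilpotent, factor the thickening $\Spec(R/I) \hookrightarrow \Spec(R)$ into a finite chain of square-zero extensions; an equivalence for each small step composes to the general statement, so one may assume $I^2 = 0$ while keeping $p^N = 0$ in $R$.

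The technical heart is Drinfeld's rigidity lemma: for $p$-divisible groups $\mdl{G}, \mdl{H}$ over $R$ with $I^{n+1} = 0$ and $p^N = 0$, the reduction map $\Hom_R(\mdl{G}, \mdl{H}) \to \Hom_{R/I}(\mdl{G}_0, \mdl{H}_0)$ is injective and has cokernel annihilated by a fixed power of $p$ depending only on $n$ and $N$. I would prove this by observing that a homomorphism reducing to zero factors through the infinitesimal thickening, and that multiplication by $p$ contracts this thickening one step in its $I$-adic filtration; iterating $n$ times and using $p^N = 0$ forces the map to be killed by $p^{nN}$. For abelian schemes the prime-to-$p$ part of the torsion deforms uniquely by \'etale rigidity, which is exactly the topological invariance of the \'etale site recorded as the first quoted theorem of the excerpt, so all the subtlety is concentrated in the $p$-divisible group.

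Full faithfulness then follows formally: a morphism of triples gives compatible maps $A_0 \to B_0$ and $\mdl{A}[p^\infty] \to \mdl{B}[p^\infty]$, and the rigidity lemma shows these are induced by a unique $R$-homomorphism $\mdl{A} \to \mdl{B}$, the uniqueness coming from injectivity and the existence by producing the map after multiplication by a suitable $p$-power and then dividing back, using that isogenies of abelian schemes are detected on their $p$-divisible groups. Thus $\Hom_R(\mdl{A}, \mdl{B})$ is recovered as the fibre product of $\Hom_{R/I}(A_0, B_0)$ and $\Hom_R(\mdl{A}[p^\infty], \mdl{B}[p^\infty])$ over $\Hom_{R/I}(A_0[p^\infty], B_0[p^\infty])$.

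The step I expect to be the main obstacle is essential surjectivity: given a triple $(A_0, \mdl{G}, \epsilon)$ one must manufacture an honest abelian scheme $\mdl{A}/R$, not merely a formal or $p$-divisible object, whose associated $p$-divisible group is $\mdl{G}$. Here I would invoke Grothendieck--Messing crystalline deformation theory, which across the divided-power thickening $R/I \hookrightarrow R$ identifies lifts of $A_0$ with lifts of the Hodge filtration inside the Dieudonn\'e crystal of $A_0[p^\infty]$, hence with lifts of the $p$-divisible group; the chosen deformation $\mdl{G}$ prescribes exactly such a filtration. The residual work of descending a polarization to algebraize the resulting formal abelian scheme is handled by Messing's original argument. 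This crystalline input is the genuinely hard ingredient, which is why the author is content to cite the result rather than reprove it.
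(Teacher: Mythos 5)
The paper does not prove this statement at all: it is quoted as Theorem 1.2.1 of \cite{KatzST} and used as a black box, so the only meaningful comparison is with the proof in the cited source, namely Drinfeld's elementary argument as written up by Katz. Your d\'evissage to square-zero extensions, your rigidity lemma, and your deduction of full faithfulness follow that argument closely: your rigidity statement is exactly Lemma 1.1.3 of \cite{KatzST}, which the paper itself records as \lemref{pdiv-endos}, and your multiply-by-a-power-of-$p$-then-divide argument for fullness is precisely Katz's (the unique lift of $p^\nu f_0$ agrees on $p$-divisible groups with $p^\nu$ times the given map, hence kills $\mdl A[p^\nu]$). Where you genuinely diverge is essential surjectivity: Katz, following Drinfeld, manufactures the lift $\mdl A$ from the pair $(A_0, \mdl G)$ by an elementary direct construction using nothing beyond the rigidity lemma, whereas you invoke Grothendieck--Messing crystalline deformation theory. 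Your route is a standard, valid alternative --- after reduction to $I^2 = 0$ the ideal carries the trivial divided-power structure, so the crystalline theory does apply --- but two caveats. First, beware circularity: in many references the classification of lifts of the abelian scheme $A_0$ by lifts of the Hodge filtration is itself deduced from Serre--Tate together with Messing's theorem for $p$-divisible groups; to keep your argument non-circular you must take this input from a source that proves it directly for abelian schemes (Grothendieck, or Mazur--Messing via the universal vector extension). Second, your closing step about descending a polarization to algebraize a formal abelian scheme is unnecessary: since $I$ is nilpotent, $\Spec(R/I) \hookrightarrow \Spec(R)$ is an infinitesimal thickening of honest schemes, so the lift produced by the deformation theory is already a proper smooth group scheme over $R$ and no formal-to-algebraic passage (hence no polarization) is needed. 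The net trade-off: Drinfeld's construction gives a short, self-contained proof requiring only the quoted lemma, while your crystalline route makes conceptually transparent that both deformation problems are one and the same filtration-lifting problem, at the cost of much heavier machinery.
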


We will use the following statement to understand the lifting behavior of morphisms of the latter:

\begin{lemma}[Lemma 1.1.3 \cite{KatzST}]\label{pdiv-endos}
 Let $\mdl G$ and $\mdl H$ be p-divisible groups over $R$. Assume $I^{\nu + 1} = 0$. Let $G$ and $H$ denote their restrictions to $\Spec(R/I)$.  Then the following holds:
\begin{enumerate}
 \item The groups $\Hom_R(\mdl G, \mdl H)$ and $\Hom_{R/I}(G, H)$ have no $p$-torsion.
 \item The reduction map $\Hom_R(\mdl G,\mdl H) \to \Hom_{R/I}(G, H)$ is injective.
 \item For any homomorphism $f \colon G \to H$ there exists a unique homomorphism $\phi_\nu$ lifting $[p^\nu] \circ f$.
 \item In order for $f$ to lift to a homomorphism $f \colon \mdl G \to \mdl H$, it is necessary and sufficient for the homomorphism $\phi_\nu$  to annihilate $\mdl G[p^\nu]$.
\end{enumerate}
\end{lemma}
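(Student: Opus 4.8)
The plan is to isolate a single rigidity input and deduce all four assertions from it formally. I would first dispose of (i), which needs no nilpotence hypothesis: if $f\in\Hom_R(\mdl G,\mdl H)$ satisfies $pf=0$, then $f$ factors as $f=\iota\circ f'$ through the inclusion $\iota\colon\mdl H[p]\hookrightarrow\mdl H$, with $f'\colon\mdl G\to\mdl H[p]$. Since $\mdl H[p]$ is killed by $p$ we have $f'\circ[p]_{\mdl G}=[p]_{\mdl H[p]}\circ f'=0$, and since $[p]_{\mdl G}$ is an epimorphism (because $\mdl G$ is $p$-divisible) right-cancellation forces $f'=0$, hence $f=0$. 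The identical argument over $R/I$ shows $\Hom_{R/I}(G,H)$ is torsion-free.

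The heart of the matter is the following \emph{rigidity lemma} (due to Drinfeld): the reduction map $\Hom_R(\mdl G,\mdl H)\to\Hom_{R/I}(G,H)$ is injective, and its image contains $p^\nu\cdot\Hom_{R/I}(G,H)$. Injectivity is exactly (ii). Granting the divisibility statement, (iii) is immediate: for $f\colon G\to H$ the element $p^\nu f$ lies in the image, so admits a lift $\phi_\nu\colon\mdl G\to\mdl H$ reducing to $[p^\nu]\circ f$, and this lift is unique by (ii). Finally (iv) is formal. If $f$ extends to $\tilde f\colon\mdl G\to\mdl H$, then $p^\nu\tilde f=\tilde f\circ[p^\nu]_{\mdl G}$ is a lift of $p^\nu f$, so $\phi_\nu=\tilde f\circ[p^\nu]_{\mdl G}$ by uniqueness, whence $\phi_\nu$ annihilates $\mdl G[p^\nu]$. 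Conversely, if $\phi_\nu$ annihilates $\mdl G[p^\nu]$ it factors through the isomorphism $[p^\nu]_{\mdl G}\colon\mdl G/\mdl G[p^\nu]\xrightarrow{\sim}\mdl G$ (using $p$-divisibility) as $\phi_\nu=\tilde f\circ[p^\nu]_{\mdl G}$; reducing mod $I$ gives $p^\nu(\tilde f\bmod I-f)=0$, so $\tilde f\bmod I=f$ by the torsion-freeness (i).

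To establish the rigidity lemma I would dévissage along the finite filtration $R\to R/I^\nu\to\dots\to R/I$, reducing everything to a single \emph{square-zero} thickening $R'\to R'_0$ with square-zero kernel $I'$: across such a step one must show that the reduction of homomorphisms is injective and that $p\cdot\Hom_{R'_0}(G',H')$ lies in the image, and iterating the $\nu$ steps then produces the factor $p^\nu$. For the square-zero step I would invoke the deformation theory of $p$-divisible groups: both the obstruction to lifting a homomorphism $f_0$ and the difference of any two lifts are governed by $R'_0$-linear data built from the Lie algebras (equivalently, the evaluations of the Grothendieck--Messing crystals of $G'_0$ and $H'_0$) tensored with $I'$. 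The decisive point is that on this crystalline datum multiplication by $p$ factors as the composite of Frobenius and Verschiebung, $VF=FV=p$, and $V$ carries the crystal into the Hodge filtration; consequently passing from $f_0$ to $pf_0$ sends both the obstruction class and the lift-difference into the part that is forced to vanish, so $pf_0$ lifts and its lift is unique.

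I expect this square-zero input to be the genuine obstacle; parts (i)--(iv) above are then pure bookkeeping. The subtle claim is precisely that one square-zero extension \emph{costs exactly one} factor of $p$ rather than a power growing with the $p$-torsion level. For naive additive characters this is false, and it becomes true only through the relation $VF=FV=p$ on the Dieudonné crystal. Carrying this out rigorously in mixed characteristic, where no honest Frobenius is available on the groups themselves, is what forces one into the crystalline Dieudonné formalism, and is the part I would expect to consume essentially all of the effort.
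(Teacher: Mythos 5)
The first thing to note is that the paper contains no proof of this statement at all: it is imported verbatim, with attribution, from Katz's Serre--Tate paper (Lemma 1.1.3 of \cite{KatzST}), whose proof rests on Drinfeld's rigidity lemma. So the comparison is against that standard argument. Your outer layer agrees with it and is correct: (i) follows since $[p]$ is an fppf epimorphism of a $p$-divisible group (your detour through $\mdl H[p]$ is fine, though $pf=0$ gives $f\circ[p]_{\mdl G}=0$ and hence $f=0$ directly); (iii) follows from injectivity plus the claim that $p^\nu\cdot\Hom_{R/I}(G,H)$ lies in the image of reduction; and both directions of (iv), including the use of (i) over $R/I$ to cancel $p^\nu$, are exactly Katz's bookkeeping.

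The genuine gap is the rigidity core itself, which you do not prove, and the crystalline route you sketch for it is both heavier than the intended argument and quantitatively wrong at its key step. Katz's proof is elementary: he shows (his Lemma 1.1.2) that a $p$-divisible group $\mdl H$ is formally smooth and that the congruence kernel $\widehat{\mdl H}_I(A)=\Ker(\mdl H(A)\to \mdl H(A/IA))$ is killed by a fixed power of $p$ --- the \'etale part is rigid, and on the connected part, a formal Lie group, one uses the classical identity $[p](X)=pf(X)+g(X^{p})$, so that with $p$ nilpotent the $p$-th power map improves $I$-adic congruences. Given this, the canonical lift of $p^\nu f$ is constructed pointwise: lift $x\bmod I$, apply $f$, lift back along formal smoothness, and multiply by $p^\nu$ to kill the ambiguity; injectivity (ii) is then immediate since a homomorphism lifting $0$ takes values in $\widehat{\mdl H}_I$. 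No Grothendieck--Messing theory enters. By contrast, your claim that one square-zero step ``costs exactly one factor of $p$'' because $VF=FV=p$ is unsubstantiated, and the naive accounting behind it fails in mixed characteristic: over $R'=(\Z/p^2\Z)[\epsilon]/(\epsilon^2)$ with square-zero ideal $I'=(\epsilon)$, the congruence subgroup of $\mu_{p^\infty}$ consists of the elements $1+\epsilon a$, and $(1+\epsilon a)^p=1+p\epsilon a\neq 1$, so the ambiguity of lifts across a single square-zero step need not be $p$-torsion, and iterating $\nu$ steps does not visibly yield the exponent $p^\nu$. Moreover, in characteristic $p$, where your Frobenius--Verschiebung mechanism actually lives, the vanishing you need is trivial (the relevant modules of the form $\Lie\otimes I'$ are already killed by $p$), so the relation $VF=p$ on the Dieudonn\'e crystal is doing no work there either. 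In short: parts (i)--(iv) are correct formal consequences, but the heart of the lemma --- precisely the part you defer --- is missing, and the proposed machinery would not supply it as sketched; the elementary congruence estimate is where Katz's proof carries its content.
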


In the course of the proof, we will use the following lemma:

\begin{lemma}\label{trace-map}
Let $\mdl E$ be an elliptic scheme over a base scheme $\mdl S$. Let $\Omega$ be an automorphism of $\mdl E$ of order $d$. We consider the trace map:
\[
 \tr_\Omega = \Id + \Omega + \dots + \Omega^{d-1}.
\]
Then $\tr_\Omega$ gives the zero map on $\mdl E$.
\end{lemma}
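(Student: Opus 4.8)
The plan is to reduce the claim to a single cancellation in an endomorphism ring, where the classical structure theory of endomorphisms of elliptic curves takes over. Write $T := \tr_\Omega = \sum_{i=0}^{d-1}\Omega^i$; this is an endomorphism of the abelian scheme $\mdl E/\mdl S$, since each $\Omega^i$ is one and the sum is formed with the group law. I assume $\Omega \neq \Id$ (equivalently $d > 1$); for $d = 1$ the statement is false, but this case never occurs in the application within \proref{pro-bielliptic-invariants}, where $\omega$ is a nontrivial automorphism.

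The only algebraic input is the telescoping identity $(\Omega - \Id)\circ T = \Omega^{d} - \Id = 0$ in $\operatorname{End}_{\mdl S}(\mdl E)$, immediate from $\Omega^d = \Id$ and the fact that $\Omega$ commutes with its own powers. Everything then comes down to cancelling the factor $\Omega - \Id$, so the real content is that the relevant endomorphism ring has no zero divisors and that $\Omega - \Id$ is nonzero.

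To see both of these I would pass to a geometric fibre. We may assume $\mdl S$ connected, and I would invoke the injectivity of the specialisation map $\operatorname{End}_{\mdl S}(\mdl E) \to \operatorname{End}(\mdl E_{\bar s})$ at a geometric point $\bar s$. Over the Artinian bases occurring here this is rigidity for homomorphisms of abelian schemes, which follows from Serre--Tate theory together with the injectivity in \lemref{pdiv-endos} (applicable because $p$ is nilpotent on such bases). This injection yields two things at once: the target $\operatorname{End}(\mdl E_{\bar s})$ is an order in a division algebra (namely $\Q$, an imaginary quadratic field, or a definite quaternion algebra) and hence has no zero divisors, so neither does the subring $\operatorname{End}_{\mdl S}(\mdl E)$; and $\Omega \neq \Id$ forces $\Omega_{\bar s} \neq \Id$, whence $\Omega - \Id \neq 0$. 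In a ring without zero divisors the relation $(\Omega - \Id)\circ T = 0$ with $\Omega - \Id \neq 0$ then gives $T = 0$. (If one only wants the statement over a field, one can argue directly: the $\ell$-adic representation on $T_\ell \mdl E$ is faithful, $\Omega$ acts by a finite-order matrix $M$ with $M^d = I$, and $\sum_i M^i = 0$ precisely because $1$ is not an eigenvalue of $M$ --- an eigenvalue $1$ would force $\Omega = \Id$.)

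The genuinely delicate point is the globalisation from the fibre to all of $\mdl S$, since over a field the statement is entirely classical. I expect this to be the only real obstacle, and it is bookkeeping rather than substance: it amounts to selecting the correct form of rigidity available over the possibly non-reduced, mixed-characteristic Artinian bases at hand, for which \lemref{pdiv-endos} is exactly the required tool.
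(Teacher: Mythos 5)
Your proof is correct, and it is built on the same one-line telescoping identity as the paper's proof, namely $(\Id - \Omega)\circ \tr_\Omega = \Id - \Omega^{d} = 0$; the difference lies entirely in how the factor $\Id - \Omega$ is cancelled. The paper reduces fibrewise to a field and cancels geometrically: $\Id - \Omega$ is a nonzero endomorphism of an elliptic curve, hence surjective, and since it commutes with $\tr_\Omega$ one gets $\tr_\Omega \circ (\Id - \Omega) = 0$, so $\tr_\Omega$ vanishes on all of $\mdl E$. You cancel ring-theoretically instead: the specialisation map $\operatorname{End}_{\mdl S}(\mdl E) \to \operatorname{End}(\mdl E_{\bar s})$ is injective, the target is an order in a division algebra and so has no zero divisors, and $\Omega \neq \Id$ survives specialisation by the same injectivity. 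These are close cousins — integrality of $\operatorname{End}$ over a field is precisely the fact that nonzero endomorphisms are isogenies, which is also what makes $\Id - \Omega$ surjective — but your route buys something real: the paper's opening claim ``we can prove the statement fibrewise'' is not automatic over the non-reduced bases where the lemma is meant to live, and your rigidity input (Serre--Tate together with \lemref{pdiv-endos}) is exactly what legitimises that reduction over the Artinian bases occurring in \proref{bielliptic-obstructions}; note only that \lemref{pdiv-endos} needs $p$ nilpotent, so for the lemma as literally stated (arbitrary connected $\mdl S$) one would instead cite the unramifiedness of the Hom scheme of abelian schemes, though every application in the paper is over a field or an Artinian base, so nothing is lost. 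The price of your approach is heavier machinery for a two-line lemma; over a field, your parenthetical $\ell$-adic argument and the paper's surjectivity argument are equally elementary. You are also right that $d > 1$ must be assumed (for $d = 1$ the trace is $\Id$), a hypothesis the paper uses tacitly when it declares $\Id - \Omega$ surjective.
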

\begin{proof}
 We can prove the statement fibrewise. So assume that $S$ it the spectrum of a field.
It holds \[ (\Id - \Omega) \circ \tr_\Omega = \Id - \Omega^d = 0 \]
However, $\Id - \Omega$ is surjective, hence $\tr_\Omega = 0$ follows.
%
%
\end{proof}

\begin{prop}\label{bielliptic-obstructions}
Let $f \colon X \to C$ be a Jacobian bielliptic surface over $k$.
Then $f$ extends to any deformation $\mdl{X}$ of $X$ over $\Lambda \in \Alg_W$.
\end{prop}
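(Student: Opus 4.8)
The plan is to reduce the problem to a statement about the étale covering $X' = E \times F \to X$ and the deformation theory of the abelian surface $X'$, mirroring the strategy already used in \proref{kodaira-one-prop} but now accounting for the fact that $X'$ is itself an abelian (elliptic) surface rather than a Kodaira dimension one bundle. First I would take an arbitrary deformation $\mdl X$ of $X$ over $\Lambda$ and, using the equivalence of étale sites under infinitesimal extensions (the geometric Hensel lemma stated in Section~2), lift the Galois covering to obtain $\mdl X' \to \mdl X$, again Galois with group $G = \Z/d\Z$ where $d = p^n$. The surface $X'$ is the abelian surface $E \times F$, so its deformations are governed by \thmref{Serre-Tate} and \lemref{pdiv-endos}. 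The goal is to show that the $G$-action and the product (hence fibration) structure propagate to $\mdl X'$, and then descend the resulting fibration $\mdl X' \to \mdl C'$ to a fibration $f \colon \mdl X \to \mdl C$ via the induced $G$-action, exactly as in the final diagram of \proref{kodaira-one-prop}.

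The heart of the argument is to understand $\mdl X'$ as a deformation of the abelian surface $X' = E \times F$ carrying a lift of the order-$d$ automorphism $\omega$ acting diagonally. Via Serre--Tate theory, deforming $X'$ is the same as deforming its $p$-divisible group $X'[p^\infty] = E[p^\infty] \times F[p^\infty]$, and the automorphism $\omega$ corresponds to an endomorphism of this $p$-divisible group. The key point I would exploit is \lemref{pdiv-endos}: homomorphisms of $p$-divisible groups have no $p$-torsion and inject under reduction, and a reduction-level homomorphism lifts precisely when a certain multiple annihilates the appropriate torsion. Here \lemref{trace-map} becomes essential, since the vanishing of $\tr_\omega$ on $\mdl E$ (equivalently on the $p$-divisible group) constrains the possible lifts of $\omega$ and forces them to remain automorphisms of order $d$ rather than degenerating. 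Combined with the faithfulness of the action on the Lie algebra analyzed in \proref{pro-bielliptic-invariants}, this should show that any deformation $\mdl X'$ is again a product $\mdl E \times \mdl F$ with a diagonal $\Z/p^n\Z$-action, so that it automatically carries the fibration $\mdl X' \to \mdl F \to \mdl C'$.

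The main obstacle I anticipate is proving that the deformed abelian surface $\mdl X'$ actually splits as a product compatibly with the $G$-action, i.e.\ that the product structure is rigid enough to persist. In characteristic zero this would be immediate, but here the possibility that $\mdl X'$ deforms into a non-split abelian surface, or that $\omega$ fails to lift to a genuine order-$d$ automorphism, must be excluded using the $p$-divisible group machinery. Once one knows $\mdl X' \cong \mdl E \times \mdl F$ with the diagonal action, the fibration $g \colon \mdl X' \to \mdl C'$ onto the Serre--Tate deformation $\mdl C' = \mdl F$ of the second factor is $G$-equivariant, and the quotient yields $f \colon \mdl X \to \mdl C$; the descent is routine since $G$ is finite. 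I would close by remarking that, unlike the Kodaira dimension one situation, here the fibration is \emph{not} canonically determined (the Kodaira dimension is zero), so the descent relies genuinely on tracking the $G$-action rather than invoking the canonical model.
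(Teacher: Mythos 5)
Your overall skeleton (lift the \'etale Galois cover, view the cover as an abelian scheme, work with $p$-divisible groups via \thmref{Serre-Tate} and \lemref{pdiv-endos}, then descend the fibration along the $G$-action) is the same as the paper's, but your argument has a genuine gap at exactly the decisive step. You reduce everything to the claim that any deformation $\mdl A$ of $A = E \times F$ arising from $\mdl X$ splits as $\mdl E \times \mdl F$ with a diagonal $\Zm{p^n}$-action, and you offer no mechanism for this beyond citing \lemref{trace-map} and the Lie-algebra analysis, which do not produce a splitting. In the strict form you state it (a zero-fixing lift of $\omega$ on one factor, translation on the other) the claim is in fact false: such a splitting would give $\mdl X$ a section, namely the image of $\{0\} \times \mdl F$, so \emph{every} deformation of $X$ would be Jacobian; but for $d = p^n$ one has $h^1(C, \Lie(X/C)) = 1$ (\proref{pro-bielliptic-invariants}), so by \thmref{def-functor-mapping} non-Jacobian deformations exist --- the paper explicitly notes this phenomenon. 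Note also that in the obstructed cases $E$ is supersingular ($j = 0$ in characteristic $3$, $j = 1728$ in characteristic $2$), so $A$ is not ordinary and ordinary Serre--Tate coordinates are unavailable; and forcing lifts of $\omega$ to have order $d$ is the easy part (lifts of homomorphisms are unique, so the order is automatic once a group-automorphism lift exists), not the obstacle.

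The paper's proof avoids splitting altogether. First --- a step you skip --- the lifted Galois generator $\sigma$ need not fix the zero section; one sets $c = \sigma(0)$ and $\Omega = \sigma \circ t_{-c} = t_{-c} \circ \sigma$ (equal by uniqueness of lifts), which shows $\Omega$ is a group automorphism and $c$ a $d$-torsion section lifting the translation point. Then, instead of splitting $\mdl A$, one lifts the projection up to multiplication: by \lemref{pdiv-endos} there is a unique lift $\phi_N$ of $[N] \circ \pr_2$ on $p$-divisible groups; since $\tr_\Omega$ restricts on $A[p^\infty]$ to $[d] \circ \pr_2$ by \lemref{trace-map}, uniqueness gives $[d] \circ \phi_N = [N] \circ \tr_\Omega$, so $\tr_\Omega$ lifts $[d] \circ \pr_2$. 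The crux, entirely absent from your sketch, is then to show $\tr_\Omega(\mdl A[d]) = 0$, which the paper does by filtering $\mdl A[d]$ through its multiplicative, biinfinitesimal and \'etale pieces and using unique lifting of endomorphisms on each (Cartier duality for the multiplicative part, uniqueness of the lift of $E[p^\infty]$ for the biinfinitesimal part, triviality for the \'etale part); only then does \thmref{Serre-Tate} convert the lifted projection into a fibration that descends to $\mdl X$. A posteriori a weak form of your splitting is recoverable --- the resulting lift $\psi$ of $\pr_2$ is idempotent by uniqueness of lifts, whence $\mdl A \simeq \Ker(\psi) \times \Im(\psi)$, though with a possibly nonzero translation component on the $\mdl E$-factor --- but its proof passes through precisely the trace argument you left unproved, so it cannot serve as your starting point.
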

\begin{proof}
We have an \'etale Galois cover $A = E \times F$ of $X$. The Galois group is isomorphic to $\Gamma = \Z/d\Z$.
The action on $E$ is by an automorphism $\omega$, fixing the zero section.
For a deformation $\mdl X$ of $X$, we get a diagram
\begin{equation*}
 \xymatrix{ A \ar[d] \ar[r]  &  \mdl A \ar[d] \\
            X \ar[r]         & \mdl{X} }
\end{equation*}
where the right hand column is the unique lifting of the left. By \cite[Theorem 6.14]{GIT} we can give $\mdl A$ a structure of an abelian scheme, extending the group structure on $A$. 
Now the strategy is as follows: First, we show that $\mdl A$ has an automorphism $\Omega$ lifting $\Id \times \omega$.
 Then we study the action of $\Omega$ on the $p$-divisible group $\mdl A[p^\infty]$, and use the trace map defined by $\Omega$ to lift the projection $E[p^\infty] \times F[p^\infty] \to F[p^\infty]$. This lifting will descend to the desired lifting of the fibration $f$ on $X$.

Denote the image of the generator $1 \in \Gamma$ in $\Aut(\mdl A)$ by $\sigma$. Note that $\sigma$ does not necessarily fix the zero section.
We study its action on $\mdl A$:
Set $c = \sigma(0) \in \mdl A(\Spec(\Lambda))$ and denote by $t_{-c}$ the morphism given by translation the $-c$. We set
\begin{equation*}
  \Omega = \sigma \circ t_{-c} \;\text{ and }\; \Omega' = t_{-c} \circ \sigma .
\end{equation*}
 Both maps fix the zero section of $\mdl A$ and are therefore group automorphisms of $\mdl A$. Furthermore, they lift the automorphism $\Id \times \omega$ of $E \times F$, which implies $\Omega = \Omega'$ since the lift of an automorphism is unique.

This means that $\Omega$ and $t_c$ commute, and since $\sigma$ and $\Omega$ are of order $d$, we get that $c$ is a torsion point of order $d$, which lifts the action of $\Gamma$ by translation on $F$.

To proceed with the proof, we pass to the category of $p$-divisible groups, as explained in \thmref{Serre-Tate}.
Our aim is to lift the second projection $$\pr_2 \colon E[p^\infty] \times F[p^\infty] \to F[p^\infty].$$
We know there exists some integer $N$ such that there exists a unique lift $\phi_N$ of  $[N] \circ \pr_2$ (\lemref{pdiv-endos}).
We compare $\phi_N$ with the trace  $\tr_\Omega$  defined by $\Omega$.
The restriction $\overline{\tr_\Omega}$ of $\tr_\Omega$ to $A[p^\infty]$ gives the map 
$$[d] \circ \pr_2 \colon A[p^\infty] \to F[p^\infty] = \Im(\overline{\tr_\Omega})$$
because $\tr_\Omega$ is  multiplication by $d$ on the factor $F[p^\infty]$ and the zero map on the factor $E[p^\infty]$ (see \lemref{trace-map}).
Now, we get that $[d] \circ \phi_N$ is a lift of $[N] \circ \overline{\tr_\Omega}$.
Again, because an endomorphisms has at most one lifting, it follows
$$[d] \circ \phi_N = [N] \circ \tr_\Omega.$$
Factoring out by $\mdl A[N]$, we see that $\tr_\Omega$ is a lift of $[d] \circ [\pr_2]$.
It remains to show that $\mdl A[d]$ lies in the kernel of $\tr_\Omega$.
To see this, we consider the exact sequence of finite flat group schemes
$$ 0 \to \mdl A[d]^0 \to \mdl A[d] \to \mdl A[d]^{et} \to 0. $$
We first show $\tr_\Omega(\mdl A[d]^0) = 0 $. Again, we have an exact sequence
$$ 0 \to  \mdl A[d]^{mult} \to \mdl A[d]^0 \to \mdl A[d]^{bi} \to 0. $$
The outer groups denote the multiplicative part and the biinfinitesimal part respectively.
The category of multiplicative groups schemes is dual to the category of \'etale group schemes via Cartier duality - thus endomorphisms lift uniquely, and we get $\tr_\Omega(\mdl A[d]^{mult}) = 0$.
Now, we consider the sequence of $p$-divisible groups
 $$ 0 \to \mdl A[p^\infty]^{mult} \to \mdl A[p^\infty]^0 \to \mdl A[p^\infty]^{bi} \to 0. $$
Since $\Omega$ maps $\mdl A[p^\infty]^{mult}$ into itself, we get an induced action of $\Omega$ on $\mdl A[p^\infty]^{bi}$, and in particular, $\tr_\Omega$ descends to $\mdl A[p^\infty]^{bi}$. If this group is non-trivial, it is a lift of $E[p^\infty]$ on which $\tr_\Omega$ is zero. Again by uniqueness of lifts, we get $\tr_\Omega(\mdl A[d]^{bi}) = 0$.

We saw that $\tr_\Omega(\mdl A[d]^0) = 0$ and it remains to show $\tr_\Omega(\mdl A[k]^{et}) = 0$. However, this is clear, since we deal with \'etale group schemes.
We conclude that $\pr_2$ extends to $\mdl X$.
\end{proof}

So far, we have treated only Jacobian bielliptic surfaces. But the non-Jacobian cases are mostly trivial.
Consulting the table of bielliptic surfaces in \cite{BM2}, we see that the  Tate-{\v{S}}afarevi{\v{c}} group is trivial if the associated Jacobian has obstructed deformations, except in one case in charactertic two.

To construct this surface, let $E$ and $F$ be ordinary elliptic curves over $k$ with $p = 2$. We set $A = (E \times F)/ \mu_2$, where $\mu_2$ is the subgroup scheme embedded diagonally into 
$$(E \times F)[2]^0  \simeq \mu_2 \times \mu_2. $$
The quotient $A$ is an abelian surface which does not split into a product.

Let $c$ be a non trivial 2-torsion point of $F$.
The action on $E \times F$, given by $(x, y) \mapsto (x + c, -y)$, commutes with the diagonal action of $\mu_2$ and thereby descends to a $\Z/2\Z$-action on $A$. The bielliptic surface $X$ we are interested in is now given by $A /(\Z/2\Z)$.
The Jacobian of $X$ is clearly $J = (E \times F)/(\Z/2\Z)$.

Now let $\mdl{X}$ be a deformation of $X$. Once more we have a diagram:
\begin{equation*}
 \xymatrix{ A \ar[d] \ar[r]  & \mdl A \ar[d] \\
            X \ar[r]         & \mdl{X} }
\end{equation*}

We claim that $\mdl A$ admits a lifting of the elliptic fibration $f \colon A \to F/\mu_2$:
We have an exact sequence
\begin{equation}\label{ordinary-decomposition}
 0 \to \mdl A[p^\infty]^{tor} \to \mdl A[p^\infty] \to \mdl A[p^\infty]^{et} \to 0 .
\end{equation}
The morphism $A[p^\infty]^{et} \to F[p^\infty]^{et}$ induced by $f$ lifts uniquely to $$\varphi \colon \mdl A[p^\infty]^{et} \to \mdl F[p^\infty]^{et},$$
since we are dealing with \'etale group schemes. Denote by $\mdl B$ the p-divisible group obtained by pushout of (\ref{ordinary-decomposition}) along $\varphi$.
We still have $\mdl A[p^\infty]^{tor} \subset \mdl B$ and inside $\mdl A[p^\infty]^{tor}$ is contained the kernel of the unique lift of $A^{tor} \to F[p^\infty]^{tor}$.
Dividing out $\mdl B$ by that kernel we obtain a lifting of $f$.

As in the proof of \thmref{bielliptic-obstructions}, we see that $f$ descends to $\mdl{X}$. Therefore $\mdl{X}$ is elliptic. To sum up, we have the following theorem:

\begin{thm}\label{thm-bielliptic-fib-lift}
 Every deformation $X$ of a bielliptic surface $X$ induces a lifting of the elliptic fibration $X \to C = \Alb(X)$.
\end{thm}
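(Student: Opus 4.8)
The plan is to assemble the theorem from the case analysis already set up, promoting the various ``the fibration lifts'' statements into surjectivity (indeed bijectivity) of the forgetful map $\Fib_{X/C} \to \Def_X$. Writing $X = (E \times F)/G$ with $G = \Z/d\Z$ as in \proref{prop-global-structure}, the natural dichotomy is whether $d$ is a power of $p$, and, inside the $p$-power case, whether the Albanese fibration $f \colon X \to C = \Alb(X)$ is Jacobian.

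If $d$ is not a power of $p$, I would argue purely on the level of deformation functors. By \corref{cor-obstructions-Jacobian-case} (equivalently, by reading off the versal hulls in \proref{split-families}) the functor $\Jac_{J/C}$ of the associated Jacobian $J$ is unobstructed, so the formal smoothness of $\Fib_{X/C} \to \Jac_{J/C}$ from \thmref{def-functor-mapping} makes $\Fib_{X/C}$ smooth, of tangent dimension $\dim \Jac_{J/C}(k[\epsilon]) + h^1(C, \Lie(J/C))$. By the invariant table in \proref{pro-bielliptic-invariants} this is exactly $h^1(X, \Theta_X) = \dim \Def_X(k[\epsilon])$. Since $f$ is the Albanese morphism it is rigid up to the translations of the elliptic base that are already absorbed into the isomorphism relation defining $\Fib_{X/C}$, so the forgetful map $\Fib_{X/C} \to \Def_X$ is injective. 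A formally smooth subfunctor of $\Def_X$ which accounts for the full tangent space must coincide with $\Def_X$, and in particular every deformation of $X$ carries the fibration.

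If $d$ is a power of $p$ I would split according to whether $f$ has a section. The Jacobian case is precisely \proref{bielliptic-obstructions}, which shows $f$ extends to every $\mdl X$. For the non-Jacobian case I would invoke the classification in \cite{BM2}: the relevant Tate-{\v{S}}afarevi{\v{c}} group $H^1(C, J)$ vanishes in all $p$-power cases with obstructed Jacobian, forcing $X$ to coincide with $J$ and reducing to the previous step, with the single exception in characteristic two of $X = A/(\Z/2\Z)$, $A = (E \times F)/\mu_2$. That surface is dealt with by the explicit argument already given: lift the \'etale cover $A$ to an abelian scheme $\mdl A$, use the connected--\'etale and toric--biinfinitesimal splitting of $\mdl A[p^\infty]$ together with \lemref{pdiv-endos} and \thmref{Serre-Tate} to lift $\pr_2$ to a fibration on $\mdl A$, and descend it through the $\Z/2\Z$-quotient to $\mdl X$.

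The main obstacle is this last, non-split $p$-power case in characteristic two. Unlike the Kodaira dimension one setting of \thmref{kodaira-one}, where canonicity pinned the fibration down and the \'etale cover was itself a fibre bundle, here the \'etale cover of a deformation is a possibly non-split abelian surface on which no fibration is visible a priori; one genuinely needs Serre--Tate theory and an analysis of how the lifted automorphism acts, through its trace map (\lemref{trace-map}), on the filtration of $\mdl A[p^\infty]$. Collecting the three cases shows that $f$ lifts to every deformation, that is, every deformation of a bielliptic surface is again bielliptic.
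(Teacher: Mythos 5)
Your proposal is correct and follows the paper's own route essentially verbatim: the same three-way split into $d$ not a power of $p$ (handled via unobstructedness of $\Jac_{J/C}$, the formal smoothness of $\Fib_{X/C} \to \Jac_{J/C}$ from \thmref{def-functor-mapping}, and the tangent-dimension match from \proref{pro-bielliptic-invariants} and \proref{split-families}), the Jacobian $p$-power case via \proref{bielliptic-obstructions}, and the non-Jacobian $p$-power case via the Bombieri--Mumford table with the single exceptional characteristic-two surface $A/(\Z/2\Z)$, $A = (E \times F)/\mu_2$, treated by the identical connected--\'etale pushout argument on $\mdl A[p^\infty]$ followed by descent. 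Your added remark on injectivity of $\Fib_{X/C} \to \Def_X$ in the first case only makes explicit what the paper leaves implicit, so there is no substantive difference in approach.
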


Next, we show that a versal deformation of a bielliptic surface is algebraizable.

\begin{prop}\label{algebraization}
Let $X$ be a bielliptic surface over $k$. Denote by $\mdl X^{vers} \to \Spf(R)$ a formal versal family of $\Def_X$.
Then there exists a projective scheme $\overline{\mdl X}$ over $R$ such that $\mdl X^{vers}$ is the completion of $\overline{\mdl X}$ at the special fibre.
\end{prop}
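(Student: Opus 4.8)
The plan is to reduce the statement to Grothendieck's algebraization theorem, which asserts that a proper formal scheme over $\Spf(R)$ carrying an invertible sheaf whose restriction to the closed fibre is ample is the formal completion of a projective $R$-scheme. Thus everything comes down to producing, on the formal family $\mdl X^{vers}$, an invertible sheaf that is ample on the special fibre $X$. Since the canonical sheaf of a bielliptic surface is only torsion it cannot serve this purpose, and a naive lift of an ample bundle on $X$ is in general obstructed: the obstruction sits in $H^2(X,\O_X)$, which can be non-zero for bielliptic surfaces in small characteristic. Instead I would build the ample sheaf on the étale abelian cover and descend it.

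First I would pass to the cover. As in \secref{sec-bielliptic}, write $X = A/\Gamma$ with $A$ an abelian surface and $\Gamma$ a finite group acting freely, and observe that over the formal versal base the étale cover lifts to a formal étale Galois cover $\mdl A \to \mdl X^{vers}$ with group $\Gamma$; at each Artinian level $\mdl A_n$ is an abelian scheme carrying the lifted $\Gamma$-action (by \cite[Theorem 6.14]{GIT} together with \thmref{Serre-Tate}). Hence $\mdl A$ is a formal abelian scheme and $\mdl X^{vers} = \mdl A/\Gamma$.

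The key step is to algebraize the cover: I would produce an abelian scheme $\overline{\mdl A}$ over $R$, with its $\Gamma$-action, whose formal completion along the special fibre is $\mdl A$. This is where the real work lies, since algebraizing a formal abelian scheme amounts to lifting a polarization, and a single ample class on $A$ need not lift. The general fact is the algebraizability of formal abelian schemes (see \cite[\S 6]{GIT}); in the present situation projectivity is transparent, because the lifted second projection constructed in the proof of \proref{bielliptic-obstructions} exhibits $\mdl A$ as an extension $0 \to \mdl E \to \mdl A \to \mdl F \to 0$ of formal elliptic curves. Formal elliptic curves are algebraizable (there is no obstruction, as $H^2(\O)$ vanishes on a curve), so $\mdl E$ and $\mdl F$ come from elliptic curves $\overline{\mdl E}, \overline{\mdl F}$ over $R$, and a polarization on $\overline{\mdl A}$ is built from those of the factors; the $\Gamma$-action algebraizes by the rigidity of homomorphisms of abelian schemes (equivalently via the equivalence of \thmref{Serre-Tate}). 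This is the main obstacle of the proof.

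Finally I would descend an ample sheaf and conclude. Choosing any ample invertible sheaf $\mdl L$ on $\overline{\mdl A}$, the $\Gamma$-invariant sheaf $\bigotimes_{\gamma \in \Gamma} \gamma^{*}\mdl L$ is again ample; a suitable power admits a $\Gamma$-linearization, the obstruction lying in $\Hom(\Gamma, \G_m)$ and being killed by passing to a power, and since $\Gamma$ acts freely this power descends to an ample invertible sheaf on the projective $R$-scheme $\overline{\mdl X} := \overline{\mdl A}/\Gamma$. Because formal completion commutes with the free finite quotient, the completion of $\overline{\mdl X}$ along its special fibre is $\mdl A/\Gamma = \mdl X^{vers}$, so $\mdl X^{vers}$ is algebraized by the projective scheme $\overline{\mdl X}$. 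Once the algebraization of the abelian cover is in hand, this descent is routine.
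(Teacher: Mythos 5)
Your overall strategy (produce a formal ample sheaf and invoke Grothendieck's existence theorem) matches the paper's, but your route to the ample sheaf --- algebraize the abelian cover $\mdl A$ and descend --- breaks down at exactly the step you flag as the ``main obstacle''. First, the ``general fact'' you appeal to is false: formal abelian schemes over a complete local noetherian ring are \emph{not} algebraizable in general (the universal formal deformation of an abelian surface is the standard counterexample); \cite[Theorem 6.14]{GIT} only provides the group structure on the infinitesimal liftings, not an algebraization. Second, your concrete substitute does not close the gap: the lifted projection of \proref{bielliptic-obstructions} exhibits $\mdl A$ only as an \emph{extension} $0 \to \mdl E \to \mdl A \to \mdl F \to 0$ of formal elliptic curves, not as a product, and there is no ``polarization built from those of the factors'' on a mere extension. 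What one actually needs is a compatible system of ample line bundles through \emph{all} infinitesimal levels; splitting the extension up to isogeny level by level is not enough, since the degree of the splitting isogeny can grow with the level (the kernel of $\Ext^1_{R_n}(\mdl F, \mdl E) \to \Ext^1_k(F,E)$ picks up $p$-torsion at each small extension), so the resulting ample classes need not be compatible. Your argument is therefore incomplete precisely where the analytic difficulty sits; the descent step at the end (invariant power of an ample sheaf, linearization obstruction killed by a power --- or more simply the norm along the \'etale quotient) is fine but rests on the missing algebraization.

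The gap is fixable with an ingredient already in the paper, and it is worth seeing how: the trace map $\tr_\Omega$ of \proref{bielliptic-obstructions} kills $\mdl E$ (by \lemref{trace-map}) and induces $[d]$ on $\mdl F$, so it factors as $\tr_\Omega = \tilde s \circ q$ with $\tilde s \colon \mdl F \to \mdl A$ satisfying $q \circ \tilde s = [d]$; then $[d]_{\mdl A} - \tilde s \circ q$ factors through $\mdl E$, giving $\pi \colon \mdl A \to \mdl E$, and $(\pi, q) \colon \mdl A \to \mdl E \times \mdl F$ is an isogeny of degree bounded independently of the level. Pulling back the product of the (unobstructed, since $H^2$ of a curve vanishes) formal polarizations of $\mdl E$ and $\mdl F$ along this fixed isogeny yields the compatible formal ample sheaf your proof needs. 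Note that the paper avoids all of this by working directly on $\mdl X^{vers}$: it shows the fixed locus of the descended automorphism $\Omega$ is a relative Cartier divisor $\mdl Z$, flat over $\mdl C$ because \'etale-locally it is the kernel of the flat surjection $\Omega - \Id$ of elliptic schemes (\cite[Lemma 6.12]{GIT}), of positive fibre degree and hence relatively ample; twisting by a fibre of $\mdl X^{vers} \to \mdl C$ then gives the ample system, with no algebraization of $\mdl A$ required.
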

\begin{proof}
For an arbitrary deformation $\mdl X$ of $X$, denote by $\mdl A \to \mdl X$ the unique lifting of the abelian covering $A \to X$. 
In the proof of \proref{bielliptic-obstructions}
we saw that the abelian scheme $\mdl A$ has an automorphism $\Omega$ lifting the automorphism $\omega \times \Id$ of $E \times F$.

The automorphism of $\mdl X$, obtained from $\Omega$ by descent, will again be denoted by $\Omega$.
Now $\Omega$ is a $\mdl C$-automorphism of $\mdl X$, i.e, its action is confined to the fibres of the fibration.

We claim that the fixed locus of $\Omega$ is flat over $\mdl C$:
Every closed point $x \in \mdl C$ has an \'etale neighborhood $\mdl U \to \mdl C$ such that the pullback 
\[ \mdl X_{\mdl U} = \mdl X \times_{\mdl C} \mdl U \]
can be given the structure of an abelian scheme, in such a way that the base change of $\Omega$ to $\mdl X_{\mdl U}$ becomes a group automorphism.
We consider the endomorphism $\Omega - \Id$ of $\mdl X_{\mdl U}$. It is a surjective map of abelian schemes and therefore flat by (\cite[Lemma 6.12]{GIT}). In particular its kernel, i.e the fix locus of $\Omega$, is flat over $\mdl U$.

Thus we have found a relative Cartier divisor of $\mdl X \to \mdl C$.  Its degree can be computed on the reduction.
It equals the order of the subgroup scheme of $E$ fixed by $\omega$. In particular it is positive, which means that $\mdl Z$ is a relatively ample divisor for $\mdl X \to \mdl C$.

Now denote by $\mdl X^{vers} \to \Spf(R)$ a versal family of $\Fib_{X/C}$. It is a formal scheme over the hull of the deformation functor $\Fib_{X/C}$, and we conclude form \thmref{thm-bielliptic-fib-lift} that it admits an elliptic fibration $F \colon \mdl X^{vers} \to \mdl C$ lifting $X \to C$.

Denote by $\ideal m$ the maximal ideal of $R$ , and set $X_n = \mdl X^{vers} \otimes_R R/\ideal m^{n+1}$.
The construction of $\mdl Z$ gives rise to a compatible system of relatively ample invertible sheaves $\O_{X_n}(Z_n)$.
Tensoring with the invertible sheaf coming from the divisor of a fibre of $\mdl X^{vers} \to \mdl C$, we obtain a system of ample line bundles $\sheaf H_n$.
Thus by Grothendieck's Algebraization Theorem \cite[Theorem 4.10]{Ill1}, we conclude that $\mdl X^{vers}$ is the completion of some projective scheme $\overline { \mdl X^{vers}}$ over $\Spec(R)$.
\end{proof}

The above proposition helps us to answer another natural question: $X$ is called bielliptic because it has two transversal elliptic fibrations.
Namely the smooth one, denoted by $f$, coming from the projection $E \times F \to F$ and a second one, denoted by $g$, with base curve $\P^1_k$, coming from $E \times F \to E$.  We saw that the first fibration is preserved under deformation, but what about the second one?

\begin{prop}\label{bielliptic-deformations}
 Let $X$ be a bielliptic fibration. Then every deformation $\mdl X$ of $X$ extends both elliptic fibration.
\end{prop}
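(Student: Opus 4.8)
The plan is to deduce the proposition in two stages: reuse the already-established extension of the smooth Albanese fibration, and then extend the transversal fibration $g\colon X \to E/\langle\omega\rangle \simeq \P^1_k$ by lifting the \emph{complementary} projection on the abelian cover, exactly mirroring the argument of \proref{bielliptic-obstructions}. By \thmref{thm-bielliptic-fib-lift} the smooth Albanese fibration $f\colon X \to C$ extends to every deformation $\mdl X$, so only $g$ requires attention. I would first dispose of the case where $d$ is not a power of $p$: there $\Jac_{X/C}$ is unobstructed and $\Jac_{X/C} \simeq \Fib_{X/C} \simeq \Def_X$, so by \proref{prop-global-structure} every deformation is a product quotient $(\mdl E \times \mdl F)/G$, whence the projection $\mdl E \times \mdl F \to \mdl E$ descends to $\mdl X \to \mdl E/\langle\omega\rangle \simeq \P^1_R$ and both fibrations extend simultaneously. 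This reduces matters to $d = p^n$.

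For $d = p^n$ I would run the abelian-cover machinery on the projection $\pr_E$. As in \proref{bielliptic-obstructions}, a deformation $\mdl X$ yields an abelian scheme $\mdl A$ covering it, carrying the group automorphism $\Omega$ lifting $\omega \times \Id$ on $A = E \times F$, with the $G$-action generated by $\sigma = t_c \circ \Omega$ for a $d$-torsion point $c$ in the $\mdl F$-direction. The key point is that the endomorphism $\tau = [d] - \tr_\Omega$ of $\mdl A$ lifts $[d]\circ\pr_E$: by \lemref{trace-map} the trace $\tr_\Omega$ is multiplication by $d$ on the $F$-factor and zero on the $E$-factor, so $\tau$ is zero on $F$ and multiplication by $d$ on $E$. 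Since \proref{bielliptic-obstructions} already shows $\tr_\Omega(\mdl A[d]) = 0$, the vanishing $\tau(\mdl A[d]) = [d]\mdl A[d] - \tr_\Omega(\mdl A[d]) = 0$ comes for free. Passing to $p$-divisible groups via \thmref{Serre-Tate} and comparing $\tau$ with the unique lift of $[N]\circ\pr_E$ furnished by \lemref{pdiv-endos}, this vanishing lets $\pr_E$ factor through $[d]$, so $\pr_E$ lifts to a homomorphism $\mdl A \to \mdl E$ onto an elliptic scheme $\mdl E := \mathrm{im}(\tau)$ lifting $E$.

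It then remains to take the ramified quotient and descend. Since $\tau$ and $\Omega$ commute, $\Omega$ preserves $\mdl E$ and restricts there to an automorphism of order $d$ lifting $\omega$; I would form $\mdl P = \mdl E/\langle\Omega\rangle$, which is proper and flat over $R$ with special fibre $E/\langle\omega\rangle \simeq \P^1_k$, and identify it with $\P^1_R$ using the rigidity of $\P^1$ (no infinitesimal deformations, no obstructions). Finally the composite $\mdl A \xrightarrow{\pr_E} \mdl E \to \mdl P$ is $G$-invariant: $\pr_E$ kills the translation $c$ because $c \in \ker\pr_E$, and $\pr_E$ intertwines $\Omega$ on $\mdl A$ with $\Omega|_{\mdl E}$, which becomes trivial after passing to $\mdl E/\langle\Omega\rangle$. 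Hence the map descends to $g_\Lambda \colon \mdl X = \mdl A/G \to \mdl P \simeq \P^1_R$, reducing to $g$ on the special fibre, so $g$ extends. The non-Jacobian cases, including the exceptional characteristic-two surface, I would treat by the same abelian-cover method—their covers are again abelian surfaces—or by reducing to the Jacobian case through the formally smooth map of \thmref{def-functor-mapping}.

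The main obstacle I anticipate is the ramified-quotient step. For the Albanese fibration the base arose from a \emph{free} translation action, giving an \'etale quotient, whereas here the base $\P^1$ comes from the \emph{ramified} action of $\omega$ on $\mdl E$, producing the multiple fibres of $g$ over the branch points. Controlling $\mdl E/\langle\Omega\rangle$—checking flatness over $R$ and identifying it with $\P^1_R$ in the presence of these multiple fibres—is the genuinely new point; by contrast the lifting of $\pr_E$ is cheap once \proref{bielliptic-obstructions} is available, precisely because $\tau(\mdl A[d])$ vanishes automatically.
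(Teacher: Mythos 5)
Your reduction to the second fibration $g$ and the construction of the endomorphism $\tau = [d] - \tr_\Omega$ are sound as far as they go: the proof of \proref{bielliptic-obstructions} does establish $\tr_\Omega(\mdl A[d]) = 0$, so $\tau$ kills $\mdl A[d]$ and the projection onto the $E$-factor lifts to a homomorphism $\psi \colon \mdl A \to \mdl E$. But the argument then stalls at precisely the step you yourself flag as open, and that step is not a routine verification --- it is the whole difficulty. Whenever $p \mid d$ (so in all the obstructed cases $d = p^n$, and even in your ``easy'' case $d = 6$ with $p \in \{2,3\}$), the automorphism $\omega$ acts on $E$ with \emph{wild} ramification at its fixed points. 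Taking invariants under $\langle \Omega \rangle$ then fails to commute with the base change $R \to k$ in general: there is no Reynolds operator, and the relevant group-cohomology obstructions need not vanish. Consequently neither the flatness of $\mdl E/\langle\Omega\rangle$ over $R$ nor the identification of its special fibre with $E/\langle\omega\rangle \simeq \P^1_k$ is available, and rigidity of $\P^1$ cannot be invoked before both are known. Lifting wild quotients of curves is exactly the class of problem behind the Oort conjecture (compare \thmref{thm-oort-conj}), so any route that must produce the base of $g$ as a quotient of $\mdl E$ carries the core difficulty of the proposition, unresolved. There is also a smaller gap in your descent step: invariance of $\mdl A \xrightarrow{\psi} \mdl E \to \mdl E/\langle\Omega\rangle$ under $\sigma = t_c \circ \Omega$ requires $\psi(c) = 0$, not merely that $\pr_E$ kills the reduction of $c$; since $\psi \circ t_c = t_{\psi(c)} \circ \psi$ and $\psi(c)$ is a $d$-torsion section reducing to zero, it could a priori be a nontrivial section of the connected part of $\mdl E[p^n]$ (such infinitesimal torsion sections exist, e.g.\ in $\mu_{p^n}(\Lambda)$), and translation by it does not die in the quotient.

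The paper circumvents all of this by never constructing the $\P^1$-base as a quotient. It works with the versal family $\mdl X^{vers} \to \Spf(R)$, algebraizes it to a projective scheme $\overline{\mdl X}$ over $R$ (\proref{algebraization}, using the relatively ample divisor $\mdl Z$ cut out by $\Fix(\Omega)$), and applies surface theory to the \emph{generic} fibre over $K = \Frac(R)$: a power $\sheaf L^{\otimes m}$ of $\sheaf L = \O_{\overline{\mdl X}}(\mdl Z)$ defines an elliptic fibration $g' \colon \overline{\mdl X}_K \to \P^1_K$. This extends to a rational map over $R$; pulling back two disjoint sections of $\P^1_R$ gives two divisors whose reductions are shown to be irreducible and disjoint by the numerical structure of $\NS(X)$ (fibre classes $F$, $G$ with $F^2 = G^2 = 0$, $F \cdot G > 0$, no effective divisors of negative self-intersection, so specializations of curves of canonical type are irreducible of canonical type). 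The corresponding sections show $\sheaf L^{\otimes m}$ is globally generated, so $g'$ is a morphism lifting $g$, and versality propagates the conclusion to every deformation. In short, the base of $g$ is reconstructed from sections of a line bundle rather than as $\mdl E/\langle\Omega\rangle$, which is what makes the wild cases tractable; your proposal, by contrast, leaves its decisive step open.
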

\begin{proof}
We are going to show that the versal deformation $\mdl X^{vers} \to \Spf(R)$ admits an extension of $g$. Then the claim follows by versality.

Denote by $K$ the fraction field of $R$.
We can use surface theory to analyze the generic fibre $\overline{\mdl X}_K$ of the algebraization $\overline{\mdl X}$ of $\mdl X^{vers}$. 
Denote by $\sheaf L = \O_{\overline{\mdl X}}(\mdl Z)$ the line bundle associated to the divisor $\mdl Z$, constructed in the proof of \proref{algebraization}.
The canonical bundle of $\overline{\mdl X}_K$ has self-intersection number 0.
It follows that the line bundle $\sheaf L_K^{\otimes m}$, gives rise to an elliptic fibration $g' \colon \overline{\mdl X}_K \to \P^1_K$, if we choose $m$ sufficiently big \cite[Theorem 7.11]{Bad}.

Since $\overline{\mdl X}$ is proper and normal, we can extend $g'$ to a rational map $g' \colon \overline{\mdl X} \to \P^1_R$ which is defined on a non-empty open subset intersecting the special fibre.
Now, there are sections $s_1, s_2 \colon \Spec(R) \to \P^1_R$ whose associated closed subschemes are disjoint and who do lie inside the image of $g'$.
Taking the closures of the inverse images of those sections under $g'$, we get two divisors $\mdl G_1$ and $\mdl G_2$ in $\overline{\mdl X}$ who have disjoint specializations on a non empty open subset of the special fibre (namely the locus where $g'$ is defined).

We claim that their reductions $G_1$ and $G_2$ are irreducible (and hence disjoint):
The group of divisors of $X$ modulo numerical equivalence is generated by two classes $F$ and $G$, where $F$ is a fibre class of $f$ and $G$ is one of $g$. The intersection numbers are
\begin{align*}
 F \cdot F = 0, && F \cdot G > 0, && G \cdot G = 0.
\end{align*}
In particular, there are no effective divisors on $X$ with negative self-intersection.
It follows that the specialization of a curve of canonical type is again of canonical type. However, every curve of canonical type on $X$ is irreducible, hence the claim follows.

Considering the global sections of $\sheaf L^{\otimes m}$ associated to the effective divisors $\mdl G_1$ and $\mdl G_2$, we find that $\sheaf L^{\otimes m}$ is globally generated. It follows that the map given by $\sheaf L^{\otimes m}$ is in fact a morphism, lifting $g$.
\end{proof}

We illustrate the theorem by looking at a special case:
Let $k$ be of characteristic three, and let $X$ denote the Jacobian bielliptic surface of index $d = 3$ over $k$.
What does the fibre $\mdl X_\eta$ of the versal family of $X$ over the generic point of the base look like?
The smooth fibration with elliptic base curve does not have a section. The three sections which appear when we do base change with the algebraic closure of $\eta$ do not descend to $\mdl X_\eta$. Instead, we have a multi-section of degree three.

There is an explicit construction of a bielliptic surface with $d = 3$ over $\Q$, which shows the same behaviour. It was given in \cite{MR2053453} as a counterexamples to the Hasse principle which cannot be explained by the Manin obstruction.

\bibliography{RefSmooth}

\end{document}